\documentclass[12pt]{amsart} 
\usepackage{verbatim, latexsym, amssymb, amsmath,color, mathrsfs}
\usepackage{enumitem}
\usepackage{epsfig}
\usepackage{euscript}
\usepackage{stmaryrd}
\usepackage{dsfont}

\usepackage[dvipsnames]{xcolor}

\usepackage{tikz, pgfplots}
\usetikzlibrary{arrows.meta,positioning}
\usetikzlibrary{positioning}
\usetikzlibrary{matrix,arrows.meta}

\makeatletter
\renewcommand{\tocsection}[3]{%
  \indentlabel{\@ifnotempty{#2}{\bfseries\ignorespaces#1 #2\quad}}\bfseries#3}
\renewcommand{\tocsubsection}[3]{%
  \indentlabel{\@ifnotempty{#2}{\ignorespaces#1 #2\quad}}#3}

\newcommand\@dotsep{4.5}
\def\@tocline#1#2#3#4#5#6#7{\relax
  \ifnum #1>\c@tocdepth 
  \else
    \par \addpenalty\@secpenalty\addvspace{#2}%
    \begingroup \hyphenpenalty\@M
    \@ifempty{#4}{%
      \@tempdima\csname r@tocindent\number#1\endcsname\relax
    }{%
      \@tempdima#4\relax
    }%
    \parindent\z@ \leftskip#3\relax \advance\leftskip\@tempdima\relax
    \rightskip\@pnumwidth plus1em \parfillskip-\@pnumwidth
    #5\leavevmode\hskip-\@tempdima{#6}\nobreak
    \leaders\hbox{$\m@th\mkern \@dotsep mu\hbox{.}\mkern \@dotsep mu$}\hfill
    \nobreak
    \hbox to\@pnumwidth{\@tocpagenum{\ifnum#1=1\bfseries\fi#7}}\par
    \nobreak
    \endgroup
  \fi}
\AtBeginDocument{%
\expandafter\renewcommand\csname r@tocindent0\endcsname{0pt}
}
\def\l@subsection{\@tocline{2}{0pt}{2.5pc}{5pc}{}}
\makeatother

\usepackage{geometry}
\usepackage{hyperref}

\DeclareMathOperator{\Ric}{Ric}

\DeclareMathOperator{\Hess}{Hess}

\DeclareMathOperator{\Id}{Id}

\DeclareMathOperator{\Tr}{Tr}

\DeclareMathOperator{\dvol}{dvol}
\DeclareMathOperator{\Cov}{Cov}
\DeclareMathOperator{\Var}{Var}
\DeclareMathOperator{\supp}{supp}
\DeclareMathOperator{\conv}{conv}

\DeclareMathOperator{\R}{\mathbb{R}}

\DeclareMathOperator{\E}{\textbf{E}}

\newtheorem{thm}{Theorem}[section]

\newtheorem{lem}[thm]{Lemma}

\newtheorem{cor}[thm]{Corollary}

\newtheorem{prob}[thm]{Problem}

\theoremstyle{defn}
\newtheorem{rem}[thm]{Remark}

\begin{document}
\title{Sum of Gaussian vectors and large sets}

\author{Antoine Song}
\address{California Institute of Technology\\ 177 Linde Hall, \#1200 E. California Blvd., Pasadena, CA 91125}
\email{aysong@caltech.edu}

\maketitle

\begin{abstract} 
We prove that the convexity problem of M. Talagrand is equivalent to the subgaussian vector problem: can any centered $1$-subgaussian random vector in $\mathbb{R}^n$ be realized as the sum of a universal number of standard Gaussian vectors?
We introduce methods to study this problem and, using elementary arguments, we settle it for $1$-subgaussian random variables and random vectors with good norm and covariance bounds.
These results already confirm the permutation invariant case of the convexity problem, and give optimal estimates on the largest ellipsoid contained in a sum of large sets in Gaussian spaces.
We also propose a Riemannian version of the convexity problem for spaces with nonnegative Ricci curvature.

\end{abstract}

\section*{Introduction}

One of the questions motivating this paper is the convexity problem of M. Talagrand \cite[Problem 2.3]{Tal95} \cite[Conjecture 2.1]{Tal10} \cite{Tal26} \cite[Problem 54]{Green24} stated below, where $+$ denotes the Minkowski sum and  $\gamma_n$ is the standard Gaussian measure on $\R^n$: 
\begin{prob}[Convexity problem \cite{Tal95,Tal10}] \label{T}
Does there exist a positive integer $q$ such that for any $n\geq 1$ and any closed set $A$ in $\R^n$ with $\gamma_n(A)\geq \frac{2}{3}$, there is a convex body $K$  in $\R^n$ such that
$$\gamma_n(K)\geq \frac{1}{2}\quad \text{and}\quad  K\subset \underbrace{\vphantom{\Big|}A+\cdots+A}_{q\ \text{times}}\quad ?$$
\end{prob}

Consider now an apparently unrelated question.
A standard Gaussian random vector is a random vector in $\R^n$ with probability distribution $\gamma_n$. Given $\kappa>0$, a random vector $Y$ in $\R^n$ is called $\kappa$-subgaussian \cite[Definition 3.4.1, Proposition 2.6.1]{Ver18}
if for any unit vector $v$, we have $\mathbb{P}[|\langle Y,v\rangle |\geq t] \leq 2\exp(-\frac{t^2}{2\kappa^2})$.

\begin{prob}[Subgaussian vector problem]\label{S}
Does there exist an  integer $q>0$ such that for any $n\geq1 $ and any centered $1$-subgaussian random vector $X$ in $\R^n$, there are standard Gaussian random vectors $G_1,...,G_q$  in $\R^n$ with
$$X  =G_1+...+G_q\quad ?$$
\end{prob}
Given standard Gaussian random vectors $G_1,...,G_q$ (independent or not), it is well-known that the sum $G_1+...+G_q$ is centered and $O(q)$-subgaussian \cite[Theorem 1.2]{Bul00} \cite[Exercise 2.42]{Ver18}. Problem \ref{S} asks whether a converse holds. 
It turns out that this problem is crucial for understanding the convexity problem: we will show in Theorem \ref{thm:equivalence} the following:
\begin{thm} \label{thm:equiivv}
$$\text{Problem \ref{T}} \Leftrightarrow \text{Problem \ref{S}}
$$
\end{thm}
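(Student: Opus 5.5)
The plan is to prove both implications by convex duality, with the Gaussian measure $\gamma_n$ linking "large sets" to "couplings". A preliminary step, which I would do first, is a robustness lemma: in Problem \ref{T} the thresholds $\frac23$ and $\frac12$ can be replaced by any constants $1-\varepsilon$ and $\delta$ in $(0,1)$, at the cost of changing $q$. For the lower threshold $\frac12$ I would enlarge $K$ by taking Minkowski sums $K+\dots+K$ and using the Gaussian isoperimetric inequality. For the upper threshold, note that $A+A+\dots$ contains sets of measure close to $1$ once a convex $K\subset qA$ of measure $\frac12$ is found, since $K+K\supset 2K$. Similarly, in Problem \ref{S}, $1$-subgaussian can be traded for $\kappa$-subgaussian by rescaling and adjusting $q$, because a sum of $q$ standard Gaussians can be cut into blocks.

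\textbf{(S) $\Rightarrow$ (T).} Let $A$ with $\gamma_n(A)\ge 1-\varepsilon$ be given, and suppose no convex body of measure $\ge\frac12$ lies inside $B:=qA$. I would use a Hahn--Banach / minimax argument over convex bodies and half-spaces to produce a centered probability measure $\mu$ with two properties. First, $\mu$ is $O(1)$-subgaussian; the subgaussian control should come from the tail condition encoded by half-spaces of Gaussian measure close to $1$. Second, $\mu$ is concentrated on the complement $B^c$. Concretely, the failure of every candidate $K$ defined by intersecting half-spaces $\{\langle x,v\rangle\le t_v\}$ gives witnesses in $B^c$ in all directions, and averaging these witnesses gives $\mu$. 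Applying Problem \ref{S} to $X\sim\mu$ (after rescaling) yields $X=G_1+\dots+G_q$. A union bound gives $\mathbb P[\text{all } G_i\in A]\ge 1-q\varepsilon>0$ once $\varepsilon<1/q$, so $X\in qA=B$ with positive probability. This contradicts $\mu(B^c)=1$.

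\textbf{(T) $\Rightarrow$ (S).} Let $\mathcal M_q$ be the set of laws of $G_1+\dots+G_q$ over all couplings of standard Gaussian vectors. It is convex, being closed under mixtures of couplings, and it is weakly compact by tightness. If the law of $X$ is not in $\mathcal M_q$, Hahn--Banach gives a bounded continuous $f$ with
\[
\mathbb E f(X)>\sup_{\text{couplings}}\mathbb E f(G_1+\dots+G_q).
\]
The right-hand side is the multi-marginal transport value. By Kantorovich duality it equals $\inf\{\,q\int\varphi\,d\gamma_n : \varphi(x_1)+\dots+\varphi(x_q)\ge f(x_1+\dots+x_q)\,\}$, where symmetrization allows a single $\varphi$. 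I would then apply (T) to the superlevel sets $A_s=\{\varphi\le s\}$, which have large Gaussian measure for suitable $s$. This gives convex $K_s\subset qA_s$ on which $f\le qs$, hence $f$ is small on a nested family of convex bodies of Gaussian measure $\ge\frac12$. A layer-cake decomposition in $s$ should then bound $\mathbb E f(X)$ by the contributions of the complements of the $K_s$. Each complement is covered by a half-space of Gaussian measure bounded below, and since $X$ is centered and $1$-subgaussian, $\mathbb P[X\notin K_s]$ is controlled by Gaussian tails. This contradicts the strict inequality.

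The main obstacle is this last step. A single convex body $K$ of measure $\frac12$ does not directly control $\mathbb P[X\notin K]$, because a centered subgaussian vector can put mass outside a convex set of Gaussian measure $\frac12$. So I expect to need the threshold-robustness lemma in a strong form: convex bodies inside $q'A$ of Gaussian measure $1-\eta$ for small $\eta$, together with a half-space covering argument in which subgaussianity beats Gaussian measure up to constants. Getting these constants to close, uniformly in $n$, is where the real work lies.
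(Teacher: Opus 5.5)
\textbf{(S)$\Rightarrow$(T).} This has the same architecture as the paper's proof of (iii)$\Rightarrow$(i) in Theorem \ref{thm:equivalence}: produce a subgaussian law supported off the sum set, then apply a union bound. The duality step is Theorem \ref{dgln}, and your half-space version of it does not work as written. Weighting indicators of half-spaces $\{\langle x,v\rangle\ge t\}$ gives non-convex sublevel sets, and averaging witnesses gives no simultaneous control in all directions. The clean minimax pits probability measures $\mu$ on $B^c\cap\overline{B(0,R)}$ against probability measures $\nu$ on $S^{n-1}$, with payoff $\iint e^{\langle x,\theta\rangle^2/4}\,d\mu(x)\,d\nu(\theta)$. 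If the value exceeds $2$, some $\nu$ gives the symmetric convex set $\{x:\int e^{\langle x,\theta\rangle^2/4}\,d\nu(\theta)\le 2\}$. This set misses $B^c\cap\overline{B(0,R)}$ and has Gaussian measure at least $2-\sqrt2>\frac12$ by Markov. Otherwise the optimal $\mu$ is $O(1)$-subgaussian. Three smaller points:
\begin{itemize}
\item Centering needs the reduction to symmetric $A$ (the paper uses $A_{(c_2)}\cap(-A_{(c_2)})$) and then symmetrizing $\mu$.
\item Boosting $\gamma_n(A)\ge\frac23$ to $\gamma_n(A_{(k_1)})\ge 1-\frac1{10q}$ needs Lemmas \ref{Steinhaus} and \ref{cor:vol sum}. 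Your ``$K+K\supset 2K$'' presupposes the very body you are constructing.
\item Write $A_{(q)}$, not $qA$, since the union bound produces Minkowski sums, not dilations.
\end{itemize}

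\textbf{(T)$\Rightarrow$(S): the genuine gap.} The problem is not one of tuning constants. The Hahn--Banach gap can be arbitrarily small, so you need the \emph{exact} inequality $\E\,\varphi^{\square q}(X)\le q\int\varphi\,d\gamma_n$, where $\varphi^{\square q}(y):=\inf\{\sum_{i\le q}\varphi(x_i):\ \sum_{i\le q}x_i=y\}$. Everything available is lossy.
\begin{itemize}
\item (T) applies only to $\{\varphi\le s\}$ with $s$ above the $\frac23$-quantile of $\varphi$ under $\gamma_n$. That quantile can exceed $\int\varphi\,d\gamma_n$, so the bound $f\le qs$ on $K_s$ is already too weak.
\item Pushing $\gamma_n(K)$ towards $1$ costs a number of summands that grows as $\gamma_n(K)\to1$.
\item The available control of $\mathbb{P}[X\notin K]$ for a convex body is Theorem \ref{tal}, which only gives probability $\frac12$, and only after dividing $X$ by $p'$.
\end{itemize}
Also, the complement of a convex body is never contained in one half-space. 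Covering it by many half-spaces with summable subgaussian tails is exactly the deep content of Theorem \ref{tal}. No layer-cake summation turns such bounds into an exact inequality.

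\textbf{The missing idea: tensorization.} The paper's trick works because one point in high dimension encodes a whole law, so constant-probability estimates suffice.
\begin{itemize}
\item In $\R^{nM}$, let $\mathfrak G_{n,M}$ be the set of $(x_1,\dots,x_M)$ whose empirical measure is $W_1$-close to $\lambda G$ for some $\lambda\in[-1,1]$. By Glivenko--Cantelli it has $\gamma_{nM}$-measure at least $\frac23$.
\item (T), with Lemmas \ref{Steinhaus} and \ref{cor:vol sum}, gives a convex $K\subset(\mathfrak G_{n,M})_{(c_1q)}$ with $\gamma_{nM}(K)\ge\frac9{10}$.
\item Apply Theorem \ref{tal} to $\hat X=(X_1,\dots,X_M)$, i.i.d.\ copies of $X$, which is still $1$-subgaussian. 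Together with Glivenko--Cantelli, this gives a point of $K$ whose empirical law is close to that of $X/p'$.
\item Write that point as a sum of $c_1q$ points of $\mathfrak G_{n,M}$ and read off a uniformly random coordinate. This gives a coupling, so letting $M\to\infty$ and $\epsilon\to0$ yields $X/p'=\sum_j\lambda_jG_j$. Corollary \ref{cor:rescale} finishes.
\end{itemize}
The same trick would also rescue your dual framework. Apply (T) in $\R^{nM}$ to $\{x:\frac1M\sum_i\varphi(x_i)\le\int\varphi\,d\gamma_n+\epsilon\}$, and the law of large numbers to $\varphi^{\square c_1q}(X_i/p')$. One can check this yields exactly the required inequality for $X/p'$ against sums of $c_1q$ Gaussians.
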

This means that a positive answer to one problem implies a positive answer to the other. 
A positive answer would be striking  from both the geometric viewpoint and the probabilistic viewpoint. 
Regardless of the answer, general statements about sums of Gaussian random  vectors are of special interest since,  in principle, they have implications for sums of general centered random vectors with finite second moments via the central limit theorem.
In this paper, we develop new methods for studying \emph{sums of non-independent Gaussian random vectors}, with several geometric consequences. 

\textcolor{RedViolet}{\textbf{Added May 2026:} The first version of this paper appeared in February 2026. In a subsequent paper \cite{HST26} written jointly with Dongming (Merrick) Hua and Stefan Tudose, we solve both problems above. 
Although the results described below in this introduction are now formally subsumed by \cite{HST26}, the present paper played a key role in the genesis of \cite{HST26} both on a psychological level and a technical level. 
Some of the discussions, questions and proofs in this paper remain of interest:
\begin{itemize}
\item The equivalence between Problem \ref{T} and Problem \ref{S} is  a useful translation.
\item In Problem \ref{GC}, we propose a Riemannian version of the convexity problem.
\item The proofs of Theorem \ref{thm:1d}, Theorem \ref{thm:general}, Corollary \ref{cor:permutation}, Corollary \ref{cor:large ellipsoid} are more concrete than their derivation from \cite{HST26}. 
\item  In Subsection \ref{rem:un}, we outline a quick way to get van Handel's strengthening of Talagrand's subgaussian comparison theorem \cite[Corollary 1.2]{Van25}  from Talagrand's classical result \cite[Theorem
2.10.11]{Tal21} and a tensoring trick,  by showing that $1$-subgaussian vectors essentially form the convex hull of standard Gaussian vectors up to a universal scaling. 
\item Lemma \ref{corrrect} gives a sharp bound on the empirical distribution of a sequence of independent random variables whose average distribution is Gaussian, and corrects a gap in \cite{Tal95}. 
\end{itemize}
All the main new ideas in this paper were generated the old-fashioned way, using biological neurons.
}

\subsection*{Sums of Gaussian vectors}
In dimension $1$, M. Talagrand conjectured in \cite[Conjecture 2.7]{Tal95} a statement stronger than Problem \ref{S}: summing three Gaussian random variables should be enough to obtain any sufficiently subgaussian variable. Our first result confirms this conjecture:

\begin{thm} [Three Gaussians]\label{thm:1d}
There is a universal constant  $\kappa>0$ such that given any centered real-valued $\kappa$-subgaussian  random variable $X$, there are three standard Gaussian random variables $G_1,G_2,G_3$ with
$$X=G_1+G_2+G_3.$$
\end{thm}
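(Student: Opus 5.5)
The plan is to realize $X$ as a mixture. Conditionally on $X=x$, the triple $(G_1,G_2,G_3)$ will be a coupling of three laws $\nu_x$ whose sum is the constant $x$. The laws $\nu_x$ are chosen so that their average over the law $\mu$ of $X$ is exactly $\gamma_1$. This makes each $G_i$ unconditionally standard Gaussian, and $G_1+G_2+G_3=X$ holds pointwise.

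\emph{Step 1 (choice of the fibre laws).} Fix a bounded smooth function $h$ with $\int t\,h(t)\,d\gamma_1(t)=\tfrac13$ and $\int h\,d\gamma_1=0$. For $|x|\le c$, with $c$ small depending only on $\|h\|_\infty$, set $\nu_x:=(1+x h(t))\,\gamma_1(dt)$. This is a probability measure with mean $x/3$. Since $\mathbb{E}X=0$, we get
$$\int \nu_x\, d\mu(x)=\gamma_1+\Big(\int x\,d\mu(x)\Big) h\,\gamma_1=\gamma_1 .$$
The normalization $\int h\,d\gamma_1=0$ is what makes $\nu_x$ a probability measure. The Gaussian mixing is therefore automatic, and only the tails of $X$ obstruct this step.

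\emph{Step 2 (tails).} Since $X$ is $\kappa$-subgaussian with $\kappa$ tiny, the event $|X|>c$ has probability at most $2e^{-c^2/2\kappa^2}$. To handle it, I would replace the linear perturbation by $\nu_x=(1+\phi_x(t))\gamma_1$, where $\phi_x$ depends on $x$ nonlinearly for large $|x|$. One natural choice is a tilted density $e^{\lambda t-\lambda^2/2}$ with $\lambda\approx x/3$, mixed against a small correction. The correction is chosen so that $\int\phi_x\,d\mu(x)=0$ and $\int t\,\phi_x\,d\gamma_1=x/3$ still hold. Subgaussianity of $X$ with small $\kappa$ should give enough room: the total signed mass needed to rebalance the tilts is exponentially small, so it can be absorbed into the bounded part $|x|\le c$ while keeping all densities positive.

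\emph{Step 3 (the main obstacle: constant-sum couplings).} For each $x$ we need random variables $Y_1,Y_2,Y_3\sim\nu_x$ with $Y_1+Y_2+Y_3=x$ almost surely. For $x=0$ and $\nu_0=\gamma_1$ this exists: take $(Y_1,Y_2,Y_3)$ centered Gaussian with covariance $\tfrac32(I-\tfrac13 J)$, where $J$ is the all-ones matrix. This is a nondegenerate Gaussian on the plane $\{y_1+y_2+y_3=0\}$ with standard marginals.

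For $\nu_x$ a small perturbation of $\gamma_1$ with mean $x/3$, I would perturb this plane Gaussian. Write the joint law as a density $(1+\psi)$ relative to it on the plane $\{y_1+y_2+y_3=x\}$ (after translating by $x/3$). Prescribing the three marginals is a linear condition on $\psi$. The three marginal constraints are compatible, because the means add up to $x$. I would solve them explicitly with $\psi(y)=a(y_1)+a(y_2)+a(y_3)$, which reduces to a one-dimensional convolution-type equation for $a$. That equation is invertible on smooth functions because the Gaussian on the plane has nondegenerate two-dimensional marginals, and smallness of $x$ (or of the tilt) keeps $1+\psi\ge0$.

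For the large-$|x|$ fibres, where $\nu_x$ is close to a shifted Gaussian, the same construction applies after translating each $Y_i$ by $x/3$. I expect positivity of $1+\psi$, uniformly in $x$, to be the hardest point, and this is precisely where the universal smallness of $\kappa$ enters. Finally, I would assemble the fibres by measurable selection in $x$ on an enlarged probability space, which gives $G_i\sim\gamma_1$ and $X=G_1+G_2+G_3$.
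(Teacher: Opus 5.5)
\textbf{Verdict: there is a genuine gap.} The fibrewise reduction is legitimate: if conditionally on $X=x$ you couple three copies of $\nu_x$ with constant sum $x$, and $\int\nu_x\,d\mu(x)=\gamma_1$, you are done. But neither the tail step nor the constant-sum step works as proposed, and the second is the heart of the problem.

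\emph{Step 2 fails as stated.} The constraint $\int\nu_x\,d\mu(x)=\gamma_1$ is a pointwise domination, not a total-mass condition. You need $\int_{|x|>c}\nu_x\,d\mu(x)\le\gamma_1$ as measures, because the difference must be the nonnegative measure $\int_{|x|\le c}\nu_x\,d\mu(x)$. With unit-variance tilts $\nu_x=e^{\lambda t-\lambda^2/2}\gamma_1(dt)=\mathcal N(x/3,1)$, the density of the tail mixture relative to $\gamma_1$ is $\rho(t)=\int_{|x|>c}e^{xt/3-x^2/18}\,d\mu(x)$. As soon as $\mu(\{|x|>c\})>0$, this tends to $+\infty$ as $t\to+\infty$ or as $t\to-\infty$. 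So $1-\rho$ becomes negative far out, and no choice of fibres on $\{|x|\le c\}$ can compensate, however small $\kappa$ is. That the total signed mass is exponentially small is irrelevant. This point alone can be repaired by taking $\mathcal N(x/3,\sigma^2)$ with a fixed $\sigma<1$ for $|x|>c$. Their relative density is at most $\sigma^{-1}e^{x^2/(18(1-\sigma^2))}$, which $\mu$ integrates to something small when $\kappa$ is small. After this repair, however, the whole difficulty sits in Step 3.

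\emph{Step 3 is not proved, and the perturbative sketch does not give it.} For $|x|\le c$ you need $Y_1,Y_2,Y_3\sim\nu_x$ with $Y_1+Y_2+Y_3=x$ almost surely, where $\nu_x$ is a non-Gaussian, non-symmetric law with unbounded support. This is a joint-mixability statement, and nothing in your argument establishes it.
\begin{itemize}
\item Under the plane Gaussian, $(Y_1,Y_2)$ has correlation $-\tfrac12$. So your ansatz gives the equation $a+2Ka=f$, where $K$ is the Mehler operator with correlation $-\tfrac12$.
\item $K$ acts on Hermite polynomials by $(-\tfrac12)^k$, so $I+2K$ has multipliers $1+2(-\tfrac12)^k$. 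It annihilates $H_1$, so it is not invertible. On the remaining modes it is boundedly invertible only in $L^2(\gamma_1)$.
\item Positivity of $1+a(y_1)+a(y_2)+a(y_3)$ needs a pointwise lower bound on $a$. You give no $L^\infty$ estimate, and none comes for free: $\|2K\|_{L^\infty\to L^\infty}=2$, so there is no Neumann series.
\item The reference marginal of the plane Gaussian is $\mathcal N(x/3,1)$. Hence the data involve $\frac{d\nu_x}{d\mathcal N(x/3,1)}(t)=(1+xh(t))e^{-xt/3+x^2/18}$, which is unbounded in $t$ for every $x\neq0$. So $\psi$ is not uniformly small even for tiny $x$, and ``smallness of $x$ keeps $1+\psi\ge0$'' has no basis.
\end{itemize}

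\emph{How the paper avoids this.} It never constructs an exact constant-sum coupling of non-Gaussian laws. It only couples $S$ with one Gaussian $G$: atoms to the right of a well-chosen $y_0\in[-1,1]$ are paired with negative values of $G$, and atoms to the left with positive values. Small $\kappa$ is used exactly to control this, and it makes $S+G$ have density between $C^{-1}\varphi(\cdot-y_0)$ and $C\varphi(\cdot-y_0)$. Two-sided density bounds suffice, with no exact laws needed, because:
\begin{itemize}
\item Bobkov's lemma makes $S+G$ a $C^2$-Lipschitz image of $\gamma_1$;
\item the It\^o-based Lipschitz image lemma turns this into $C^2$ times an average of two standard Gaussians;
\item the normalization lemma rewrites the three resulting scaled Gaussians as three standard ones.
\end{itemize}
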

This result is optimal because there are arbitrarily subgaussian random variables which are not a sum of two standard Gaussian random variables \cite[Proposition 2.6]{Tal95} \cite[Subsection 1.2]{LSS22}. Besides by S. Johnston \cite[Theorem 4.2, Corollary 4.4]{Joh25}, for any integer $k>0$, there are weighted averages of standard Gaussian random variables which are not a weighted average of $k$ standard Gaussian random variables.

We now turn to the high-dimensional case of the subgaussian vector problem. 
A central result in our paper shows that for subgaussian random vectors with good bounds on norm and covariance, the answer to Problem \ref{S} is indeed positive:
\begin{thm}[Norm vs Covariance]\label{thm:general} 
There is a positive integer  $q>0$ such that for any dimension $n\geq 1$, any $\Lambda \geq 1$ and any centered random vector $X$ in $\R^n$ with  $$\|X\|\leq \Lambda \text{ almost surely and}\quad \|\Cov X\|\leq {\Lambda^2 }{e^{-\Lambda^2}},$$ there are standard Gaussian random vectors $G_1,...,G_q$ in $\R^n$ with $$X = G_1+...+G_q.$$ 
\end{thm}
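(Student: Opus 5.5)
The plan is to realize $X$ as a difference $X=(X+G)-G$ plus a bounded number of correction terms. The point is that $X+G$, for a suitable coupling $G$ with $G$ standard Gaussian, should be a standard Gaussian vector except on an event of tiny probability. The mismatch on that event would then be absorbed by further Gaussian summands that cancel in pairs.

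\emph{Step 1: density estimate.} I would first take $G$ independent of $X$ and compare the law of $X+G$ with $\gamma_n$. Its density with respect to $\gamma_n$ is
$$f(g)=\mathbf{E}\exp\bigl(\langle X,g\rangle-\tfrac12\|X\|^2\bigr).$$
Since $X$ is centered, Taylor expansion gives $f(g)-1=\mathbf{E}\bigl[\exp(\langle X,g\rangle-\tfrac12\|X\|^2)-1-\langle X,g\rangle\bigr]$. I would bound this in two regimes.
\begin{itemize}
\item On $\{\|g\|\lesssim \Lambda\}$ the integrand is controlled by $e^{C\Lambda^2}\mathbf{E}\langle X,g\rangle^2+\mathbf{E}\|X\|^2$-type quantities. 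These are bounded by $\|\Cov X\|$ in the direction of $g$, and the $\|X\|^2$ term must also be handled.
\item The tail region is controlled by the Gaussian weight $e^{-\|g\|^2/2}$ against $e^{\Lambda\|g\|}$, which is where $\|X\|\leq\Lambda$ enters.
\end{itemize}
The goal is to show $|f-1|\leq \tfrac12$ outside a set of tiny Gaussian measure, with an explicit total variation bound $\|\mathrm{law}(X+G)-\gamma_n\|_{TV}\leq\varepsilon$ for a small universal $\varepsilon$. The hypothesis $\|\Cov X\|\leq\Lambda^2e^{-\Lambda^2}$ is exactly calibrated to beat the $e^{\Lambda^2}$ blow-up of $f$ on the ball of radius about $\Lambda$. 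However, the term $\mathbf{E}\|X\|^2$ is not controlled by the operator norm of the covariance, so I may need to project $g$ onto the direction of $X$ rather than use $\|g\|$ crudely. Since $\mathbf{E}\langle X,g\rangle^2\le \|\Cov X\|\,\|g\|^2$, working along the direction of $X$ should suffice.

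\emph{Step 2: correction by a coupling with cancelling pairs.} Using the maximal coupling, I would write $X+G=H$ on an event $E^c$ with $H$ standard Gaussian and $\mathbb{P}(E)\leq\varepsilon$, where $H,G$ are coupled with $X$ (possibly after enlarging the probability space). Then
$$X=H-G+\mathbf 1_E\,(X+G-H).$$
The remainder $R=\mathbf 1_E(X+G-H)$ is supported on a rare event. I would then prove a gluing lemma. If $R$ vanishes off an event of probability $\le\varepsilon$ and has Gaussian-type tails, then $R=\sum_{j\le m}G'_j$ for a universal $m$. To do this, split $R$ into two Gaussian "halves": use a standard Gaussian $K$ chosen so that $K$ is free on $E^c$, and write $R=(R+K)-K$. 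On $E^c$ the law of $R+K$ equals that of $K$, so only a mass-$\varepsilon$ part needs repairing. That part can be iterated, with the errors shrinking geometrically. Alternatively, the repair can be closed in finitely many steps by pairing terms $\pm K'$ that are arbitrary on $E$ and cancel on $E^c$. This is the kind of rearrangement already used for Theorem~\ref{thm:1d}.

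\emph{Main obstacle.} I expect the main difficulty to be making Step 2 terminate with a \emph{universal} $q$ rather than an infinite series. The repair on $E$ must produce exactly Gaussian marginals, not merely Gaussians up to small error. My first attempt would be to show that $\gamma_n$ dominates $\tfrac12\,\mathrm{law}(X+G)$ on the bulk, i.e.\ $f\ge\tfrac12$ there. Then $\gamma_n$ splits as a mixture of the good part of $\mathrm{law}(X+G)$ and a remainder measure of mass $\le\varepsilon$. That remainder can itself be written as a difference of two Gaussian laws restricted to a rare event, and the coupling closes after a fixed number (say $q\le 6$) of summands.
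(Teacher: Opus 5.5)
\paragraph{Step 1 is fine but only approximate.} Your density estimate works once you shrink $X$ by a large universal factor, which Corollary \ref{cor:rescale} permits. The clean route is the identity $\E_{\gamma_n}f^2=\E\, e^{\langle X,X'\rangle}$ for an independent copy $X'$, which gives $\chi^2\le\tfrac12 e^{\Lambda^2}\Lambda^2\|\Cov X\|\le\tfrac12\Lambda^4$; your analysis on $\{\|g\|\lesssim\Lambda\}$ is beside the point, since $\gamma_n$ lives at radius $\sqrt n$. But small total variation is only an approximate statement. The whole theorem lives in Step 2, and Step 2 is not proved.

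\paragraph{The gap is in Step 2.} Write $P=\mathrm{law}(X+G)$. The remainder $R=\mathbf 1_E(X+G-H)$ is centered and $O(1)$-subgaussian. Its conditional law on $E$ depends on how the excess part $(P-\gamma_n)_+$ and the deficit part $(\gamma_n-P)_+$ are coupled; for a generic coupling it has norm of order $\sqrt n$ there. None of the covariance information from the hypothesis survives in it. Your ``gluing lemma'' asserts that such a vector is a sum of a universal number of standard Gaussians. That is an instance of Problem \ref{S} for vectors with no usable structure: neither the theorem itself nor any other tool in the paper applies to $R$, and you give no reason why rarity of the support makes it tractable.

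Neither mechanism you offer closes this.
\begin{enumerate}
\item The iteration $R=(R+K)-K$ adds fresh standard Gaussian summands at every stage. Geometric decay of the bad probabilities still leaves infinitely many summands, and there is no evident way to regroup the pairs into finitely many standard Gaussians.
\item The pairs $\pm K'$ are not ``arbitrary on $E$''. If $K'=-K''$ on $E^c$, the sub-laws of $K'$ and $K''$ on $E$ are forced to be the exact complements of their sub-laws on $E^c$. You are then left with an exact, marginal-constrained representation problem for the conditional law of $R$ on $E$, which is again a sum-of-Gaussians problem.
\end{enumerate}
Saying the leftover measure is ``a difference of two Gaussian laws restricted to a rare event'' is a statement about measures. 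It is not a coupling whose sum has the law of $R$.

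\paragraph{The missing idea: exact Gaussian marginals with no approximation step.} The paper gets these by decomposing a single Gaussian along a partition. Let $R_j$ be the Voronoi cells of the centered simplex vectors $v_j$ and $G_1$ a standard Gaussian. Then $G_1=(G_1-C_dv_j)+C_dv_j$ on $\{G_1\in R_j\}$. Conditionally on each cell, the first term is a $1$-Lipschitz image of a Gaussian (Theorem \ref{caf}), hence lies in $\Sigma^2\mathcal G$ by Lemmas \ref{lem:average of gaussians} and \ref{lem:local global}. So the uniform law on $\{C_dv_j\}$ lies in $\Sigma^3\mathcal G$ (Lemma \ref{bessel}).

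The hypothesis $\|\Cov X\|\le\Lambda^2e^{-\Lambda^2}$ is then used through Marcus--Spielman--Srivastava (Theorem \ref{thm:mss}). It splits $X$ exactly into a mixture of $O(1)$-norm linear images of $\sqrt{\log d}$-simplex laws with $d=\Theta(e^{\Lambda^2})$. The bounded block means are absorbed by Corollary \ref{cor:bdd support II}. Every step is an exact identity in law, which is precisely what your TV-plus-repair scheme cannot supply.
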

Here, $\|\Cov X\|$ denotes the operator norm of the covariance matrix of $X$. Any centered random vector $X$ in $\R^n$ is $O(1)$-subgaussian as long as $\|\Cov X\|$ is small enough compared to an upper bound on $\|X\|$. The optimal trade-off is exactly as in Theorem  \ref{thm:general}: if $\|X\|\leq \Lambda$ almost surely, then $X$ is $O(1)$-subgaussian when $ \|\Cov X \|\leq {\Lambda^2 }{e^{-\Lambda^2}}$, and the bound ${\Lambda^2 }{e^{-\Lambda^2}}$ is the largest possible up to rescaling $\Lambda$ by a uniform factor. To get a feel for Theorem  \ref{thm:general}, the reader might try to prove that the following centered random vector $X$ is the sum of three Gaussian vectors (see Lemma \ref{bessel} and Subsection \ref{rem:generalize mss?}):
$X$ is uniformly distributed on the $n+1$ vertices of a regular $n$-simplex in $\R^n$, rescaled so that $\|X\| = C_n\sqrt{\log n}$ almost surely, where $C_n>0$ is a well-chosen constant uniformly bounded from above and below.  

\subsection*{Sums of large sets} 
A set $A$ in $\R^n$ is called permutation invariant when $(x_1,...,x_n)$ is in $A$ if and only if $(x_{\sigma(1)},...,x_{\sigma(n)})$ is in $A$ for any permutation $\sigma$ of $\{1,...,n\}$. 
The  permutation invariant case of the convexity problem is already surprisingly difficult. We  settle that case by combining an argument\footnote{\cite[Proposition 2.10]{Tal95} outlines an argument proving Corollary \ref{cor:permutation} based on Theorem \ref{thm:1d}, but it relies on \cite[Lemma 2.11]{Tal95} which is not correct (see Subsection \ref{subsection:permutation}). Fortunately, we can fix this issue by appealing to Theorem \ref{thm:general}.} of M. Talagrand \cite[Proof of Proposition 2.10]{Tal95} with both Theorems \ref{thm:1d} and \ref{thm:general}:

\begin{cor}[Permutation invariant sets] \label{cor:permutation}
There is a universal integer $q>0$ such that if $A$ is a permutation invariant, closed set in $\R^n$ with $\gamma_n(A)\geq \frac{2}{3}$, then there is a  permutation invariant convex body $K$ in $\R^n$ with 
$$\gamma_n(K)\geq\frac{1}{2}
\quad \text{and}\quad  K\subset  \underbrace{\vphantom{\Big|}A+\cdots+A}_{q\ \text{times}}.$$
\end{cor}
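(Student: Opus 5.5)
We follow the strategy of \cite[Proof of Proposition 2.10]{Tal95}, combined with the mechanism behind Theorem \ref{thm:equiivv}. The key observation is the following. Suppose $X$ is a random vector in $\R^n$ with $X=G_1+\cdots+G_q$, where each $G_i$ is a standard Gaussian vector, possibly dependent on the others. Then $\mathbb{P}[X\in qA]$ is at least $1-q\cdot\frac13$ after we replace $A$ by a set of slightly larger measure, obtained by summing a bounded number of copies of $A$. More usefully, every point $x$ with $\mathbb{P}[G_i\in A\ \forall i \mid X=x]>0$ lies in $A+\cdots+A$.

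Step 1 is to boost the measure. Using a few Minkowski sums, we replace $A$ by a permutation invariant closed set $A'$ with $\gamma_n(A')\geq 1-\varepsilon$, where $\varepsilon>0$ is a small universal constant. For example, since $A$ contains a definite fraction of Gaussian mass, $A+A\supset$ an enlargement of $A$, and the Gaussian isoperimetric inequality controls this. Because the Minkowski sum of permutation invariant sets is permutation invariant, $A'$ is still permutation invariant.

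Step 2 is to choose the convex body $K$. We take $K$ to be a permutation invariant convex set such that every $x\in K$ is the barycenter of a ``good'' measure, in the spirit of the equivalence in Theorem \ref{thm:equivalence}. Concretely, for each $x\in K$ we want a centered subgaussian vector $X_x$ with $x+X_x$ supported in a controlled region. We then write $x+X_x$ as a sum of $q$ vectors whose laws have small-density perturbations of $\gamma_n$, each landing in $A'$ with high probability.

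Permutation invariance enters through Talagrand's decomposition. We write a point in coordinates and average over permutations, so that the relevant random vector is $x_\sigma$ for a uniformly random permutation $\sigma$, recentered. Its empirical coordinate distribution is one-dimensional. In the one-dimensional part, where the spread of the coordinates of $x$ is matched to a Gaussian, we apply Theorem \ref{thm:1d}. This expresses a coordinate law that is $\kappa$-subgaussian as a sum of three Gaussians, and tensorizing gives i.i.d.-coordinate Gaussian vectors. The remaining correction comes from the discrepancy between i.i.d. coordinates and a random permutation of a fixed vector. That correction is a centered vector $X$ which is bounded, with $\|X\|\leq\Lambda$, and has tiny covariance, because permutation symmetry forces $\Cov X$ to be essentially a multiple of the projection onto $\mathbf{1}^\perp$ with small trace per coordinate. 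Theorem \ref{thm:general} then writes this correction as a sum of $q$ Gaussians. This replaces the incorrect \cite[Lemma 2.11]{Tal95}.

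Step 3 is to conclude. We combine the two decompositions into $x=\mathbb{E}$ of a sum of $O(1)$ standard Gaussians, with probability that all summands lie in $A'$ positive. This gives $x\in A'+\cdots+A'$. Finally, $\gamma_n(K)\geq\frac12$ follows because $K$ contains the set of points whose empirical coordinate distribution is close to Gaussian and whose mean is small, which has large measure.

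The main obstacle is the second half of Step 2. We must verify that the permutation-correction vector satisfies the norm and covariance hypotheses of Theorem \ref{thm:general}, namely $\|\Cov X\|\leq\Lambda^2e^{-\Lambda^2}$, uniformly in $n$. We would first try truncating the coordinates at level $\sim\sqrt{\log n}$ and computing the covariance of a random permutation directly.
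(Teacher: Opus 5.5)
There is a genuine gap. It sits exactly at the step you flag as the main obstacle, and truncation cannot close it.

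Your correction is $X=x_\sigma-Y$, where $x_\sigma$ is a uniformly random permutation of $x$ and $Y$ has i.i.d. coordinates with law $\nu_x=\frac1n\sum_i\delta_{x_i}$. In any coupling, $\|x_\sigma-Y\|^2$ is at least $n$ times the squared $W_2$-distance between $\nu_x$ and the empirical coordinate law of $Y$. This quantity is not uniformly small over $x\in K$.

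Take $x$ with $n/2$ coordinates equal to $c$ and $n/2$ equal to $-c$, for $c$ a small constant. This vector lies in the paper's $\Omega$. In fact, after a shift of order $n^{-1/2}$, it lies in every permutation invariant convex body of Gaussian measure $\geq \frac12$, because it is majorized by a typical Gaussian point. So you cannot choose $K$ to avoid it.

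For this $x$, the number of coordinates of $Y$ equal to $c$ deviates from $n/2$ by order $\sqrt n$. Hence in every coupling $\|X\|\gtrsim n^{1/4}$ with probability bounded below, and $\|\Cov X\|\geq \frac1n\E\|X\|^2\gtrsim n^{-1/2}$. Applying Theorem \ref{thm:general} to $sX$ (even after conditioning on the atom counts) then forces $s\lesssim \sqrt{\log n}\,n^{-1/4}$. That means about $n^{1/4}/\sqrt{\log n}$ Gaussians, not $O(1)$.

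The defect comes from multinomial fluctuations of the atom counts, not from heavy tails, so truncating at $\sqrt{\log n}$ changes nothing. Your scheme does work when $x$ is close to the Gaussian quantile vector, where the discrepancy is only $O(\sqrt{\log\log n})$. But $A+\cdots+A$ is not convex, so every point of $K$ has to be handled individually.

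The missing idea is to never compare with an i.i.d. vector. The paper proceeds as follows.
\begin{enumerate}
\item Take the coupling $\mu$ on $\R^3$ given by Theorem \ref{thm:1d} for $\nu=\frac1n\sum_i\delta_{\kappa' y_i}$.
\item Disintegrate $\mu$ along the fibers $\{s+t+u=\kappa' y_i\}$ into measures $\mu_i$, and use the product $\otimes_i\mu_i$.
\item This writes $\kappa' y$ \emph{exactly} as $z^{(1)}+z^{(2)}+z^{(3)}$. Each $z^{(j)}$ has independent, non-identically distributed coordinates whose laws $\theta_{i,j}$ satisfy $\sum_i\theta_{i,j}=n\gamma_1$.
\end{enumerate}
So there is no correction term at all. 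The only error left is in comparing each $z^{(j)}$ with $A$.

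Lemma \ref{corrrect} is the corrected form of \cite[Lemma 2.11]{Tal95}, and it holds for such non-identical laws. It shows that with probability $>2/3$, $z^{(j)}$ is within $O(\sqrt{\log\log n})$ of a permutation of the quantile vector $(r_i)$ inside $\{x_1+\cdots+x_n=0\}$, up to an $O(1)$ error along $\mathbf{1}$. The same holds for some $a\in A$.

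Theorem \ref{thm:general} is used only through Corollary \ref{lem:salvage}. Its role is to place a ball of radius $O(\sqrt{\log\log n})\ll\sqrt{\log n}$ in that hyperplane inside a bounded sum of copies of $A$. Taking $K=\Omega+\hat B+B(0,100)$, this gives $K\subset A_{(q)}$.
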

The convex body $K$ here is explicit and is made of vectors whose coordinates are subgaussian. The examples of \cite[Proposition 2.6]{Tal95} show that necessarily $q>2$ in Corollary \ref{cor:permutation}. 
In \cite[Theorem 1.3]{Joh25} S. Johnston found, for any integer $k>0$, examples of permutation invariant sets $A\subset \R^n$ with $\gamma_n(A)\geq 2/3$, such that $\{\sum_{j=1}^k \lambda_j a_j;\quad a_j\in A, \, \lambda_j\in [0,1], \, \sum_{j=1}^k \lambda_j =1\}$ does not contain any convex body $K'$ with $\gamma_n(K')\geq 1/2$.

Without symmetry assumptions, Theorem \ref{thm:general} answers an ellipsoid analogue of the convexity problem: how large is the largest ellipsoid contained inside a sum of large sets?
\begin{cor}[Largest Ellipsoid]\label{cor:large ellipsoid}  There exists a universal integer $q>0$ such that  if $A$ is  a closed set in $\R^n$ with $\gamma_n(A)\geq \frac{2}{3}$, then there is an ellipsoid $E$ in $\R^n$ with  
$$\gamma_n(E)\geq\frac{1}{2} \quad \text{and}\quad   \sqrt{\frac{\log n}{n}} \, E\subset \underbrace{\vphantom{\Big|}A+\cdots+A}_{q\ \text{times}}.$$ 
\end{cor}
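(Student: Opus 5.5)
We argue by duality, using the mechanism behind Theorem \ref{thm:equiivv}: if a random vector $X$ can be written as $X=G_1+\dots+G_q$ with each $G_j$ standard Gaussian, then $\mathbb{P}[X\in qA]\ge \mathbb{P}[X\in A+\dots+A]\ge 1-\sum_j\mathbb{P}[G_j\notin A]\ge 1-q/3$. This is not yet useful, so we first enlarge $A$. By the Gaussian concentration/isoperimetric inequality, we replace $A$ by the sum $A'=A+\dots+A$ of a fixed number $m$ of copies. After that step, $\gamma_n(A')$ is so close to $1$, say $\ge 1-\delta$, that a union bound over $q$ Gaussians still leaves positive probability.

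Concretely, we proceed in two steps. First, $A+A\supset A+ (A\cap (-A))$ and the symmetrization $B=A\cap(-A)$ has $\gamma_n(B)\ge 1/3$. Using the Gaussian isoperimetric inequality together with sums with a set of measure $1/3$, we get that $A+B+\dots+B$ contains the $r$-enlargement $A_r$ of $A$ for some radius $r\sim\sqrt n$ fraction. The step we actually need is weaker: the Minkowski sum of a bounded number of copies of $A$ has Gaussian measure at least $1-\delta$ for any prescribed universal $\delta>0$. This follows from Talagrand-type arguments: $A+A\supset 2(\tfrac{A+A}{2})$ and Borell's inequality $\gamma_n(A+tB_2^n)\ge\Phi(\Phi^{-1}(2/3)+t)$. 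Alternatively, we enlarge via Theorem \ref{thm:general} applied to small Euclidean balls.

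Second, we show $\sqrt{\log n/n}\,E\subset A'+\dots+A'$ for the ball $E=c\sqrt n B_2^n$, which has $\gamma_n(E)\ge1/2$. Suppose a point $x$ with $\|x\|\le c\sqrt{\log n}$ is given. For $X$ we take the centered random vector uniformly distributed on the $n+1$ vertices of a regular simplex containing $x$ as a vertex, rotated so that $x$ is a vertex and scaled with $\|X\|=\|x\|\le \Lambda:=c\sqrt{\log n}$. Its covariance is $\frac{\|x\|^2}{n}\Id$, so $\|\Cov X\|\le c^2\log n/n\le \Lambda^2e^{-\Lambda^2}$ once $c$ is small, since $e^{-c^2\log n}=n^{-c^2}\gg 1/n$. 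Theorem \ref{thm:general} then gives $X=G_1+\dots+G_q$. Choosing $\delta<1/(q(n+1))$ is impossible uniformly in $n$, so we need the finer version of the union bound. By the symmetry of the simplex under its vertex-permutation group, averaging over the group $H$ of rotations permuting the vertices, we may assume the coupling is $H$-invariant. Then $\mathbb{P}[X=x,\ G_j\in A'\ \forall j]>0$ follows as soon as $\mathbb{P}[\exists j: G_j\notin A']<\frac1{n+1}$. This still requires $\delta$ depending on $n$.

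The main obstacle is exactly this union-bound loss. To handle it, we replace the fixed simplex by a random rotation $U$ of it, uniform on $SO(n)$ subject to fixing the direction of $x$. The vector $UX$ then still satisfies the hypotheses of Theorem \ref{thm:general}. Conditioned on $UX=x$, the law of each $G_j$ is a rotation average, and we would bound $\mathbb{P}[G_j\notin A'\mid UX=x]\le 1/(3q)$ using the density of $G_j$ near $x/q$. If this fails, the fallback is to aim only for a general ellipsoid $E$ adapted to $A$ (as the statement allows), obtained by a Hahn–Banach/minimax argument: define $E$ from the worst direction of $A'$ and apply Theorem \ref{thm:general} to two-point-like vectors in each direction. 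I expect this step to be the hardest.
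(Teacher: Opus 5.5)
There is a genuine gap, and it is in your second step. The target you set there is that a bounded sum $A'+\cdots+A'$ contains the round ball of radius $c\sqrt{\log n}$, i.e.\ that $E=c\sqrt n\,B_2^n$ works. This is false for general $A$. Take the slab $A=\{x\in\R^n:\ |x_1|\le 1\}$. It has $\gamma_n(A)\approx 0.68\ge\frac23$, yet $A_{(k)}=\{|x_1|\le k\}$ contains no ball of radius larger than $k$, for every $n$. So no refinement of the union bound can rescue this step, whether random rotations, $H$-invariant couplings, or conditioning on $UX=x$. Indeed, for $x=c\sqrt{\log n}\,e_1$, every decomposition $x=g_1+\dots+g_q$ has some $|\langle g_j,e_1\rangle|\ge c\sqrt{\log n}/q$. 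Hence, given that the sum equals $x$, some $G_j\notin A'$ with probability one once $n$ is large. Round balls are stuck at the Steinhaus scale; the ellipsoid has to be adapted to $A$.

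Two smaller problems in the same vein. First, $\gamma_n(c\sqrt n\,B_2^n)\ge\frac12$ forces $c\gtrsim 1$, not $c$ small. Second, your first step needs Lemma~\ref{Steinhaus} ($B(0,\delta)\subset A+A$) before Borell's inequality says anything about $A_{(m)}$. The claimed $r\sim\sqrt n$ enlargement is also refuted by the same slab.

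The missing idea is the duality you only gesture at in your fallback, and its mechanism is not what you describe.

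\begin{itemize}
\item Symmetrize first: $A'=A_{(k_0)}\cap(-A_{(k_0)})$ still has $\gamma_n(A')\ge\frac23$.
\item Ellipsoids $E=\{x^TQx\le 1\}$ with $\Tr Q=\frac1{10}$ have $\gamma_n(E)\ge\frac9{10}$, by Markov, since $\E[G^TQG]=\Tr Q$.
\item Argue by contradiction. Suppose every such $E$ meets the complement of $\sqrt{n/\log n}\,A'_{(q_0)}$. Apply the von Neumann minimax theorem to $(Q,\mu)\mapsto\Tr(Q\Cov\mu)$, over trace-$\frac1{10}$ PSD matrices and symmetric probability measures on (the closure of) that symmetric complement. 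This is Lemma~\ref{cor:ellcov}. It yields a symmetric random vector $X_0$ supported there with $\|\Cov X_0\|\le 10$.
\item Truncate: $X_1=\mathbf{1}_{\{\|X_0\|^2\le 100n\}}X_0$. It equals $X_0$ with probability at least $\frac12$ by Markov and remains symmetric.
\item Then $\frac1{10}\sqrt{\log n/n}\,X_1$ has norm at most $\Lambda:=\sqrt{\log n}$ and covariance at most $\log n/n=\Lambda^2e^{-\Lambda^2}$. So Theorem~\ref{thm:general} writes it as $G_1+\dots+G_q$.
\item Now the union bound needs only $\gamma_n(A'_{(k_1)})>1-\frac1{10q}$, together with the event $X_1=X_0$. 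Any single atom of $X_0$ then gives the contradiction.
\end{itemize}

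This is why no $1/(n+1)$ loss ever appears: that loss came entirely from insisting on hitting a prescribed point $x$. Note also that ``two-point-like vectors in each direction'' cannot be fed into Theorem~\ref{thm:general}. The vector $\pm v$ with $\|v\|\approx\sqrt{\log n}$ has $\|\Cov\|=\|v\|^2$, which is too large by a factor of $n$. The minimax step is precisely what produces a measure with small covariance in every direction at once.
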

A classical result of Steinhaus (Lemma \ref{Steinhaus}) implies that, under the assumptions of  Corollary \ref{cor:large ellipsoid}, 
$A+A$ contains a round Euclidean ball $B$ such that $\gamma_n\big(O(\sqrt{n}) B\big)\geq 1/2$. 
Corollary \ref{cor:large ellipsoid} improves this simple estimate only by a modest factor $\sqrt{\log n}$. 
Yet a logarithmic factor is sometimes what separates an easy result from an optimal one. This is famously the case in the solution to the Kadison-Singer conjecture \cite[Theorem 1.4]{Mar15}, which we will crucially use.
Likewise here: up to universal constants, the order of the rescaling factor $\sqrt{{n}^{-1}{\log n}}$ in Corollary \ref{cor:large ellipsoid} is optimal even if $A$ is convex (Subsection \ref{rem:deux}).

\subsection*{\textcolor{RedViolet}{\textbf{Added May 2026:} A Riemannian convexity problem}}

\textcolor{RedViolet}{
Pushed by the desire to better understand the role of the special structure of $\R^n$ and the Gaussian measure in the solution to Problem \ref{T} in \cite{HST26}, and in view of \cite{Joh25}, we propose the following Riemannian version of the problem. Consider a weighted Riemannian manifold $(M^n,g,\mu:=e^{-f} \dvol_g)$ where $(M^n,g)$ is a complete $n$-dimensional Riemannian manifold with convex boundary, $f$ is a smooth real-valued function on $M^n$ and  $\dvol_g$ is the Riemannian volume density on $M^n$. Recall that the Bakry-Emery Ricci curvature of $(M^n,g,\mu)$ (see e.g. \cite{WW09}) is defined as 
$$\Ric_\mu:= \Ric +\Hess f.$$ 
A subset $C\subset (M^n,g)$ is convex if $C$ contains all length minimizing geodesics in $(M^n,g)$ between any two points of $C$, and  the convex hull $\mathrm{Conv}(Y)$ of a subset $Y$ is the smallest convex subset containing $Y$. 
\begin{prob}[Riemannian convexity problem] \label{GC} Is it true that for some universal integer $q>0$, for any weighted Riemannian manifold $(M^n,g,\mu)$ with $\Ric_\mu\geq 0$ and $\mu(M^n)<\infty$, for any closed subset $A\subset M^n$ with $\mu(A)\geq (1-q^{-1})\mu(M^n)$, there is a convex subset $K$ in $(M^n,g)$ with   
$$\mu(K)\geq q^{-n}\mu(M^n) \quad \text{and}\quad  K\subset \bigcup_{x_1,...,x_q\in A}\mathrm{Conv(\{x_1,...,x_q\})}?$$  \end{prob} 
Special cases include positive Ricci curvature manifolds $(M^n,g)$ endowed with the volume measure $\mu$, or Euclidean space $\mathbb{R}^n$ endowed  with a smooth log-concave measure $\mu$.
This problem is a weak generalization of Problem \ref{T}: in fact, the Gaussian space $(\R^n,\gamma_n)$ has nonnegative Bakry-Emery Ricci curvature, and one can check that a positive solution to Problem \ref{T} (confirmed in \cite{HST26}) yields a positive solution to Problem \ref{GC}. The problem can be stated more generally in the context of $\mathrm{RCD}(K,N)$ spaces, see e.g. \cite[Chapter 6]{GP20} \cite{Sturm23}.
}

\subsection*{Background and context}

Problem \ref{T}. Problem \ref{S} and Problem \ref{GC} highlight in a rather striking way the limits of our understanding of \emph{summation}, both in the context of Minkowski sums of sets in $\R^n$ and sums of random vectors with arbitrary couplings. 

On the side of summation of large sets, Problem \ref{T} proposes a potential bridge between the mature theory of convex bodies and general large sets. 
Early results of R. M. Starr \cite{Sta69}, W. R. Emerson and F. P. Greenleaf \cite{Eme69}, and more recent quantitative work of M. Fradelizi, M. Madiman, A. Marsiglietti and A. Zvavitch \cite{Fra16, Fra18} thoroughly examined the convexification effect of taking $k$-fold averages of sets. Unlike what is asked by Problem \ref{T}, the kind of estimates investigated there inevitably depend on the dimension. In \cite{Tal95}, M. Talagrand already observed that the integer $q$ in Problem \ref{T} must be larger than $2$. Based on optimal transport theory, S. Johnston \cite{Joh25} recently constructed counterexamples to a stronger version of Problem \ref{T} where $k$-fold sums are replaced by $k$-fold averages.
There is also a purely combinatorial version of the convexity problem \cite[Problem 3.4]{Tal95} \cite[Conjecture 7.1]{Tal10}, which is a special case of Problem \ref{T} and which is still open. It has lately received more attention due to exciting advances in the theory of thresholds, see \cite{Par24a,Par24b,Pha25} and references therein. 
In another related area, additive combinatorics, the summation of large sets is much better understood than in Gaussian spaces (see however \cite{Bob12, Fra24}). 
In Subsection \ref{rem:deux}, we outline a ``large slice'' estimate which is reminiscent of the Bogolyubov theorem \cite[Theorem 7.8.3]{Zha23}.

On the side of summation of Gaussian random vectors\footnote{
I would like to thank several people who, after a first draft of this paper was written, made me aware of relevant results: Daniel Dadush pointed out \cite{LSS22}, Ramon Van Handel pointed out \cite{Mao19}, Boaz Klartag pointed out \cite{Eld16}, Roman Vershynin pointed out \cite{MGV24}.}, 
not much was known beyond sums of independent Gaussian random vectors.  An important motivation for Problem \ref{S} is that a positive answer would considerably strengthen M. Talagrand's celebrated subgaussian comparison theorem \cite[Theorem 2.10.11]{Tal21} \cite[Corollary 1.2]{Van25}, which can be reformulated as stating that any $1$-subgaussian random vector is an average of standard Gaussian random vectors up to a uniform scaling (see Subsection \ref{rem:un} for our explanation of this fact). 
Our proof of Theorem \ref{thm:general} also suggests a hypothetical subgaussian analogue of the theorem of A. Marcus, D. Spielman and N. Srivastava \cite[Corollary 1.5]{Mar15} (Subsection \ref{rem:generalize mss?}), which would answer\footnote{\textcolor{RedViolet}{\textbf{Added May 2026:} This strategy ended up working, see Subsection \ref{rem:generalize mss?} and \cite[Appendix B]{HST26}.}} Problem \ref{S}. In the context of a scheduling problem in statistics, W. Rhee and M. Talagrand constructed in \cite[Theorem 2]{Rhe92} couplings of three random variables almost uniformly distributed on $[0,1]$ with constant sum. Recent elegant work on that topic  includes the result of Y. P. Liu, A. Sah, M. Sawhney \cite[Lemma 5]{LSS22}
which gives a coupling of two standard Gaussian random variables whose sum is a symmetric random variable taking values in $\{a,0,-a\}$ for some $a>0$, and the result of T.T. Mao, B. Wang, R.D. Wang \cite[Theorem 5]{Mao19}
which characterizes, in terms of convex order, sums of three or more random variables uniformly distributed on $[0,1]$ (see also \cite{Wan15}).
The analogue of \cite[Theorem 5]{Mao19} unfortunately does not hold for sums of standard Gaussian random variables because of \cite[Theorem 4.2, Corollary 4.4]{Joh25}. 
 In higher dimensions, one could show
from arguments of R. Eldan \cite[Theorem 1.4]{Eld16} that centered random vectors with $1$-uniformly log-concave distributions (which are automatically $1$-subgaussian) are sums of two standard Gaussian random vectors. We will recover this fact via simpler methods in Lemma \ref{lem:average of gaussians} (combined with Theorem \ref{caf}). Finally other questions about sums of  Gaussian vectors appear in \cite{MGV24}.

\subsection*{Overview}
The equivalence in Theorem \ref{thm:equiivv} follows from Theorem \ref{thm:equivalence}. The direction ``Problem \ref{T} implies Problem \ref{S}'' is based on a  tensoring trick, a different version of which already appears in \cite[Proposition 2.6]{Tal95}, and the result that given a large convex body, any 1-subgaussian random vector tends to belong to it \cite{Tal21}. The other direction crucially relies on a result of D. Dadush, S. Garg, S. Lovett and A. Nikolov \cite[Theorem 1.2]{Dad19}, that characterizes sets intersecting large convex sets in Gaussian spaces in terms of supports of 1-subgaussian random vectors. 

The main results, Theorem \ref{thm:1d} and Theorem \ref{thm:general}, are proved based on a blend of analytical and geometric methods.
A key tool behind both results is Lemma \ref{lem:average of gaussians}. It states that any contraction of a standard Gaussian random vector is an average of two standard Gaussian random vectors. It is proved via an application of It\^{o}'s formula, similarly to the proof of subgaussianity of contractions of Gaussian random vectors \cite[Theorem 1.5]{Pis16}. 
What makes this simple lemma very handy is the theory of transport maps, which provides many conditions under which a random vector is a contraction of a Gaussian random vector. In particular, when proving Theorem \ref{thm:1d} and Theorem \ref{thm:general}, we will make use of Caffarelli's contraction theorem \cite{Caf00},  a lemma of S. Bobkov \cite[Lemma 3.1]{Bob10}, and a more recent result of D. Mikulincer and Y. Shenfeld \cite{Mik24,Mik23}.

On a technical level, the most challenging step in the proof of Theorem \ref{thm:1d} is Lemma \ref{lem:density bound}. In that lemma,  we show that given a subgaussian variable $S$, there is a carefully chosen coupling with a Gaussian random variable $G$ so that the sum $S+G$ has density 
uniformly comparable above and below to a single shifted Gaussian density. Our choice of coupling  is rather intricate, because naive couplings (like choosing $G$ to be independent of $S$) fail to be useful. Then, the lemma of S. Bobkov \cite[Lemma 3.1]{Bob10} ensures that $S+G$ is, up to a uniform rescaling, a contraction of a standard Gaussian random variable. Combined with Lemma \ref{lem:average of gaussians}, the proof of Theorem \ref{thm:1d} can then be completed.  

The proof of  Theorem \ref{thm:general} is conceptual. By the work of A. Marcus, D. Spielman and N. Srivastava \cite{Mar15}, a general random vector $X$ in $\R^n$ with good bounds on norm and covariance can be decomposed into a mixture of ``simple'' random vectors $X_\theta$, which are uniformly distributed on a small set of vectors (called Bessel sequences)  enjoying sharp quantitative bounds on their norm and covariance. A typical example of a simple random vector is one which is uniformly distributed on the vertices of a regular $n$-simplex, with norm $\sqrt{\log n}$ almost surely.
In Lemma \ref{bessel}, we show that each $X_\theta$ is a sum of standard Gaussian vectors up to a small error. The proof relies  crucially on Lemma \ref{lem:average of gaussians} again, and also uses Caffarelli's theorem  \cite{Caf00}. We deal with the error term with a result of D. Mikulincer and Y. Shenfeld \cite[Theorem 1.3]{Mik23}. This mostly   proves  Theorem \ref{thm:general}.

\subsection*{Acknowledgements}
First, I would like to thank Huy Tuan Pham for introducing me to the convexity problem, and Michel Talagrand for sharing many interesting and thoughtful comments. 
I am grateful to Ramon van Handel for enlightening conversations and for sending  useful references. I  also thank Assaf Naor for suggesting additional motivations for the problem and connected questions. Thanks to an exchange with Samuel Johnston, I detected an issue in an earlier version of the paper. This article benefited from further discussions with Shiqi Song, Boaz Klartag, Yair Shenfeld, Daniel Dadush, Haotian Jiang, Roman Vershynin, Jinyoung Park and Tom Hutchcroft. A. S. was partially supported by NSF grant DMS-2405175 and an Alfred P. Sloan Research Fellowship.

\subsection{Conventions and notations:}\label{sub:convention}

A random vector $X$ 
in $\R^n$ is centered if $\E[X]=0$.
The covariance matrix of $X$ (resp. of its probability distribution $\mu$) is denoted by $\Cov X$ (resp. $\Cov \mu$). We denote the identity matrix in $\R^n$ by $\Id$ or sometimes $I_n$. 
Given random vectors $X$ in $\R^n$ and $Y_1,...,Y_k$ in $\R^n$, such that the $Y_i$'s are defined on the same probability space, by abuse of notation, we often write $X=Y_1+...+Y_k$ when $X$ and $Y_1+...+Y_k$ have the same probability distribution. We say that $X$ is the sum of $k$ standard Gaussian vectors in $\R^n$ 
if there is a coupling of $k$ random vectors  $Y_1,...,Y_k\sim \mathcal{N}(0,I_n)$ such that the sum has same probability distribution as $X$. 
Given a random vector $X$ we may sometimes define another random vector $Y$ with respect to $X$ and consider $X+Y$, and $Y$ is implicitly assumed to be defined on the same probability space as $X$ (we might need to enlarge it).
For any integer $q>0$, define 
 \begin{equation}\label{gaussian sum notation}
 \begin{split}
\Sigma^q\mathcal{G}(\R^n):=\{&\text{random vectors $X$ in $\R^n$ such that $X=G_1+...+G_q$}\\
&\text{for some random vectors $G_1,...,G_q\sim \mathcal{N}(0,I_n)$}\}.
\end{split}
\end{equation}
Given a vector $x\in \R^n$, $\|x\|$ denotes its Euclidean norm. Given an $n$-by-$n$ matrix $M$, $\|M\|$ denotes its operator norm.
A set $A'\subset \R^n$ is symmetric if $A'=-A'$. Given a set $A$ in $\R^n$ and an integer $k>0$, we denote its $k$-fold Minkowski sum by
 \begin{equation}\label{sum notation}
 A_{(k)}:=A+...+A \quad \text{($k$ times)}.
 \end{equation}
Given a function $f:\R\to \R$, we will use the usual notation $O(f)$ (resp. $\Theta(f)$) to denote a function $g:\R\to \R$ such that $g(x)\leq Cf(x)$ (resp. $cf(x)\leq g(x)\leq Cf(x)$) for some constants $c,C>0$ independent of $x$. The floor function is denoted by $\lfloor x\rfloor:=$ largest integer $\leq x$.

\section{Equivalence}\label{section:equivalence}

In this section, which can mostly be read independently of the next sections, we explain the equivalence between Problem \ref{T}, the original formulation\footnote{The original problem  differs slightly from Problem \ref{T} as it assumes sets to be ``balanced''.} of Problem \ref{T} in \cite[Problem 2.3]{Tal95} \cite[Conjecture 2.1]{Tal10}, and Problem \ref{S}.  
We first define certain numbers $q_{S,n}$, $q_{C,n}$ and $q_{C,n}'$. By convention, the smallest element of an empty subset of the integers is defined to be $\infty$. 
A set $A\subset \R^n$ is called \emph{balanced} if it is symmetric and star-shaped, namely for any $x\in A$ and $\lambda\in [-1,1]$, $\lambda x\in A$. Recall also the notations $\sum^q\mathcal{G}(\R^n)$ in (\ref{gaussian sum notation}) and $A_{(k)}$ in  (\ref{sum notation}).

\begin{itemize}

\item $q_{C,n}:=$  smallest positive integer $q$ such that for any closed set $A$ in $\R^n$ with $\gamma_n(A)\geq \frac{2}{3}$, $A_{(q)}$ contains a convex body $K$ with $\gamma_n(K)\geq \frac{1}{2}$.
\vspace{1em}

\item $q_{C,n}':=$ smallest positive integer $q$ such that for any balanced closed set $A$ in $\R^n$ with $\gamma_n(A)\geq \frac{2}{3}$, $A_{(q)}$ contains a convex body $K$ with $\gamma_n(K)\geq \frac{1}{2}$.
\vspace{1em}

\item $q_{S,n}:=$ smallest positive integer $q$ such that for any centered $1$-subgaussian random vector $X$ in $\R^n$, we have $X\in \Sigma^{q'}\mathcal{G}(\R^n)$ for some integer $q'\leq q$.

\end{itemize}

The next result implies Theorem \ref{thm:equiivv}:

\begin{thm}[Equivalence] \label{thm:equivalence}
The following are equivalent: as $n\to \infty$,
\begin{equation*}
\text{$(i)$ $q_{C,n} =O(1)$},\quad \text{$(ii)$ $q_{C,n}' =O(1)$},\quad \text{$(iii)$  $q_{S,n} =O(1)$}.
\end{equation*}
\end{thm}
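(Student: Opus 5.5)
The plan is to prove the cycle of implications $(i)\Rightarrow(ii)\Rightarrow(iii)\Rightarrow(i)$. The step $(i)\Rightarrow(ii)$ is trivial, since $q_{C,n}'\leq q_{C,n}$: balanced closed sets are in particular closed sets.

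\textbf{$(ii)\Rightarrow(iii)$, via a tensoring trick.} Let $X$ be a centered $1$-subgaussian vector in $\R^n$ and set $q=q'_{C,N}$ for a large auxiliary dimension $N=nm$. Write $\R^N=(\R^n)^m$ and let $A\subset\R^N$ be the closure of the set of $(x_1,\dots,x_m)$ whose empirical measure $\frac1m\sum_j\delta_{x_j}$ is close to $\gamma_n$ in a weak metric that only tests bounded Lipschitz functions. We then replace $A$ by a balanced closed subset of its hull; for instance we may use the symmetric, star-shaped closure, since scalings by $|\lambda|\leq1$ preserve proximity to Gaussian-type laws in a suitable convex-order sense. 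By the law of large numbers, $\gamma_N(A)\geq 2/3$ once $m$ is large. Then $(ii)$ gives a convex body $K\subset A_{(q)}$ with $\gamma_N(K)\geq1/2$.

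Now take $m$ i.i.d.\ copies $X_1,\dots,X_m$ of $X$. The vector $(X_1,\dots,X_m)$ is centered and $1$-subgaussian in $\R^N$. Talagrand's theorem \cite[Theorem 2.10.11]{Tal21}, in the form recalled in Subsection \ref{rem:un} (a $1$-subgaussian vector lies in a convex body of Gaussian measure at least $1/2$ with probability bounded below, after a universal rescaling), then shows that with positive probability $c(X_1,\dots,X_m)\in K\subset A_{(q)}$. To absorb the constant $c$, we start from $X/c$, or apply the argument to $C X$ and use that $\Sigma^{q}\mathcal G$ scales. Hence there is a deterministic sequence $(x_j)$ whose empirical measure approximates the law of $X$ and which splits as $x_j=a^1_j+\dots+a^q_j$ with each $(a^i_j)_j$ having empirical measure close to $\gamma_n$. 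Choosing $J$ uniformly in $\{1,\dots,m\}$ couples $q$ near-Gaussian vectors $a^i_J$ with sum $x_J\approx X$ in law. Letting $m\to\infty$ and using tightness (subgaussian tails are uniform), a compactness argument on couplings yields exact $G_1,\dots,G_q\sim\mathcal N(0,I_n)$ with $G_1+\dots+G_q\sim X$. The requirement $q'\leq q$ accommodates the balancing, because the star-shaped shrinking may introduce factors $\lambda$.

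\textbf{$(iii)\Rightarrow(i)$, via Dadush--Garg--Lovett--Nikolov.} We use \cite[Theorem 1.2]{Dad19}: if a closed set $A$ meets every convex set of Gaussian measure at least some small universal $\varepsilon$, which holds when $\gamma_n(A)\geq2/3$ after adjusting constants, then there is a centered $C$-subgaussian $X$ supported in $A$. Rescaling, $X/C$ is $1$-subgaussian, so by $(iii)$, $X = C(G_1+\dots+G_q)$. Given $y$ in a convex body $K$ to be chosen, we want to write $y$ as a sum of $q'$ points of $A$. To do so we apply the same theorem to the translated sets $A-$(shifts) or to $A\cap(y/q-A_{(q-1)})$, which is the natural approach. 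Alternatively, we can proceed dually: Gaussian vectors $G_i$ satisfy $\gamma_n$-concentration, so for a convex $K$ with $\gamma_n(K)\geq1/2$ a Gaussian $G_i$ lands in a suitable translate of $A$ with high probability. Summing and using that $X\in A$ almost surely shows that the support of $\sum G_i$ covers a large convex set.

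I expect this last step to be the main obstacle: converting ``a subgaussian vector supported in $A$'' into ``$A_{(q)}$ contains a large convex body.'' The cleanest route I would try first is the contrapositive through Hahn--Banach/minimax. If no convex body of measure $1/2$ lies in $A_{(q)}$, then, using \cite{Dad19} in its dual form on the complement, there is a $1$-subgaussian centered vector whose support avoids $(A_{(q)})^{c}$-type regions. Writing it as $\sum G_i$ via $(iii)$, some $G_i$ must with large probability avoid $A$, which contradicts $\gamma_n(A)\geq2/3$ by a union bound once the constants are tuned. Getting this duality to produce the right quantifiers is the delicate point.
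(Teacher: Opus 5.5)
Your step $(i)\Rightarrow(ii)$ is fine, and your $(ii)\Rightarrow(iii)$ is the paper's tensoring argument: a balanced set of tuples whose empirical law is close to that of $\lambda G$ with $\lambda\in[-1,1]$, i.i.d.\ copies of $X$, Talagrand's theorem, then a compactness limit. Two steps fail as written.

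\textbf{Talagrand's theorem at measure $1/2$.} It cannot be applied to an arbitrary convex body of measure $1/2$. Take $K$ a truncated half-space $\{x_1\geq-\eta\}$ and $Y=Y_1e_1$ with $Y_1$ centered, $1$-subgaussian and $\leq -a<0$ with probability $1-\delta$. Then $\mathbb{P}[cY\in K]\leq\delta$ once $\eta<ca$, and $\delta$ can be made arbitrarily small. The paper first enlarges the body. By Lemma \ref{Steinhaus}, $K+B(0,D)\subset A_{(q+O(D))}$, and by Lemma \ref{cor:vol sum} this set has measure at least $9/10$, which is what Theorem \ref{tal} requires.

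\textbf{The factors $\lambda_j$.} The limit only yields $\frac1{p'}X=\lambda_1G_1+\dots+\lambda_qG_q$ with $\lambda_j\in[-1,1]$. The clause $q'\leq q$ in the definition of $q_{S,n}$ does not absorb these factors; it allows fewer summands, not shrunken ones. The missing fact is Lemma \ref{lem:decomposition} / Corollary \ref{cor:linear bis}: for $|\lambda|\leq 1$, $\lambda G$ is itself a sum of two standard Gaussians. Write $G=U+V$ with $U\sim\mathcal N(0,\frac{\lambda^2}{4}I_n)$ and $V\sim\mathcal N(0,(1-\frac{\lambda^2}{4})I_n)$ independent. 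Then $G'=U-V$ is standard and $G+G'=2U\sim\lambda G$. With both fixes you get $q_{S,n}=O(q)$.

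\textbf{The direction $(iii)\Rightarrow(i)$.} Here you do not yet have a proof.
\begin{itemize}
\item Applying \cite{Dad19} to $A$ itself gives a subgaussian vector supported \emph{in} $A$, which points the wrong way.
\item The ``dual'' covering heuristic has no content: nothing forces the support of $\sum G_i$ to cover a convex set.
\item Your last paragraph has the right skeleton, which is the paper's, but it does not close. The vector from Theorem \ref{dgln} is supported in $\R^n\setminus\frac1pS$, so it \emph{avoids} a shrunken copy of the sumset, not its complement.
\item That theorem needs $S$ open and symmetric. This is why the paper first replaces $A$ by the interior of $A_{(c_2)}\cap(-A_{(c_2)})$, which also gives $X\sim-X$, hence centered, so that $(iii)$ applies.
\end{itemize}

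More importantly, the union bound cannot contradict $\gamma_n(A)\geq 2/3$. With $q'\geq 3$ summands, $\sum_i\mathbb{P}[G_i\notin A]$ can be at least $1$, and no tuning of constants changes that. The missing ingredient is amplification of measure through the sumset.
\begin{enumerate}
\item By Lemma \ref{Steinhaus} and Lemma \ref{cor:vol sum}, $\gamma_n(A_{(k_1)})\geq 1-\frac{1}{10q'}$ for some $k_1$ depending only on $q'$.
\item Apply Theorem \ref{dgln} to $S=A_{(k_0)}$ with $k_0=pk_1q'$. Then $\frac1pA_{(k_0)}\supset A_{(k_1q')}$.
\item Write $X=G_1+\dots+G_{q'}$. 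With positive probability every $G_j$ lies in $A_{(k_1)}$, which puts a point of $\supp X$ in $A_{(k_1q')}$. This contradicts the support condition.
\end{enumerate}
The conclusion is $q_{C,n}\leq pk_1q'$.
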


We will use the following known extension of Steinhaus theorem\footnote{The classical Steinhaus theorem states that for $\varepsilon>0$ small enough, and some $a_\varepsilon>0$, if a subset $A\subset [-1,1]$ has Lebesgue measure at least $2-\varepsilon$, then $A+A$ contains the interval $[-a_\varepsilon,a_\varepsilon]$. }:

\begin{lem}[Steinhaus Lemma]\label{Steinhaus}
There is a $\delta>0$
such that for any dimension $n\geq 1$, for any closed set $A$ in $\R^n$ with $\gamma_n(A)\geq \frac{2}{3}$,   
$$B(0,\delta)\subset A+A$$
where $B(0,\delta)$ is the closed Euclidean ball of radius $\delta$ centered at $0$. In particular, given any $D>0$, there is an integer $p=O(D)$ such that
$$B(0,D)\subset A_{(p)}.$$
\end{lem}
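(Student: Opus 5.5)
The plan is to prove the Steinhaus Lemma by showing that for every $x$ with $\|x\|\leq \delta$, the sets $A$ and $x-A$ intersect. An intersection point $a\in A\cap(x-A)$ gives $x=a+a'$ with $a,a'\in A$, so $x\in A+A$. Since $\gamma_n(A)\geq 2/3$, it suffices to prove $\gamma_n(x-A)>1/3$, because then $\gamma_n(A)+\gamma_n(x-A)>1$ forces the two sets to meet.

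Write $-A$ for the reflection. The Gaussian measure is symmetric, so $\gamma_n(-A)=\gamma_n(A)\geq 2/3$. It then remains to compare $\gamma_n(-A+x)$ with $\gamma_n(-A)$ uniformly in $n$ for small shifts $x$. I will use the dimension-free bound
$$|\gamma_n(B+x)-\gamma_n(B)|\leq d_{TV}\big(\mathcal{N}(x,I_n),\mathcal{N}(0,I_n)\big)=2\Phi(\|x\|/2)-1\leq \frac{\|x\|}{\sqrt{2\pi}}.$$
The equality holds because, after rotating so that $x$ lies along a coordinate axis, the total variation between the two Gaussians reduces to that between one-dimensional Gaussians $\mathcal{N}(\|x\|,1)$ and $\mathcal{N}(0,1)$. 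Alternatively, Pinsker's inequality gives the bound $\|x\|/2$ via $\mathrm{KL}=\|x\|^2/2$.

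Consequently $\gamma_n(x-A)\geq 2/3-\|x\|/\sqrt{2\pi}>1/3$ whenever $\|x\|\leq\delta$ for any fixed $\delta<\sqrt{2\pi}/3$, for example $\delta=1/2$. Closedness of $A$ plays no real role here beyond measurability. This proves $B(0,\delta)\subset A+A$.

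For the second claim I use $B(0,D)\subset m\,B(0,\delta)\subset (A+A)_{(m)}=A_{(2m)}$ with $m=\lceil D/\delta\rceil$. The first inclusion holds because any $y$ with $\|y\|\leq D$ can be written as $m$ copies of $y/m$, each of norm at most $\delta$. The middle inclusion follows from $B(0,\delta)\subset A+A$ by Minkowski summation. Hence $p=2\lceil D/\delta\rceil=O(D)$, at least for $D\geq\delta$; for small $D$ one simply takes $p=2$.

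There is no serious obstacle here. The only point that needs care is that the shift estimate must be independent of $n$, and the rotation-invariance reduction to one dimension, or Pinsker's inequality, handles this.
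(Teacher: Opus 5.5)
Your proof is correct and follows essentially the paper's route: show that a small translate of $-A$ still has Gaussian measure $>1/3$ by a dimension-free shift estimate, then use $\gamma_n(A)\geq 2/3$ to force an intersection. The only difference is how the shift estimate is obtained. You use the exact total variation distance $2\Phi(\|x\|/2)-1$ via rotation to one dimension, while the paper compares the Gaussian densities pointwise on the high-probability set where $\bigl|\|x-v\|^2-\|x\|^2\bigr|\le\epsilon$. Your version also gives an explicit $\delta=1/2$ and spells out the deduction $B(0,D)\subset A_{(2\lceil D/\delta\rceil)}$, which the paper leaves implicit.
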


\begin{proof}
Fix $\epsilon>0$. If $\delta>0$ is small enough depending only on $\epsilon$, then for any $n\geq 1$ and any $v\in \R^n$ of norm $\delta$, the Gaussian measure of the set of points $x\in\R^n$ with $|\|x-v\|^2 - \|x\|^2| = |-2\langle x,v\rangle  +\|v\|^2| \leq \epsilon$ is at least $1-\epsilon$. Hence, if $\delta$ is small enough, then for any $n\geq 1$, any closed set $A\subset \R^n$ with $\gamma_n(A)\geq \frac{2}{3}$ and any $v\in \R^n$ with $\|v\|\leq \delta$,
\begin{align*}
\gamma_n(A-v) &= \frac{1}{(\sqrt{2\pi})^n} \int_{A} \exp(-\frac{\|x-v\|^2}{2}) dx \\
& >\frac{9}{10}\frac{1}{(\sqrt{2\pi})^n} \int_{A} \exp(-\frac{\|x\|^2}{2}) dx   =\frac{9}{10} \gamma_n(A)  > 1-\gamma_n(A).
\end{align*}
This implies that $(A-v)\cap (-A) \neq \varnothing$, which gives the lemma.
\end{proof}

From the standard Gaussian isoperimetric inequality \cite[Theorem 2.1]{Lat03}, we have:
\begin{lem}[Volume of neighborhood]\label{cor:vol sum}
Given any dimension $n\geq 1$, any closed set $S$ in $\R^n$ with $\gamma_n(S)\geq \frac{1}{2}$, and any $D>0$,
$$\gamma_n(S+B(0,D)) \geq 1-2\exp(-\frac{D^2}{2}).$$
\end{lem}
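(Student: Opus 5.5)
The plan is to derive this from the Gaussian isoperimetric inequality in its standard form. For a closed set $S$ and $D>0$, one has
$$\gamma_n(S+B(0,D))\geq \Phi(\Phi^{-1}(\gamma_n(S))+D),$$
where $\Phi$ is the standard one-dimensional Gaussian distribution function. Since $\gamma_n(S)\geq \frac12$, we have $\Phi^{-1}(\gamma_n(S))\geq 0$. Monotonicity of $\Phi$ then gives $\gamma_n(S+B(0,D))\geq \Phi(D)$.

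It remains to bound the tail $1-\Phi(D)=\mathbb{P}[g>D]$ for a standard Gaussian variable $g$. The Chernoff bound gives
$$\mathbb{P}[g>D]\leq \inf_{\lambda>0} e^{-\lambda D}\,\E[e^{\lambda g}]=\inf_{\lambda>0} e^{-\lambda D+\lambda^2/2}=e^{-D^2/2}.$$
This is already smaller than $2e^{-D^2/2}$, so the factor $2$ in the statement is simply slack. Combining the two steps gives $\gamma_n(S+B(0,D))\geq 1-e^{-D^2/2}\geq 1-2\exp(-\frac{D^2}{2})$.

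There is no real obstacle here. The one point to check is that the Minkowski sum $S+B(0,D)$ with the closed ball is exactly the closed $D$-neighborhood of $S$, which is the set the isoperimetric inequality \cite[Theorem 2.1]{Lat03} controls. This holds because $S$ is closed, and the set is closed, so measurability is automatic. If the isoperimetric statement is only available for open neighborhoods, apply it with radius $D'<D$ and let $D'\uparrow D$.
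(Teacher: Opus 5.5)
Your proposal is correct and follows the paper's route: the paper simply cites the Gaussian isoperimetric inequality \cite[Theorem 2.1]{Lat03}, and you make explicit the two routine steps it leaves implicit, namely $\Phi^{-1}(\gamma_n(S))\geq 0$ when $\gamma_n(S)\geq \frac12$, and the tail bound $1-\Phi(D)\leq e^{-D^2/2}$. Your side remarks are also fine: $S+B(0,D)$ is closed as the sum of a closed and a compact set, and the factor $2$ in the statement is indeed slack.
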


We will need the following result of Talagrand \cite{Tal21}, see \cite[Lemma 1.3]{Dad19}\footnote{The original statement was for symmetric convex bodies. Our extension simply follows from the fact that if $K_1$ is a convex body with $\gamma_n(K_1)>9/10$, then $K_1\cap (-K_1)$ is a symmetric convex body with Gaussian measure at least $\frac{1}{2}$.}:
\begin{thm}[\cite{Tal21}]\label{tal}
Let $Y$ be a centered $1$-subgaussian random vector in $\R^n$. There is a universal integer $p'\geq 1$ such that for any convex body $K\subset \R^n$ with $\gamma_n(K)\geq \frac{9}{10}$, we have  $\mathbb{P}[\frac{1}{p'}Y\in K]\geq \frac{1}{2}$.
\end{thm}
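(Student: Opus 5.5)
This is Talagrand's majorizing-measure comparison, and I would prove it by comparing the supremum of the linear process $t\mapsto\langle t,Y\rangle$ with the supremum of the Gaussian process $t\mapsto\langle t,G\rangle$, where $G\sim\gamma_n$. The generic chaining theory is used as a black box.

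\emph{Reduction to the symmetric case.} As in the footnote, I first replace $K$ by $K\cap(-K)$. Since $\gamma_n(K)\ge 9/10$ and $\gamma_n$ is symmetric, the union bound gives $\gamma_n(K\cap(-K))\ge 4/5\ge 1/2$. So it suffices to treat a symmetric convex body $K$ with $\gamma_n(K)\ge \frac12$. By Hahn--Banach, $K=\{x:\ \sup_{t\in T}\langle t,x\rangle\le 1\}$, where $T=K^\circ$ is the polar body. By compactness and monotone convergence, it also suffices to prove a bound uniform over finite subsets $T'\subset T$.

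\emph{Gaussian side.} The set $\{\sup_{t\in T}\langle t,G\rangle\le 1\}$ has probability at least $\frac12$, so the median of $\sup_T\langle t,G\rangle$ is at most $1$.
\begin{itemize}
\item For each fixed $t\in T$, the slab $\{|\langle t,x\rangle|\le 1\}$ contains $K$, so its Gaussian measure is at least $\frac12$. This forces $\|t\|\le c$ for a universal $c$, so $\sigma:=\sup_T\|t\|=O(1)$.
\item Gaussian concentration for the $\sigma$-Lipschitz function $x\mapsto\sup_T\langle t,x\rangle$ then gives $\E\sup_{t\in T}\langle t,G\rangle\le 1+C\sigma=O(1)$.
\end{itemize}

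\emph{Subgaussian side.} Set $X_t=\langle t,Y\rangle$. Since $Y$ is centered and $1$-subgaussian, each increment $X_s-X_t=\langle s-t,Y\rangle$ satisfies
\[
\mathbb{P}\bigl[|X_s-X_t|\ge u\bigr]\le 2\exp\Bigl(-\frac{u^2}{2\|s-t\|^2}\Bigr),
\]
so $(X_t)_{t\in T'}$ is a subgaussian process for the canonical metric of the Gaussian process $\langle t,G\rangle$. Generic chaining gives
\[
\E\sup_{T'}X_t\le L\,\gamma_2(T',\|\cdot\|).
\]
The majorizing measure theorem (Talagrand's lower bound) gives
\[
\gamma_2(T',\|\cdot\|)\le L\,\E\sup_{T'}\langle t,G\rangle.
\]
Combining these and letting $T'\uparrow T$ yields $\E\sup_T X_t\le C'$ for a universal $C'$; we may assume $\sup_T X_t\ge 0$ since $T$ is symmetric.

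\emph{Conclusion.} Markov's inequality gives $\mathbb{P}[\sup_T\langle t,Y\rangle> 2C']\le\frac12$, that is, $\mathbb{P}[Y/(2C')\in K]\ge\frac12$. Taking $p'=\lceil 2C'\rceil$ works because $K$ is convex and contains $0$, so $Y/(2C')\in K$ implies $Y/p'\in K$.

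The main obstacle is the lower bound $\gamma_2\lesssim\E\sup\langle t,G\rangle$, which is the deep part of the majorizing measure theorem. I would not reprove it. The remaining steps (the polar description of $K$, the bound on $\sigma$, concentration, and Markov) are routine.
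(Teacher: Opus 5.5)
Your proof is correct and follows the paper's treatment. The paper's only argument is the footnote reduction from $K$ to the symmetric body $K\cap(-K)$, whose Gaussian measure is at least $4/5\geq 1/2$; it then cites Talagrand's theorem for symmetric convex bodies, while your generic-chaining plus majorizing-measure argument is the standard proof of that cited statement, using $\sup_{t\in K^\circ}\|t\|=O(1)$ and Gaussian concentration to pass from the median to the mean of $\sup_{t\in K^\circ}\langle t,G\rangle$.
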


Another ingredient will be the following corollary of a result of Dadush-Garg-Lovett-Nikolov \cite[Theorem 1.2]{Dad19}\footnote{In the original statement, $T:=\R^n\setminus S$ is assumed to be finite and $X$ does not satisfy $X\sim -X$, but the result is easily extended by compactness, and using $S=-S$ and  considering a mixture of $X$ and $-X$.}:
\begin{thm}[\cite{Dad19}]\label{dgln}
Let $S\subset \R^n$ be an open symmetric set. If $S$ contains no symmetric convex body $K$ with $\gamma_n(K)\geq \frac{1}{2}$, then there is a $1$-subgaussian random vector $X$ in $\R^n$ such that $X\sim -X$ and supported on $\R^n\setminus \frac{1}{p}S$ where $p>0$ is a universal integer. 
\end{thm}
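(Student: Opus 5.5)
The plan is to deduce the statement from \cite[Theorem 1.2]{Dad19}, in the form: there are universal constants $c\in(0,\frac12)$ and $p\geq 1$ such that for every finite set $F\subset\R^n$ meeting every symmetric convex body of Gaussian measure at least $c$, there is a $1$-subgaussian random vector supported on $\frac1p F$. I am treating this form as the input, and the main caveat is whether the cited theorem really allows a threshold $c<\frac12$ (possibly at the cost of enlarging $p$). The reduction has three steps: pass from the closed set $T:=\R^n\setminus S$ to finite sets by compactness, take a weak limit of the resulting vectors, and symmetrize.

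\textbf{Step 1 (truncation and finite subsets).} Since $S$ is open and symmetric, $T$ is closed and symmetric, and by hypothesis every symmetric convex body $K$ with $\gamma_n(K)\geq\frac12$ meets $T$. Fix $R$ large so that $\gamma_n(B(0,R))\geq 1-\eta$ for a small $\eta>0$, and set $T_R:=T\cap B(0,R)$, which is compact. I would first show that every symmetric convex body $K\subset B(0,2R)$ with $\gamma_n(K)\geq\frac12-\eta$ meets $T_R$. If this fails for the threshold $\frac12-\eta$, the fix is to apply the hypothesis with a slightly inflated body, using Lemma \ref{cor:vol sum}, or to run the whole argument at threshold $c$.

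Next I would show that some finite $F\subset T_R$ meets all such bodies, argued by contradiction. Suppose that for every finite $F$ some admissible $K_F$ misses $F$. The admissible bodies form a Hausdorff-compact family, so taking a limit along the directed set of finite subsets gives a symmetric convex body $K$ of measure $\geq\frac12-\eta$. Every point of $T_R$ is eventually in $F$, so it is not in the interior of $K$. Then $(1-\epsilon)K$ misses $T_R$ and still has measure $\geq\frac12-2\eta$, contradicting the previous paragraph after adjusting constants. I expect this compactness step, and keeping the measure threshold consistent through the shrinkings, to be the main (though routine) obstacle.

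\textbf{Step 2 (limit).} Apply \cite[Theorem 1.2]{Dad19} to each such finite $F$ to get a $1$-subgaussian $X_F$ supported on $\frac1p F\subset\frac1p T$. All these vectors are tight, so along $R\to\infty$ (or along a suitable subnet) they converge weakly to some $X$. The support of $X$ lies in the closed set $\frac1p T=\R^n\setminus\frac1p S$. The condition $\mathbb P[|\langle X,v\rangle|\geq t]\leq 2e^{-t^2/2}$ passes to weak limits because $\{|\langle x,v\rangle|\geq t\}$ is closed.

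\textbf{Step 3 (symmetrization).} Finally let $\varepsilon$ be an independent uniform random sign and set $X':=\varepsilon X$. Then $X'\sim -X'$, and $|\langle X',v\rangle|=|\langle X,v\rangle|$, so $X'$ is still $1$-subgaussian. Since $T=-T$, $X'$ is still supported on $\R^n\setminus\frac1p S$, which completes the proof.
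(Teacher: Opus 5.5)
Your route is the paper's: it cites \cite[Theorem 1.2]{Dad19} for finite $T$ and says the extension follows ``by compactness'' and by mixing $X$ with $-X$, which are your Steps 1 and 3. The gap is exactly at the step you flagged. The first claim of Step 1 is false, and neither of your proposed fixes repairs it as stated.

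Take $n=1$ and $S=(-a_0,a_0)$ with $\gamma_1([-a_0,a_0])=\frac12$. The hypothesis holds, yet $[-a,a]$ with $a<a_0$ close to $a_0$ has measure at least $\frac12-\eta$ and misses $T$. Lowering the threshold in \cite{Dad19} to $c<\frac12$ is automatic but useless: that version is weaker and requires $F$ to hit \emph{more} bodies. Inflating $K$ to a body of measure $\geq\frac12$ only shows that the inflated body meets $T$, not that $K$ does. Some loss is in fact unavoidable. For $n\geq 2$ and $S$ the open centered ball of measure $\frac12$, no finite $F\subseteq T$ meets every symmetric convex body of measure $\geq\frac12$: shave tiny caps at the points of $F$ on the sphere and at their antipodes, and add a tiny symmetric bulge elsewhere.

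The missing idea is to pay for this loss with a universal dilation, which the constant $p$ absorbs. Let $K$ be symmetric convex with $\gamma_n(K)\geq\frac14$. A point $x\notin K$ yields a slab $\{|\langle y,u\rangle|\leq\langle x,u\rangle\}\supseteq K$, so $\gamma_1([-\|x\|,\|x\|])\geq\frac14$. Hence $B(0,r)\subseteq K$ for a universal $r>0$, and so $K+B(0,D)\subseteq\lambda K$ with $\lambda:=1+D/r$. By Gaussian isoperimetry, $\gamma_n(K+B(0,D))\geq\frac12$ for a universal $D$. So every such $K$ meets $T':=\frac1\lambda T$.

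After this, no contradiction argument is needed. Choose $R$ with $\gamma_n(\R^n\setminus B(0,R))\leq\frac18$, choose $\delta$ with $(1-\delta)^n\geq\frac34$, and let $F$ be a finite $\delta r$-net of the compact set $T'\cap B(0,R)$. If $\gamma_n(K)\geq\frac12$, then $(1-\delta)K\cap B(0,R)$ has measure at least $\frac14$, using $\gamma_n(tK)\geq t^n\gamma_n(K)$ for $t\leq 1$. So it contains a point of $T'$, and the nearby net point lies in $(1-\delta)K+\delta K=K$. Thus $F\subseteq\frac1\lambda T$ meets every symmetric convex body of measure $\geq\frac12$. Now \cite[Theorem 1.2]{Dad19} applies to $F$, and rescaling by $\lambda$ times its subgaussian constant gives the statement with a larger universal integer $p$. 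Then symmetrize as in your Step 3.

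Your Step 2 is superfluous, since a single finite $F$ already does the job. If you keep it, note that the tail bound passes to weak limits through the \emph{open} sets $\{|\langle x,v\rangle|>s\}$ with $s\uparrow t$. For the closed sets you invoke, the portmanteau inequality goes the wrong way.
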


\begin{proof}[Proof of Theorem \ref{thm:equivalence}]
Obviously, we have  $q_{C,n}\geq q_{C,n}'$, so it suffices to show $(ii)\implies (iii)$ and $(iii)\implies (i)$. 
We will use the usual notation  for the 1-Wasserstein distance between two random vectors $X,Y$ in $\R^n$:
$$W_1(X,Y) := \inf_{\substack{X'\sim P_X\\ Y' \sim P_Y}}\E\|X'-Y'\|$$
where the infimum is taken over all couplings of the respective probability distributions $P_X$ and $P_Y$ of $X$ and $Y$.

Assume that $(ii)$ holds, namely that ${q_{C,n}'}\leq q$ for some integer $q>0$ independent of $n$.
Let $X$ be a centered $1$-subgaussian random vector in $\R^n$. Fix some $\epsilon>0$.
Given an integer $M>0$ and $x=(x_1,...,x_M)\in \R^n\times...\times\R^n = \R^{nM}$, let $V_x$ be the random vector in $\R^{n}$ whose distribution is $\frac{1}{M}\sum_{i=1}^M \delta_{x_i}$. Set 
$$\mathfrak{X}_{n,M} :=\{ x\in \R^{nM};\quad W_1(V_x, X) \leq  \epsilon\}.$$ 
Let $G\sim \mathcal{N}(0,I_n)$. 
Set 
$$\mathfrak{G}_{n,M}:=\{x\in \R^{nM};\quad W_1(V_x, \lambda G) \leq  \epsilon \text{ for some $\lambda\in [-1,1]$}\}.$$ 
The set $\mathfrak{G}_{n,M}$ is clearly balanced and closed.
By the Glivenko-Cantelli theorem \cite{Var58}, $\gamma_{nM}(\mathfrak{G}_{n,M}) \geq 2/3$ for all $M$ large enough.
By the inequality $q_{C,n}'\leq q$, by Lemmas \ref{Steinhaus} and \ref{cor:vol sum}, and since $\mathfrak{G}_{n,M}$ is symmetric and contains $0$, there exist a universal integer $c_1>0$ and a convex body $K_{n,M}$ in $\R^{nM}$  such that
\begin{equation}\label{qcn'}
\gamma_{nM}(K_{n,M}) \geq \frac{9}{10}\quad \text{and}\quad  K_{n,M}\subset \mathfrak{G}_{n,M}+...+\mathfrak{G}_{n,M} \quad (\text{$c_1q$ times}).
\end{equation}
Let $X_1,...,X_M$ be i.i.d. copies of $X$. Then the centered random vector
$\hat{X}:=(X_1,...,X_M)$ in $\R^{nM}$ is still 1-subgaussian (use characterization $(iv)$ in \cite[Proposition 2.6.1]{Ver18}), and so by Theorem \ref{tal}, $\frac{1}{p'}\hat{X}\in K_{n,M}$
with probability at least $1/2$ for some universal integer $p'>0$. Moreover, $\hat{X}\in\mathfrak{X}_{n,M} $ with probability at least $9/10$ whenever $M$ is large enough, by the Glivenko-Cantelli theorem \cite{Var58} again.
So by the union bound, there exists a point $x\in \frac{1}{p'}\mathfrak{X}_{n,M}\cap K_{n,M}  \subset \R^{nM}.$ 
By (\ref{qcn'}), there are  some $g_1,...,g_{c_1q} \in \mathfrak{G}_{n,M}$ such that
$$x = g_1+...+g_{c_1q}.$$
By definitions of $\mathfrak{X}_{n,M}$ and $\mathfrak{G}_{n,M}$, we conclude that for all $M$ large enough, for some $\lambda_1,...,\lambda_{c_1q}\in [-1,1]$ and some standard Gaussian vectors $G_1,...,G_{c_1q}$ in $\R^n$, 
$$W_1(\frac{1}{p'}X,\lambda_1G_1+...+\lambda_{c_1q}G_{c_1q}) \leq (1+c_1q)\epsilon.$$
After letting $M\to \infty$ and $\epsilon\to 0$, we obtain by Prokhorov's compactness theorem:
$$\frac{1}{p'}X =\lambda_{0,1} G_{0,1}+...+\lambda_{0,c_1q}G_{0,c_1q}$$
for  some $\lambda_{0,1},...,\lambda_{0,c_1q}\in [-1,1]$ and some standard Gaussian random vectors $G_{0,1}, ...,G_{0,c_1q}$ in $\R^n$. 
We will see in Corollary \ref{cor:rescale} that  each $\lambda_{0,j}G_{0,j}$ is the sum of two standard Gaussian random  vectors in $\R^n$. We conclude that $q_{S,n}\leq 2p'c_1q$ and that $(iii)$ holds as wanted.

Now assume that $(iii)$ holds, namely that ${q_{S,n}}>0$ is finite and bounded independently of $n$. 
Let $A$ be a closed set in $\R^n$ such that $\gamma_n(A)\geq 2/3$. By Lemmas \ref{Steinhaus} and \ref{cor:vol sum}, for a universal integer $c_2> 2$ large enough, $A_{(c_2)}\cap (-A_{(c_2)})$ is symmetric, and its interior has Gaussian measure at least $2/3$.
So we see that we only need to show the conclusion of $(i)$ when $A$ is both open and symmetric, and $\gamma_n(A)\geq 2/3$.
Let $\delta>0$ be the universal radius given by Lemma \ref{Steinhaus}. Combining Lemmas \ref{Steinhaus} and \ref{cor:vol sum}, there is an integer $k_1>100$ depending only on $q_{S,n}'$ and $\delta$ such that 
\begin{equation} \label{gew10q}
\gamma_n(A_{(k_1)})\geq 1-\frac{1}{10q_{S,n}'}.
\end{equation}
Let $p>0$ be the universal integer given by Theorem \ref{dgln}.
Set
 \begin{equation}\label{k_0}
 k_0:=pk_1q'.
  \end{equation} 
Suppose towards a contradiction that 
$A_{(k_0)}$, which is open and symmetric by our reduction,
does not contain a convex body of Gaussian measure at least $1/2$.  
By Theorem \ref{dgln}, this assumption implies the existence of a centered $1$-subgaussian random vector $X$ in $\R^n$ with $X\sim -X$ and with support 
 \begin{equation}\label{x in} 
\supp X \subset \R^n \setminus \frac{1}{p}A_{(k_0)} \underset{(\ref{k_0})}{\subset}  \R^n \setminus A_{(k_1q')}.
 \end{equation} 
Since ${q_{S,n}}<\infty$, there are standard Gaussian random vectors $G_1,...,G_{q'}$  for some integer $q'\leq q_{S,n}$, such that 
 \begin{equation} \label{xggr}
 X= G_1+...+G_{q'}.
\end{equation} 
Then by (\ref{gew10q}) and the union bound, with positive probability, $G_j\in A_{(k_1)}$ for all $j=1,...,q'$. 
Thus, by (\ref{xggr}), there is $x\in \supp X$,  and there are $g_1,...,g_{q'}\in A_{(k_1)}$, such that
 $ x=g_1+...+g_{q_{S,n}'}$, so in particular  $x\in A_{(k_1q')},$
which is a contradiction with (\ref{x in}). This shows that $(i)$ actually holds with 
$$q_{C,n}\leq k_0=p k_1q'.$$

\end{proof}


\section{How expressive are sums of Gaussian vectors?}\label{section:expressive}

\subsection{Basic properties of sums of  Gaussian  vectors}
In this subsection, we develop some general tools for studying sums of non-independent Gaussian random vectors. In preparation for the main lemma of this subsection, we note the following:
\begin{lem}[Linear image]\label{lem:decomposition}
If $G$ is a standard Gaussian random vector in $\R^n$, then for any linear operator $A$ with $\|A\|\leq 1$,
there are two standard Gaussian random vectors $X,Y$ in $\R^n$ such that 
 $$A(G)=\frac{X+Y}{2}.$$
\end{lem}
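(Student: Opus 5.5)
Since $A(G)$ is Gaussian with covariance $AA^T$, it suffices to produce a jointly Gaussian pair $(X,Y)$ of standard Gaussian vectors in $\R^n$ such that $\frac{X+Y}{2}\sim\mathcal{N}(0,AA^T)$. The equality of laws is the meaning of ``$=$'' fixed in Subsection~\ref{sub:convention}. Write $\Sigma:=AA^T$. Then $0\preceq \Sigma\preceq \Id$ because $\|A\|\leq 1$. I will look for $(X,Y)$ jointly Gaussian, centered, with $\Cov X=\Cov Y=\Id$ and cross-covariance $\E[XY^T]=C$ for a symmetric matrix $C$ still to be chosen. For such a pair,
$$\Cov\Big(\frac{X+Y}{2}\Big)=\frac{1}{4}\big(2\Id + C + C^T\big)=\frac{\Id + C}{2}.$$
So I must choose $C:=2\Sigma-\Id$.

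The joint law exists exactly when the block matrix $\begin{pmatrix}\Id & C\\ C & \Id\end{pmatrix}$ is positive semidefinite. Since $C$ is symmetric and commutes with $\Id$, diagonalize $C$ in an orthonormal basis. The block matrix then splits into $2\times 2$ blocks $\begin{pmatrix}1&c_i\\ c_i&1\end{pmatrix}$, where the $c_i$ are the eigenvalues of $C$. Each block is PSD if and only if $|c_i|\leq 1$, that is, $-\Id\preceq C\preceq \Id$. With $C=2\Sigma-\Id$ this condition is $0\preceq\Sigma\preceq\Id$, which holds. This is the only place where the hypothesis $\|A\|\le 1$ is used.

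Concretely, take independent $G',H\sim\mathcal{N}(0,I_n)$, set $X:=G'$ and $Y:=CG'+(\Id-C^2)^{1/2}H$. Here $(\Id-C^2)^{1/2}$ is well defined because $\|C\|\leq 1$, and one checks $\Cov Y=C^2+\Id-C^2=\Id$ and $\E[XY^T]=C$. Then $\frac{X+Y}{2}$ is a centered Gaussian with covariance $\Sigma$, so it has the law of $A(G)$. If one wants the identity on the same probability space as $G$, one can instead realize $X,Y$ through a coupling in which $\frac{X+Y}{2}$ equals $A(G)$ almost surely, but equality in law is all that is needed.

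I expect no real obstacle. The only point that needs care is the positive-semidefiniteness check, which the simultaneous diagonalization above handles. The edge cases $\Sigma=0$ (take $Y=-X$) and $\Sigma=\Id$ (take $Y=X$) are already covered by the general construction.
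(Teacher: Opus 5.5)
Your proof is correct and is essentially the paper's construction written coordinate-free. The paper reduces via the singular value decomposition to a diagonal $A$ and, in each coordinate of variance $\sigma^2\le 1$, sets $X_1=X_{\sigma^2}+X_{1-\sigma^2}$ and $Y_1=X_{\sigma^2}-X_{1-\sigma^2}$; this gives exactly the jointly Gaussian pair with cross-covariance $2\sigma^2-1$, which after rotating back is your $C=2AA^T-\Id$, and your diagonalization of $C$ for the positive-semidefiniteness check plays the role of that reduction.
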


\begin{proof}
The singular value decomposition of $A$, viewed as an $n$-by-$n$ matrix, yields $A=USV^T$ where $U,V$ are orthogonal matrices and $S$ is diagonal with eigenvalues in $[0,1]$, since $\|A\|\leq 1$. 
By rotational symmetry of standard Gaussian random vectors, we can assume without loss of generality that $A=S$. 
Note that $A(G)$ is then given by a random vector of the form $(Z_1,...,Z_n)$ where $Z_i$ are independent real-valued Gaussian random variables with variance at most $1$. 
Thus, it becomes clear that to prove the lemma, it is enough to show that any real-valued Gaussian random variable with variance at most $1$ is the average of two standard  Gaussian random variables. 
Let $Z'$ be a real-valued Gaussian random variable with variance $\sigma^2\leq 1$. Let $X_1$ be a standard Gaussian real-valued random variable, which we can decompose as $X_1=X_{\sigma^2}+X_{1-\sigma^2}$
where $X_{\sigma^2},X_{1-\sigma^2}$ are independent Gaussian random variables centered at $0$ and with variances $\sigma^2$, $1-\sigma^2$ respectively.
Now, consider the random variable  $Y_1:= X_{\sigma^2}-X_{1-\sigma^2}.$
Then $Y_1$ is, like $X_1$, a standard Gaussian random variable. But observe that $\frac{1}{2}(X_1+Y_1) = X_{\sigma^2}$ has the same distribution as $Z'$. That finishes the proof.

\end{proof}

Recall the notation $\sum^q\mathcal{G}(\R^n)$ for $q$-fold Gaussian sums defined in (\ref{gaussian sum notation}).
\begin{cor}[Linear image II]\label{cor:linear bis}
Let $X\in \sum^q\mathcal{G}(\R^n)$ for some integer $q>0$.
Then, for any linear map $F:\R^n\to \R^n$ such that $\|F\|\leq 1$, $F(X)\in \sum^{2q}\mathcal{G}(\R^n)$. 
\end{cor}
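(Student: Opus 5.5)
We start from the definition: since $X\in \Sigma^q\mathcal{G}(\R^n)$, there is a coupling of standard Gaussian vectors $G_1,\dots,G_q$ in $\R^n$, all on one probability space, with $X=G_1+\dots+G_q$ in distribution. The map $F$ is linear, so
$$F(X)=F(G_1)+\dots+F(G_q)$$
in distribution. It therefore suffices to write each summand $F(G_j)$ as a sum of two standard Gaussian vectors, in a way that is compatible with the joint law of $(G_1,\dots,G_q)$.

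Apply Lemma \ref{lem:decomposition} to each $G_j$ with $A=F$. This gives standard Gaussian vectors $X_j,Y_j$ with
$$F(G_j)=\tfrac{1}{2}(X_j+Y_j).$$
Reading the proof of that lemma (after singular value decomposition, $F(G_j)$ becomes coordinatewise $X_{\sigma^2}$ built from $X_1=X_{\sigma^2}+X_{1-\sigma^2}$), one can say more. A multiple $\lambda G_j$ with $|\lambda|\le 1$ is the average of two standard Gaussians, so $2\cdot\frac{X_j+Y_j}{2}=X_j+Y_j$. Hence $2F(G_j)$ is a sum of two standard Gaussians.

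Since the target is $F(G_j)$ rather than $2F(G_j)$, we instead apply Lemma \ref{lem:decomposition} to the contraction $F/2$, whose operator norm is at most $1/2\le 1$. This gives $\frac12 F(G_j)=\frac12(X_j+Y_j)$, that is,
$$F(G_j)=X_j+Y_j$$
with $X_j,Y_j\sim\mathcal N(0,I_n)$.

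The main point to check is the coupling. The lemma only produces, for each $j$, a joint law of $(F(G_j),X_j,Y_j)$; these must be glued over $j$ compatibly with the given joint law of $(G_1,\dots,G_q)$. We do this by constructing $(X_j,Y_j)$ as a measurable function of $G_j$ together with independent auxiliary randomness. By the gluing lemma (conditional distributions), we realize, on an enlarged probability space, the triple $(G_j,X_j,Y_j)$ with the correct conditional law of $(X_j,Y_j)$ given $G_j$, independently across $j$ conditionally on $(G_1,\dots,G_q)$.

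Concretely, in the proof of the lemma one can take $X_j=F'(G_j)+W_j$ and $Y_j=F'(G_j)-W_j$, where $F'=F/2$ and $W_j$ is an independent Gaussian vector with covariance $I_n-F'^{\,T}F'$ after rotation. Indeed, $F'(G_j)$ has covariance $F'F'^{\,T}$, and $X_j$ has covariance $F'F'^{\,T}+(I-F'F'^{\,T})=I_n$ when $W_j$ has covariance $I-F'F'^{\,T}$, which is positive semidefinite since $\|F'\|\le1$.

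Summing over $j$ gives $F(X)=\sum_{j=1}^q (X_j+Y_j)$, a sum of $2q$ standard Gaussian vectors. Hence $F(X)\in\Sigma^{2q}\mathcal G(\R^n)$.
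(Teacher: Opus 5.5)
Your proof is correct and follows the paper's route: apply Lemma \ref{lem:decomposition} to $F/2$ so that each $F(G_j)=X_j+Y_j$ with $X_j,Y_j\sim\mathcal{N}(0,I_n)$, then sum over $j$. Your explicit choice $X_j=\tfrac12F(G_j)+W_j$, $Y_j=\tfrac12F(G_j)-W_j$, with $W_j\sim\mathcal{N}(0,I_n-\tfrac14FF^{T})$ independent of everything else, makes rigorous the coupling step that the paper leaves implicit. (The covariance must be $FF^{T}$ rather than $F^{T}F$, as you correct yourself a line later.)
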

\begin{proof}
It is enough to 
note that if $G\sim \mathcal{N}(0,I_n)$ and if $A:\R^n\to \R^n$ is a linear map with $\|A\|\leq 1$, then there are standard Gaussian random vectors $X,Y$ such that $A(G) = 2 \frac{A(G)}{2} =  2\frac{X+Y}{2} = X+Y$ by Lemma \ref{lem:decomposition} applied to $\frac{A}{2}$.
\end{proof}

\begin{cor}[Rescaling]\label{cor:rescale}
Let $X\in \sum^q\mathcal{G}(\R^n)$ for some integer $q>0$. For any $\tau> 0$, $\tau X\in \sum^{( \lfloor \tau \rfloor +2) q}\mathcal{G}(\R^n)$.
\end{cor}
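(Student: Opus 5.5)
Write $X=G_1+\dots+G_q$ with $G_j\sim\mathcal{N}(0,I_n)$ coupled on a common probability space. Set $m:=\lfloor\tau\rfloor+1$, so that $m\geq 1$ and $\tau/m\in(0,1]$. The plan is to split
$$\tau X=\sum_{j=1}^q \tau G_j=\sum_{j=1}^q\Big(\lfloor\tau\rfloor G_j+(\tau-\lfloor\tau\rfloor)G_j\Big),$$
and to treat the integer part and the fractional part separately.

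The integer part $\lfloor\tau\rfloor G_j$ is simply the sum of $\lfloor\tau\rfloor$ copies of the same standard Gaussian vector $G_j$, all equal almost surely. This is a legitimate coupling, so it is trivially a sum of $\lfloor\tau\rfloor$ standard Gaussian vectors.

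For the fractional part, set $\lambda:=\tau-\lfloor\tau\rfloor\in[0,1)$ and apply Corollary \ref{cor:linear bis} with $q=1$ to the linear map $F=\lambda\,\Id$, which satisfies $\|F\|\leq1$. This gives standard Gaussian vectors $X_j,Y_j$ with $\lambda G_j=X_j+Y_j$ in distribution. The one technical point is that this identity holds only in law, while we need $X_j,Y_j$ jointly coupled with $G_j$, and hence with the whole family $(G_1,\dots,G_q)$. I would handle this by going back to the explicit construction in the proof of Lemma \ref{lem:decomposition}. There $X_j$ and $Y_j$ are built from $G_j$ together with an independent auxiliary Gaussian, so they are measurable functions of $G_j$ and extra independent randomness. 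Equivalently, one can glue the couplings using the conditional distribution of $(X_j,Y_j)$ given $X_j+Y_j$. Either way, after enlarging the probability space, all the vectors live on one space and the sum equals $\tau X$ in distribution.

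Collecting terms, each $j$ contributes $\lfloor\tau\rfloor+2$ standard Gaussian vectors, so $\tau X\in\Sigma^{(\lfloor\tau\rfloor+2)q}\mathcal{G}(\R^n)$. The count matches the statement exactly, and the only step requiring care is the gluing just described.
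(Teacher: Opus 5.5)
Your proof is correct and is essentially the paper's argument: split $\tau X=\lfloor\tau\rfloor X+(\tau-\lfloor\tau\rfloor)X$ and apply Corollary \ref{cor:linear bis} to the fractional part, which costs $2q$ standard Gaussians on top of the $\lfloor\tau\rfloor q$ from the integer part. Your explicit gluing via the construction in Lemma \ref{lem:decomposition}, which makes $X_j,Y_j$ functions of $G_j$ and independent noise, fills in a coupling detail the paper leaves implicit; the auxiliary $m$ you define is never used and can be dropped.
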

\begin{proof}
Use Corollary \ref{cor:linear bis} and the fact that
$\tau X = \lfloor \tau \rfloor X
+ (\tau- \lfloor \tau \rfloor)X$.
\end{proof}

Next, we state the main lemma of this subsection. It will be especially useful when combined with the contraction theorems of Caffarelli (Theorem \ref{caf}) and Bobkov (Theorem \ref{thm:bobkov}).

\begin{lem}[Lipschitz image] \label{lem:average of gaussians}
Let $\Psi:\R^n\to \R^n$ be a Lipschitz map with Lipschitz constant at most $C_{\mathrm{Lip}}>0$, and let $G$ be a standard Gaussian random vector in $\R^n$.
Then there are two standard Gaussian random vectors $X,Y$ in $\R^n$ such that 
 $$\Psi(G)-\E[\Psi(G)]=C_{\mathrm{Lip}}\frac{X+Y}{2}.$$

\end{lem}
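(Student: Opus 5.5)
By rescaling $\Psi$ we may assume $C_{\mathrm{Lip}}=1$. The plan is to write $\Psi(G)-\E[\Psi(G)]$ as a stochastic integral against Brownian motion whose integrand is a contraction at every time. We then complete that integrand to an isometry, the same way Lemma \ref{lem:decomposition} writes a Gaussian of variance $\sigma^2\le 1$ as $\frac12\big((X_{\sigma^2}+X_{1-\sigma^2})+(X_{\sigma^2}-X_{1-\sigma^2})\big)$.

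\textbf{Step 1 (martingale representation).} Let $B$ be a standard Brownian motion in $\R^n$, so that $G\sim B_1$. Define the heat-semigroup interpolation
$$u(t,x):=\E[\Psi(x+B_1-B_t)],\qquad t\in[0,1],\ x\in\R^n.$$
For $t<1$ the map $u(t,\cdot)$ is a Gaussian convolution of $\Psi$, so it is smooth. It is also $1$-Lipschitz, since it is an average of translates of $\Psi$. Hence its Jacobian $H_t:=D_xu(t,B_t)$ satisfies $\|H_t\|\le 1$. Moreover $u$ solves the backward heat equation $\partial_t u+\frac12\Delta u=0$. Applying It\^{o}'s formula on $[0,1-\epsilon]$ and letting $\epsilon\to0$ gives
$$\Psi(B_1)-\E[\Psi(B_1)]=\int_0^1 H_t\,dB_t,$$
using $u(0,0)=\E\Psi(G)$ and $u(1-\epsilon,B_{1-\epsilon})\to\Psi(B_1)$ in $L^2$, which holds since $\Psi$ is Lipschitz. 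The process $H_t$ is adapted and continuous on $[0,1)$.

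\textbf{Step 2 (completion to an isometry).} Enlarge the probability space by an independent Brownian motion $B'$ in $\R^n$. Set $K_t:=(\Id-H_tH_t^T)^{1/2}$. This is well defined because $\|H_t\|\le1$, and it is adapted and measurable because it is a continuous function of $H_t$. Define
$$X:=\int_0^1 H_t\,dB_t+\int_0^1 K_t\,dB'_t,\qquad Y:=\int_0^1 H_t\,dB_t-\int_0^1 K_t\,dB'_t .$$
The martingale $M_s=\int_0^s H\,dB\pm\int_0^s K\,dB'$ has quadratic covariation $\int_0^s(H_tH_t^T+K_tK_t^T)\,dt=s\,\Id$. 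By Lévy's characterization, $M$ is a standard Brownian motion in $\R^n$, so $X$ and $Y$ are both $\mathcal N(0,I_n)$. Finally, $\frac{X+Y}{2}=\int_0^1 H_t\,dB_t=\Psi(G)-\E\Psi(G)$ in distribution, which is the claim.

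\textbf{Main obstacle.} The only delicate point is the justification of Step 1 near $t=1$, where $u(t,\cdot)$ need not be differentiable at $t=1$. The uniform bound $\|H_t\|\le1$ handles this: it makes $\int_0^1 H\,dB$ an $L^2$ limit, and the terminal convergence follows from the Lipschitz bound. As an alternative, we could smooth $\Psi$ by convolution, prove the statement for smooth $\Psi$, and pass to the limit. This works because the set of laws of $\frac{X+Y}{2}$ with $X,Y\sim\mathcal N(0,I_n)$ is closed under weak convergence, by tightness of couplings with fixed Gaussian marginals and Prokhorov's theorem.
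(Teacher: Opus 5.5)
Your proof is correct and follows the paper's approach. Both start from the It\^{o} representation $\Psi(B_1)-\E[\Psi(B_1)]=\int_0^1 \nabla(P_{1-t}\Psi)(B_t)\,dB_t$ with a contractive integrand, and then write each contraction as the average of two isometries, $H\,dB=\tfrac12\big[(H\,dB+K\,dB')+(H\,dB-K\,dB')\big]$, which is the matrix form of Lemma \ref{lem:decomposition}. The only difference is technical. The paper discretizes the integral into Riemann sums $\sum_j A_j(G_{N,j})$, applies Lemma \ref{lem:decomposition} conditionally on the past at each step, and passes to the limit via Prokhorov. You instead do the completion in continuous time, with an auxiliary Brownian motion and L\'evy's characterization; this avoids the discretization and limiting argument and even gives an almost sure identity.
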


\begin{proof}
Our proof relies on  It\^{o}'s formula,  in a way inspired by the proof of \cite[Theorem 1.5]{Pis16}.
By rescaling $\Psi$ by a factor $\frac{1}{C_{\mathrm{Lip}}}$, it is enough to show the statement when $\Psi$ is 1-Lipschitz.
Let $\{B_t\}$ be the standard Brownian motion on $\R^n$ starting at $0$. Note that $\Psi(G)-\E[\Psi(G)]$ has same probability distribution as $\Psi(B_1)-\E[\Psi(B_1)].$
By It\^{o}'s formula  \cite[Theorem 3.3]{Rev99}, we have 
$$\Psi(B_1) -\E[\Psi(B_1)] = \int_0^1 \nabla(P_{1-t}\Psi)(B_t).dB_t.$$
Here, $\{P_t\}_{t\geq 0}$ is the heat semigroup, and if $\Psi$ is the vector-valued map $(\Psi_1,...,\Psi_n)$, then $P_{1-t}\Psi$ denotes the vector-valued map $(P_{1-t}\Psi_1,...,P_{1-t}\Psi_n)$ and $\nabla(P_{1-t}\Psi)(x)$ is the linear map whose matrix has $i$-th row equal to $(\frac{\partial}{\partial x_1} P_{1-t}\Psi_i(x), ..., \frac{\partial}{\partial x_n} P_{1-t}\Psi_i(x))$.
Since $\Psi$ is 1-Lipschitz, $P_{1-t}\Psi$ is also 1-Lipschitz and so 
$\|\nabla(P_{1-t}\Psi)\| \leq 1.$
This means by  \cite[Proposition 2.13]{Rev99} that the random variable $\Psi(G)-\E[\Psi(G)]$ is the limit in probability of random variables of the following form:
$$\sum_{j=1}^N A_j(G_{N,j})$$
where $G_{N,j}\sim \mathcal{N}(0,\frac{1}{N}I_n)$ are independent and for each $j\geq 1$, $A_j$ is a certain random linear operator with operator norm at most 1 depending only on $G_{N,1},...,G_{N,j-1}$.
By Lemma \ref{lem:decomposition}, for each $j=1,...,N$, conditioned on $A_j$ (which only depends on $G_{N,k}$ where $k=1,...,j-1$),
$$A_j(G_{N,j}) = \frac{X_{N,j}+Y_{N,j}}{2}$$
where $X_{N,j}, Y_{N,j} \sim \mathcal{N}(0,\frac{1}{N}I_n)$. 
Since the latter holds no matter what the values of $G_{N,k}$ are for $k=1,...,j-1$, this gives well-defined random vectors $X_{N,1},...,X_{N,N}\sim \mathcal{N}(0,\frac{1}{N}I_n)$ which are independent, and similarly random vectors $Y_{N,1},...,Y_{N,N}\sim \mathcal{N}(0,\frac{1}{N}I_n)$ which are independent.
Summing in $j$, we get 
$$\sum_{j=1}^N A_j(G_{N,j}) = \frac{X_N+Y_N}{2}$$
where $X_N:=\sum_{j=1}^N X_{N,j}$ and $Y_N:=\sum_{j=1}^N Y_{N,j}$
satisfy $X_N,Y_N \sim \mathcal{N}(0,I_n)$.
By Prokhorov's compactness theorem, since $\Psi(G)-\E[\Psi(G)]$ is the limit in distribution of $\sum_{j=1}^N A_j(G_{N,j})$, $\Psi(G)-\E[\Psi(G)]\sim \frac{X+Y}{2}$
where $X,Y \sim \mathcal{N}(0,I_n)$.

\end{proof}

\begin{cor}[Bounded support]\label{cor:bdd support}
For any centered random vector $X$ in $\R^n$ with $\|X\|\leq 1$ almost surely, we have $X\in \sum^5\mathcal{G}(\R^n)$.
\end{cor}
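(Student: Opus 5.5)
The plan is to write $X=(X+G)+(-G)$, where $G\sim\mathcal N(0,I_n)$ is independent of $X$, and to show that $X+G\in\Sigma^4\mathcal G(\R^n)$. The term $-G$ is itself a standard Gaussian vector, so this gives $X\in\Sigma^5\mathcal G(\R^n)$.

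\textbf{Step 1.} The law of $X+G$ is $\mu*\gamma_n$, where $\mu$ is the law of $X$ and is supported in the closed unit ball. I will use the transport results mentioned in the introduction, namely the Brownian transport map / heat-flow map of Mikulincer--Shenfeld \cite{Mik24,Mik23}, in the spirit of Kim--Milman. They say that if $\mu$ is supported in a ball of radius $R$, there is a map $\Psi:\R^n\to\R^n$ with $\Psi_\#\gamma_n=\mu*\gamma_n$ and Lipschitz constant at most $e^{R^2/2}$. With $R=1$ this gives $C_{\mathrm{Lip}}\le \sqrt e<2$. Thus $X+G$ has the law of $\Psi(G')$ for some $G'\sim\mathcal N(0,I_n)$, and $\E[\Psi(G')]=\E[X]+\E[G]=0$.

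\textbf{Step 2.} Lemma \ref{lem:average of gaussians} gives standard Gaussian vectors $Y_1,Y_2$ with
$$X+G=\Psi(G')-\E[\Psi(G')]=\frac{C_{\mathrm{Lip}}}{2}Y_1+\frac{C_{\mathrm{Lip}}}{2}Y_2 .$$
Since $c:=C_{\mathrm{Lip}}/2\le 1$, each $cY_i$ is the image of a standard Gaussian vector under the linear map $c\,\Id$, which has norm at most $1$. By the proof of Corollary \ref{cor:linear bis} (that is, Lemma \ref{lem:decomposition} applied to $\frac{c}{2}\Id$), each $cY_i$ is a sum of two standard Gaussian vectors. These decompositions are carried out on the common probability space, after enlarging it as needed.

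\textbf{Step 3.} Combining the previous steps, $X+G$ is a sum of four standard Gaussian vectors. Then $X=(X+G)+(-G)$ is a sum of five.

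\textbf{Main obstacle.} The delicate point is Step 1: one needs a Lipschitz transport map from $\gamma_n$ onto $\mu*\gamma_n$ whose constant is dimension-free and strictly at most $2$. The fallback is the heat-flow (Kim--Milman) estimate $\|\nabla\Psi\|\le e^{R^2/2}$ for Gaussian convolutions of measures supported in a ball of radius $R$. If only a worse constant $C$ were available, Corollary \ref{cor:rescale} would still give a universal number of summands, but not the number $5$.
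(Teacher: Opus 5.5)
Your proposal is correct and is essentially the paper's proof. The paper writes $X=(X-G)+G$ and obtains $X-G=F(G')$ from Mikulincer--Shenfeld, then applies Lemma \ref{lem:average of gaussians} and Corollary \ref{cor:rescale} to each of the two rescaled Gaussians to get four summands, plus one more for $G$. The only difference is that the paper uses the Brownian-transport constant $\sqrt{(e^2-1)/2}\approx 1.79$, while you use the heat-flow bound $e^{1/2}$. Both are at most $2$, which is all the argument needs.
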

\begin{proof}
Let $G$ be a standard Gaussian random vector in $\R^n$ independent of $X$. By \cite[Theorem 1.3]{Mik24} (see also \cite[Theorem 2]{Mik23}) and by using that $-G_0\sim \mathcal{N}(0,1)$ whenever $G_0\sim \mathcal{N}(0,1)$, we obtain $X-G=F(G')$ where $F:\R^n\to \R^n$ is $\frac{\sqrt{e^2-1}}{\sqrt{2}}$-Lipschitz and $G'$ is a standard Gaussian random vector. Lemma \ref{lem:average of gaussians} implies that $X-G = \frac{\sqrt{e^2-1}}{2\sqrt{2}} (G_1+G_2)$ where $G_1,G_2$ are   standard Gaussian random vectors. We conclude with Corollary \ref{cor:rescale} and the fact that $\frac{\sqrt{e^2-1}}{2\sqrt{2}} \leq 1$.   
\end{proof}

We record without proof an elementary fact:
\begin{lem}[Local-to-global]\label{lem:local global}
Let $q>0$ be an integer. Consider a random vector $X$ in $\R^n$ and a discrete random variable $\mu$.
If for any $t$ such that $\mathbb{P}[\{\mu=t\}]>0$, $X$ conditioned on $\{\mu=t\}$  belongs to $\sum^q\mathcal{G}(\R^n)$, then $X\in \sum^q\mathcal{G}(\R^n)$.
\end{lem}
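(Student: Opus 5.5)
The plan is to glue the conditional couplings together, treating each value of $\mu$ separately. Let $T:=\{t:\ \mathbb{P}[\mu=t]>0\}$. This set is countable, and since $\mu$ is discrete, $\sum_{t\in T}\mathbb{P}[\mu=t]=1$. The law of $X$ is therefore the mixture
$$P_X=\sum_{t\in T}\mathbb{P}[\mu=t]\,P_{X\mid \mu=t},$$
where $P_{X\mid\mu=t}$ is the law of $X$ conditioned on $\{\mu=t\}$.

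First, for each $t\in T$, the hypothesis provides a coupling $\pi_t$ on $(\R^n)^q$ whose $q$ marginals are all $\mathcal{N}(0,I_n)$, and such that the pushforward of $\pi_t$ under the summation map $s(y_1,\dots,y_q)=y_1+\dots+y_q$ equals $P_{X\mid\mu=t}$.

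Next, define the mixture $\pi:=\sum_{t\in T}\mathbb{P}[\mu=t]\,\pi_t$. This is a probability measure on $(\R^n)^q$, because it is a countable convex combination of probability measures.

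Then compute the marginals. Taking marginals commutes with countable convex combinations, so the $j$-th marginal of $\pi$ is $\sum_t \mathbb{P}[\mu=t]\,\mathcal{N}(0,I_n)=\mathcal{N}(0,I_n)$. Likewise, pushing forward by $s$ is linear in the measure, so $s_\#\pi=\sum_t \mathbb{P}[\mu=t]\,P_{X\mid\mu=t}=P_X$.

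Finally, realize $\pi$ by a random vector $(G_1,\dots,G_q)$. Each $G_j$ is then standard Gaussian and $G_1+\dots+G_q\sim X$, so $X\in\sum^q\mathcal{G}(\R^n)$. Concretely, one can sample an independent copy of $\mu$ and then draw from $\pi_t$ given the value $t$.

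No step here is a real obstacle. The only points to check are measure-theoretic: the countability of $T$, which makes the mixture a legitimate probability measure, and the linearity of marginals and pushforwards under countable mixtures. Both are immediate.
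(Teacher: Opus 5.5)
Your proof is correct. Mixing the conditional couplings $\pi_t$ with weights $\mathbb{P}[\mu=t]$ is exactly the elementary argument the paper has in mind; the paper records this lemma without proof. Your closing remark (sample $\mu$ first, then draw from $\pi_t$) also produces the Gaussians jointly with $\mu$. That joint form is what the paper implicitly uses later, in Lemma \ref{bessel} and Corollary \ref{cor:bdd support II}.
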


\begin{cor}[Bounded support II]\label{cor:bdd support II}
Let $q>0$, $C_0>0$ be two  integers. 
Consider a  random vector $X$ in $\R^n$ and a discrete random variable $\mu$. Given $t$ such that $\mathbb{P}[\{\mu=t\}]>0$, let $Z_t$ be $X$ conditioned on $\{\mu=t\}$.
If $Z_t-\E[Z_t]\in \sum^q\mathcal{G}(\R^n)$ and  $\|\E[Z_t]\|\leq C_0$ for all $t$, then $X-\E[X]\in \sum^{q+10C_0}\mathcal{G}(\R^n)$.
\end{cor}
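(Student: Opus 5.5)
Write $X-\E[X] = (Z_\mu - \E[Z_\mu]) + (\E[Z_\mu]-\E[X])$, where, conditioned on $\{\mu=t\}$, the first term is $Z_t-\E[Z_t]$ and the second term is the deterministic vector $m_t-m$. Here $m_t:=\E[Z_t]$ and $m:=\E[X]=\sum_t \mathbb{P}[\mu=t]\,m_t$.

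We handle the two pieces separately and then glue them together with one coupling.

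\emph{First piece.} By hypothesis, for each $t$ we can write $Z_t-m_t=G^t_1+\dots+G^t_q$ with standard Gaussians $G^t_i$. Choose these couplings conditionally on $\{\mu=t\}$, independently of everything else given $\mu$. Then each $G_i:=G^\mu_i$ is, after unconditioning, still $\mathcal{N}(0,I_n)$, exactly as in Lemma \ref{lem:local global}.

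\emph{Second piece.} Consider the random vector $W:=m_\mu-m$, which is a function of $\mu$. It is centered, since $\E[m_\mu]=m$. It also satisfies $\|W\|\leq \|m_\mu\|+\|m\|\leq 2C_0$ almost surely. So $W/(2C_0)$ is centered with norm at most $1$, and Corollary \ref{cor:bdd support} gives $W/(2C_0)\in \sum^5\mathcal{G}(\R^n)$. By Corollary \ref{cor:rescale} with $\tau=2C_0$ (an integer, so $\lfloor\tau\rfloor+2=2C_0+2$), we get $W\in\sum^{5(2C_0+2)}\mathcal{G}(\R^n)$.

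This count is slightly too large: $10C_0+10$ rather than $10C_0$. We should avoid paying the $+2$ when $\tau$ is an integer. Writing $W=\sum_{j=1}^{2C_0} W/(2C_0)$ directly, with the same coupling for each copy, expresses $W$ as $2C_0$ copies of a $5$-fold sum, hence as a $10C_0$-fold sum. So $W\in\sum^{10C_0}\mathcal{G}(\R^n)$, realized as $W=H_1+\dots+H_{10C_0}$, where the $H_k$ are standard Gaussians jointly coupled with $W$.

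\emph{Gluing.} We need the two decompositions on a common probability space. Realize the second one first: sample $(W,H_1,\dots,H_{10C_0})$, and then sample $\mu$ from its conditional law given $W$. Since $W=m_\mu-m$ is a function of $\mu$, this is the disintegration of the joint law of $(\mu,W)$. Then, given $\mu=t$, sample $(Z_t,G^t_1,\dots,G^t_q)$ as above. The resulting vector $(Z_\mu-m_\mu)+W$ has the law of $X-m$: conditioned on $\mu=t$, it is $Z_t-m_t+m_t-m=Z_t-m$, and the law of $\mu$ is correct.

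Each $G^\mu_i$ is standard Gaussian because its conditional law given $\mu$ is always $\mathcal{N}(0,I_n)$, and each $H_k$ is standard Gaussian by construction. Hence
\[
X-\E[X]=\sum_{i=1}^q G^\mu_i+\sum_{k=1}^{10C_0}H_k\in\textstyle\sum^{q+10C_0}\mathcal{G}(\R^n).
\]

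The main point of care is this gluing step. We must check that the conditional construction preserves the marginals of all the summands while the two sums share the common randomness $\mu$. The disintegration argument above handles this: the $G_i$ only need their conditional laws given $\mu$ to be Gaussian, and the $H_k$ are built jointly with $W=m_\mu-m$ before $\mu$ is refined.
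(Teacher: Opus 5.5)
Your proof is correct and follows the paper's argument: you use the same splitting $X-\E[X]=(Z_\mu-m_\mu)+(m_\mu-m)$, Lemma \ref{lem:local global} for the first term, and Corollary \ref{cor:bdd support} applied to $(m_\mu-m)/(2C_0)$ for the second, with the integer-multiple trick giving exactly $10C_0$ Gaussians. Your disintegration step, which puts both decompositions on one probability space sharing $\mu$, makes explicit a gluing point the paper leaves implicit.
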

\begin{proof}
Consider the indicator function $\mathbf{1}_{\{\mu =t\}}$.
We have
$$X-\E[X]= \sum_{t} \mathbf{1}_{\{\mu =t\}} (Z_t-\E[Z_t]) +\sum_{t} \mathbf{1}_{\{\mu =t\}} \E[Z_t] -\E[X].$$ By assumption on $Z_t$ and Lemma \ref{lem:local global},  $\sum_{t} \mathbf{1}_{\{\mu =t\}} (Z_t-\E[Z_t]) \in \sum^q\mathcal{G}(\R^n)$. By Corollary \ref{cor:bdd support}, since $\|\E[Z_t]-\E[X]\|\leq 2C_0$, we have $\sum_{t} \mathbf{1}_{\{\mu =t\}} \E[Z_t] -\E[X] \in \sum^{10C_0}\mathcal{G}(\R^n)$. 
\end{proof}

The next lemma says that a sum of Gaussian random vectors with small covariance can be rewritten as a sum of standard Gaussian random vectors:

\begin{lem}[Normalization] \label{lem:normalization}
For any integer $q\geq 3$, any standard Gaussian vectors $G_1,...,G_q$ in $\R^n$ and any  $\tau_1,...,\tau_q\in (0,\frac{1}{\sqrt{2}})$, 
$$\sum_{i=1}^q\tau_iG_i \in  \text{${\sum}^q\mathcal{G}(\R^n)$}.$$
\end{lem}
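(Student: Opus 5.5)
The plan is to add to each summand $\tau_iG_i$ an independent Gaussian correction that completes it to a standard Gaussian, and to arrange that the corrections cancel in the sum. Set $\sigma_i:=\sqrt{1-\tau_i^2}$. Since $\tau_i\in(0,\frac{1}{\sqrt 2})$, we have $\sigma_i\in(\frac{1}{\sqrt2},1)$. Suppose we can find random vectors $H_1,\dots,H_q\sim\mathcal N(0,I_n)$ satisfying two conditions:
\begin{itemize}
\item the family $(H_1,\dots,H_q)$ is independent of the family $(G_1,\dots,G_q)$;
\item $\sum_{i=1}^q\sigma_iH_i=0$ almost surely.
\end{itemize}
Then put $G_i':=\tau_iG_i+\sigma_iH_i$. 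For each fixed $i$, $G_i$ and $H_i$ are independent standard Gaussians and $\tau_i^2+\sigma_i^2=1$, so $G_i'\sim\mathcal N(0,I_n)$. Moreover
$$\sum_{i=1}^q G_i'=\sum_{i=1}^q\tau_iG_i+\sum_{i=1}^q\sigma_iH_i=\sum_{i=1}^q\tau_iG_i,$$
which would give $\sum_i\tau_iG_i\in\sum^q\mathcal G(\R^n)$. Only the marginal law of each $G_i'$ matters, so the couplings among the $G_i'$ are irrelevant.

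It remains to build the $H_i$, and this is where $q\ge 3$ and the bound on $\tau_i$ enter.

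\emph{Reduction to planar unit vectors.} Suppose there are unit vectors $u_1,\dots,u_q\in\R^2$ with $\sum_i\sigma_iu_i=0$. Take $g=(g^{(1)},\dots,g^{(n)})$ with each $g^{(k)}$ an independent standard Gaussian vector in $\R^2$, all independent of the $G_i$. Define $H_i$ coordinatewise by $(H_i)_k:=\langle u_i,g^{(k)}\rangle$. Since $\|u_i\|=1$, each coordinate is standard Gaussian and the coordinates are independent, so $H_i\sim\mathcal N(0,I_n)$. By linearity, $\sum_i\sigma_iH_i=0$.

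\emph{Existence of the $u_i$.} This is the polygon condition: positive lengths $\sigma_1,\dots,\sigma_q$ close up into a polygon exactly when $\sigma_i\le\sum_{j\ne i}\sigma_j$ for every $i$. Here, using $q\ge3$ and $\sigma_j>\frac1{\sqrt2}$,
$$\sum_{j\ne i}\sigma_j>(q-1)\tfrac{1}{\sqrt2}\ge\sqrt2>1>\sigma_i.$$
For a self-contained justification in the plane, one can proceed by induction on $q$. Alternatively, for $q=3$ it is the triangle inequality. For $q>3$, group the summands into three blocks with near-balanced sums, each block realized by parallel vectors. Since every $\sigma_j<1$ and their total exceeds $2$, one can choose the blocks so that each block sum is at most half the total, and then apply the triangle case to the three block sums.

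The only step requiring any care is this polygon-closure step. It is exactly where the hypotheses $q\ge3$ and $\tau_i<\frac1{\sqrt2}$ are used: they guarantee $\sigma_i>\frac{1}{\sqrt 2}$, which makes the polygon inequality hold.
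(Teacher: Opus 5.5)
Your proof is correct, and its skeleton matches the paper's: add to each $\tau_iG_i$ an independent Gaussian correction of variance $1-\tau_i^2$, chosen so that the corrections sum to zero. The difference is in how the zero-sum family is built.

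The paper first reduces to $q\in\{3,4,5\}$ by splitting the sum into blocks of $3$, $4$ or $5$ terms. For $q=3$ it writes an explicit coupling from independent pieces ($W_1=W_1'+W_1''$, $W_2=-W_1'+W_2''$, $W_3=-(W_1+W_2)$). That coupling needs the triangle inequalities for the variances $1-\tau_i^2$, and the cases $q=4,5$ are only asserted to be similar.

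You instead handle every $q\ge 3$ at once. You close a planar polygon with sides $\sigma_i=\sqrt{1-\tau_i^2}$ and project a $2$-dimensional Gaussian field onto the directions $u_i$. For $q=3$ the two constructions give the same Gaussian family, since the zero-sum constraint pins down the coordinatewise covariances. Your version, however, avoids the grouping step and the omitted cases. It also isolates the exact condition this completion method needs: the polygon inequality for the standard deviations $\sigma_i$. That condition is necessary by the triangle inequality in $L^2$, and it is weaker than the variance inequalities used by the paper's coupling.

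The one step you state loosely is the three-block claim. It follows by repeatedly merging the two smallest lengths. With at least four lengths present, the two merged ones are dominated by two distinct other lengths. So the merged length is at most half the total, and every length stays at most half the total until three blocks remain.
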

\begin{proof}
Note that, since a sum of $q\geq 3$ terms can be decomposed into a sum of terms which are sums of 3 or 4 or 5 terms,
the lemma holds if it holds for $q\in \{3,4,5\}$.  
Recall that if $G_a\sim \mathcal{N}(0,aI_n)$, $G_b\sim \mathcal{N}(0,bI_n)$ are independent then $G_a+G_b\sim \mathcal{N}(0,(a+b)I_n)$. 
Thus, to prove the lemma, it suffices to show that  for any $q\in \{3,4,5\}$ and any $\tau_i\in (0,\frac{1}{\sqrt{2}})$ where $i\in \{1,...,q\}$, there are random vectors $W_i\sim \mathcal{N}(0,(1-\tau_i^2)I_n)$ such that $\sum_{i=1}^q W_i =0$. 
We explain the case $q=3$ only, since the other cases are similar (and since we will only need that case later). Fix $a:= \frac{1}{2}((1-\tau_1^2)+(1-\tau_2^2) - (1-\tau_3^2))$. Note that  $a\geq 0$, $(1-\tau_1^2)-a\geq 0$  and $(1-\tau_2^2)-a\geq 0$ since $\tau_i\leq \frac{1}{\sqrt{2}}$ for $i\in \{1,2,3\}$. Consider $W_1=W_1'+W_1''$, and $W_2=W_2'+W_2''$ where for $i\in \{1,2\}$, $W_i'$ and $W_i''$ are independent,  $W_i'\sim \mathcal{N}(0,aI_n)$ and $W_i''\sim \mathcal{N}(0,((1-\tau_i^2)-a) I_n)$, $W''_1$ and $W''_2$ are independent, and  moreover $W_1'=-W_2'$. Then $W_1\sim \mathcal{N}(0,(1-\tau_1^2) I_n)$, $W_2\sim \mathcal{N}(0,(1-\tau_2^2) I_n)$, $W_1+W_2 = W_1''+W_2''\sim \mathcal{N}(0,(1-\tau_3^2) I_n)$. Set $W_3:=-(W_1+W_2)$ so that $W_3 \sim \mathcal{N}(0,(1-\tau_3^2) I_n)$, and the case $q=3$ is checked. 
 
\end{proof}



\subsection{Real-valued subgaussian random variables}

Consider the standard normal density $\varphi:\R\to \R$, defined as
\begin{equation}\label{varphi}
\varphi(x):= \frac{1}{\sqrt{2\pi}} \exp(-\frac{x^2}{2}).
\end{equation}
The following lemma is technically a key piece of the proof of Theorem \ref{thm:1d}.
\begin{lem}[Density bounds]\label{lem:density bound} 
For some universal $\kappa\in (0,1)$, if $S$ is a real-valued $\kappa$-subgaussian variable, then there is a standard Gaussian random variable $G$ such that $S+G$ has distribution $f(x)dx$ where  $f:\R\to \R$ is a piecewise continuous function that satisfies
$$C^{-1} \varphi(x-y_0)\leq f(x) \leq C\varphi(x-y_0)$$
for some $y_0\in [-1,1]$ depending on $S$ and for some  universal constant $C\geq 1$.  
\end{lem}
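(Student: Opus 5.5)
The plan is to build the coupling by prescribing the conditional law of $G$ given $S$, rather than using the independent coupling, and then read off the density of $S+G$ explicitly. Write $\mu$ for the law of $S$ and $y_0:=\E[S]$, so $|y_0|\le O(\kappa)\le 1$. The independent coupling gives density $\int \varphi(x-s)\,\mu(ds)=\varphi(x)\,\E[e^{xS-S^2/2}]$. The factor $e^{xS}$ is not comparable to $e^{xy_0}$ uniformly in $x$, since $\E e^{xS}$ can be as large as $e^{\kappa^2x^2/2+xy_0}$. So the coupling has to pair large values of $|G|$ with values of $S$ that keep the product $GS$ under control. To do this I would look for a kernel $k_s(g)=\varphi(g)\,w(s,g)$ with
$$\int w(s,g)\,\mu(ds)=1 \ \text{ for every } g \qquad\text{and}\qquad \int w(s,g)\,\varphi(g)\,dg=1 \ \text{ for } \mu\text{-a.e. } s .$$
The first condition makes $G\sim\mathcal N(0,1)$ and the second makes $k_s$ a probability density. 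Then $S+G$ has density
$$f(x)=\int \varphi(x-s)\,w(s,x-s)\,\mu(ds)=\varphi(x)\int e^{xs-s^2/2}\,w(s,x-s)\,\mu(ds).$$

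The natural choice of weight is an exponential tilt, $w(s,g)\approx e^{-g(s-y_0)}$ up to normalizations depending on $s$ and on $g$. This cancels the dangerous factor $e^{x(s-y_0)}$ and leaves $f(x)\approx\varphi(x)e^{xy_0}\int e^{O(s^2)}\,\mu(ds)$. Since $\kappa$ is small, $\E e^{O(S^2)}=O(1)$, so this is comparable to $\varphi(x-y_0)$. The two normalization constraints cannot both hold exactly for a pure tilt, because the $g$-normalizer $Z(g)=\E e^{-g(S-y_0)}$ grows like $e^{\kappa^2g^2/2}$. My plan is therefore to split first.

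Truncate $\mu$ into a bulk part on $\{|s-y_0|\le 1\}$ and dyadic tail shells $\{2^{k}<|s-y_0|\le 2^{k+1}\}$. These shells carry mass at most $2e^{-4^{k}/(2\kappa^2)}$ by subgaussianity.
\begin{itemize}
\item \textbf{Bulk.} Solve the two marginal constraints by a Sinkhorn/iterative-scaling (Schr\"odinger bridge) construction with reference kernel $e^{-g(s-y_0)}$ restricted to bounded $s$. Here the normalizers are uniformly comparable to constants on the relevant range, so the resulting $f$ is within a factor $C$ of $\varphi(x-y_0)$ on this portion.
\item \textbf{Each tail shell.} Pair it with a region of $G$ of the same tiny Gaussian mass. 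I would choose $G$ roughly $\approx -(s-y_0)$ plus an independent bounded-variance Gaussian, so that $S+G$ lands near $y_0$ with a Gaussian-shaped profile. Its contribution to $f$ is then at most $e^{-c4^k/\kappa^2}$ times a Gaussian density centered near $y_0$, which is dominated by the bulk's lower bound once $\kappa$ is small.
\end{itemize}
The portion of $G$'s law removed for the tails is exponentially small. It is removed from regions where the bulk coupling has slack, and $\varphi$ is recovered by a small correction.

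The upper bound on $f$ then comes from the bulk estimate plus the summable tail contributions. The lower bound comes from the bulk alone, using Jensen on the tilt: $\E[e^{x(S-y_0)}\mid \text{bulk}]\ge e^{x\cdot o(1)}$, with the bulk's conditional mean shift absorbed into $y_0$. Piecewise continuity follows because each piece of the construction has a piecewise continuous density.

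The main obstacle is the bulk step: showing that the marginal-constrained tilted kernel exists with normalizers $Z(g)$ bounded above and below uniformly in $g\in\R$, not only for $|g|\lesssim 1/\kappa$. For large $|g|$ the tilt $e^{-g(s-y_0)}$ concentrates on the endpoints of the bulk support, and the resulting distortion of the density has to be bounded. My first attempt would be to replace the linear tilt by a saturated one, $e^{-\psi(g)(s-y_0)}$ with $\psi(g)=g$ for $|g|\le 1/\kappa$ and $\psi$ constant beyond. I would then check that for $|x|\ge1/\kappa$ the leftover factor $e^{(x-\psi)(s-y_0)}$ stays bounded. This uses that the bulk conditional law, after the coupling, has spread $O(1/|x|)$ around $y_0$. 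If that fails, the fallback is to make the bulk coupling itself scale-dependent: at Gaussian level $|g|\sim 2^j$, pair $G$ with $S$ restricted to a window of width $2^{-j}$ around $y_0$. The mass needed for these windows is supplied by subgaussian concentration.
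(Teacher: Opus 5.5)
Your proposal has a genuine gap. It sits exactly at the bulk step you flag as the main obstacle, and neither of your remedies closes it.

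Test it on $S=\pm\kappa$ with probability $\frac12$ each. This law is $\kappa$-subgaussian and bounded, lies entirely in your bulk, and has $y_0=\E S=0$. In your own formula the tilt $w(s,g)=a(s)b(g)e^{-gs}$ cancels $e^{xs}$ completely, leaving $f(x)=\varphi(x)\int a(s)\,b(x-s)\,e^{s^2/2}\,\mu(ds)$. So everything rests on the normalizer $b$, which your heuristic $f\approx\varphi(x)e^{xy_0}\E e^{O(S^2)}$ and your Jensen lower bound both drop.

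Here the Schr\"odinger bridge is explicit. By symmetry $a\equiv 1$ and $b(g)=1/\cosh(\kappa g)$, whence
\[
f(x)=\frac{e^{\kappa^2/2}}{2}\,\varphi(x)\Big(\frac{1}{\cosh(\kappa(x-\kappa))}+\frac{1}{\cosh(\kappa(x+\kappa))}\Big)\le 2e^{2\kappa^2}\varphi(x)e^{-\kappa|x|}.
\]
This is not bounded below by $C^{-1}\varphi(x-y)$ for any $y$. So bounded support does not make the normalizers comparable to constants.

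The saturated tilt fails in the opposite direction. Again $a\equiv 1$, and now $b(g)=1/\cosh 1$ for $|g|\ge 1/\kappa$. Hence $\mathbb{P}[S=\kappa\mid G=g]=\frac{e^{-1}}{2\cosh 1}$ for all large $g$, and $f(x)\ge \frac{e^{-1}}{2\cosh 1}\varphi(x\mp\kappa)\gtrsim\varphi(x)e^{\kappa|x|}$ for $\pm x\ge \kappa+1/\kappa$. That violates the upper bound for every shift.

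Your justification, that the conditional law has spread $O(1/|x|)$ around $y_0$, cannot hold for a law with no mass within $\kappa$ of $y_0$. For the same reason the window fallback has nothing to pair with. Subgaussianity controls mass only at scales $\gtrsim\kappa$, not in windows of width $2^{-j}\ll\kappa$. Even where such mass exists, you give no mechanism for the lower bound.

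The missing idea is to replace the soft tilt by a hard sign rule plus a reserved piece of exact Gaussian shape. This avoids normalizers altogether. In the paper ($S$ discrete with atoms $x_j$) the construction runs as follows.
\begin{itemize}
\item Each atom $x_j\in[-1,1]$ gives up a fixed fraction $\alpha_j$ of its mass. This part is coupled with $G$ so that $S+G$ has density exactly $\nu\varphi(\cdot-y_0)$ on the half-line from $y_0$ containing $x_j$. This is admissible because $\nu\varphi(x-y_0)\le\varphi(x-x_j)$ there, and it supplies the lower bound.
\item All remaining mass of an atom is paired with $G$ of sign opposite to $x_j-y_0$, drawn from the pooled leftover half-Gaussians.
\item The point $y_0\in[-1,1]$ is chosen by the intermediate value theorem so that supply and demand of negative and positive $G$-mass match. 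Subgaussianity makes the endpoint inequalities hold and guarantees that the atoms of $[-1,1]$ on each side of $y_0$ carry mass at least $\frac{9}{10}c$.
\end{itemize}
Then for $x<y_0$, atoms to the right of $y_0$ contribute at most $C\varphi(x-x_j)\le C\varphi(x-y_0)$, because $x<y_0<x_j$. Atoms to the left, having $G\ge 0$, reach $x$ only if $x_j\le x$. That is a tail event of mass $\le 2e^{-x^2/(2\kappa^2)}\ll\varphi(x-y_0)$.

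Your tail-shell step can be made to work if the conditional variance is taken below $1$. It becomes unnecessary once the sign rule is in place.
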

\begin{proof}
Let $\kappa\in (0,\frac{1}{2})$, which we will fix in the proof. For simplicity, we assume that $S$ is a discrete variable with density $\sum_{i=1}^I p_i\delta_{x_i}$ where $I>0$ is an integer, $p_i\in (0,1)$, $\sum_{i=1}^Ip_i =1$, and $x_i\in \R$. The general case can then be deduced by approximating the variable $S$ in the weak topology by such discrete variables, and use compactness.

Consider some $y_0\in [-1,1]$ and some $\nu>0$ for the moment.
Given $j=1,...,I$,
if $x_j\notin [-1,1]$, set 
$$g_{j,0}(x):= 0,\quad g_{j,1}(x):= \varphi(x-x_j)$$
and if $x_j\in [-1,1]$, set
$$g_{j,0}(x):= \nu \varphi(x-y_0)\mathds{1}_{j,y_0}(x),\quad g_{j,1}(x):= \varphi(x-x_j) - g_{j,0}(x)$$
where $\mathds{1}_{j,y_0}$ denotes the function equal to $\mathds{1}_{[y_0,\infty)}$ if $y_0< x_j$ and equal to $\mathds{1}_{(-\infty,y_0]}$ if $x_j\leq y_0$.
If $\nu\in (0,1/2)$ is smaller than some universal positive constant, then $g_{j,1}(x)>0$ for all $j=1,...,I$, all $x\in \R$ and all choices of $y_0\in [-1,1]$.
If $x_j\notin [-1,1]$, set 
$$\alpha_{j}:=0, \quad \beta_{j,-} = \beta_{j,+}:=\frac{1}{2}$$
and if $x_j\in [-1,1]$, set
$$\alpha_{j}:= \int_{\R } g_{j,0}(x) dx, 
\quad \beta_{j,-}:= \int_{\R}g_{j,1}(x) \mathds{1}_{\{x<x_j\}}(x) dx,
\quad \beta_{j,+}:= \int_{\R}g_{j,1}(x)\mathds{1}_{\{x\geq x_j\}}(x) dx.$$
Note that  for any $j=1,...,I$, 
\begin{align}\label{varphixxj}
g_{j,0}(x) + g_{j,1}(x) = \varphi(x-x_j) \quad \text{for all $x\in \R$},
\end{align}
\begin{align}\label{abb}
\alpha_{j}+ \beta_{j,-}+ \beta_{j,+}=1,
\end{align}
\begin{align}\label{b>c}
\beta_{j,-}>c\quad \text{and}\quad  \beta_{j,+}>c\quad \text{for a constant $c>0$ depending only on $\nu$}.
\end{align}

Given $y_0\in [-1,1]$, renumber the $x_i$'s so that $\{x_1,.,,,x_{I_{y_0}}\} = \{x_i\}_{i=1}^I \cap (-\infty,y_0]$, $\{x_{I_{y_0}+1},.,,,x_{I}\} = \{x_i\}_{i=1}^I \cap (y_0,\infty)$. As a convention, $I_{y_0} := 0$ when the first set is empty, and $I_{y_0}:=I$ when the second one is empty. A corresponding convention is assumed when taking sums indexed by $1\leq i\leq I_{y_0}$  or $I_{y_0}+1\leq i\leq I$.
For $\kappa>0$ smaller than some universal positive constant, we claim that there is a choice of $y_0\in [-1,1]$ such that 
\begin{enumerate} [label=(\alph*)]
\item either $y_0\notin  \{x_i\}_{i=1}^I$ and 
\begin{equation}\label{casea}
\sum_{i=1}^{I} p_i \beta_{i,+}  = \sum_{i=1}^{I_{y_0}}p_i(1-\alpha_i)\quad  \text{and}\quad   \sum_{i=1}^{I} p_i \beta_{i,-} = \sum_{i=I_{y_0}+1}^{I}p_i (1-\alpha_i).
\end{equation}
\item or $y_0 = x_{I_{y_0}} \in  \{x_i\}_{i=1}^I$, and for some $p'\in (0,p_{I_{y_0}})$, 
$$\sum_{i=1}^{I} p_i \beta_{i,+}  = \sum_{i=1}^{I_{y_0}-1}p_i(1-\alpha_i) + p'(1-\alpha_{I_{y_0}}) \quad \text{and}\quad  \sum_{i=1}^{I} p_i \beta_{i,-} = \sum_{i=I_{y_0}+1}^{I}p_i (1-\alpha_i) +(p_{I_{y_0}}-p')(1-\alpha_{I_{y_0}}).$$
\end{enumerate}
As we explain next, this follows from a continuity argument. First, note that by (\ref{abb}) we have 
$$\sum_{i=1}^{I} p_i \beta_{i,+} +  \sum_{i=1}^{I} p_i \beta_{i,-} = \sum_{i=1}^{I_{y_0}}p_i(1-\alpha_i)+\sum_{i=I_{y_0}+1}^{I}p_i (1-\alpha_i),$$
so if one equality holds in Case (a) or Case (b) then the other one automatically follows. 
As $y_0$ varies from $-1$ to $1$, for each $j=1,...,I$, it is easy to check that $\alpha_j$ is constant and the quantities $\beta_{j,-}$, $\beta_{j,+}$ vary continuously except at moments when $y_0\in \{x_j\}_{j=1}^I\cap [-1,1]$.
Since $S$ is $\kappa$-subgaussian, if $\kappa$ is small enough with respect to $c>0$, then 
\begin{equation}\label{sumpi}
\sum_{x_i\leq -1} p_i<\frac{c}{10}\quad \text{and}\quad \sum_{x_i\geq 1} p_i < \frac{c}{10}.
\end{equation}
So when $y_0=-1$, by (\ref{b>c}),
$$\sum_{i=1}^{I} p_i \beta_{i,+} > c >\sum_{i=1}^{I_{y_0}}p_i\geq  \sum_{i=1}^{I_{y_0}}p_i(1-\alpha_i)$$
and when $y_0=1$, by (\ref{abb}) and (\ref{b>c}), 
$$\sum_{i=1}^{I} p_i \beta_{i,+} < \sum_{i=1}^{I} p_i (1-\alpha_i -c) \leq \sum_{i=1}^{I} p_i (1-\alpha_i) - c \leq \sum_{i=1}^{I_{y_0}}p_i(1-\alpha_i).$$
Thus by the intermediate value theorem, there ought to be some $y_0\in [-1,1]$ for which we have either Case (a) or Case (b).

In a way, Case (b) can be viewed as a degenerate case of Case (a) as follows. If Case (b) occurs, set $x'_j=x_j$ and $p'_j=p_j$ for all $j\in \{1,...,I_{y_0}-1\}$, and $x'_j=x_{j-1}$ and $p'_j=p_{j-1}$ for all $j\in \{I_{y_0}+2,...,I+1\}$.  Set $x'_{I_{y_0}} :=x_{I_{y_0}}$, $x'_{I_{y_0}+1} := x_{I_{y_0}}$ and $p'_{I_{y_0}} :=p'$, $p'_{I_{y_0}+1}=p_{I_{y_0}}-p'$.
Thus $x_{I_{y_0}}$ is repeated twice (by $x'_{I_{y_0}}$ and $x'_{I_{y_0}+1}$), and $p'_{I_{y_0}}$ and $p'_{I_{y_0}+1}$ are the probabilities corresponding to $x'_{I_{y_0}}$ and $x'_{I_{y_0}+1}$ respectively. Because of this, in the remaining of the proof, we will only need to explain the construction in Case (a).

Suppose then that Case (a) occurs.
Set 
\begin{align}\label{ggamma}
\begin{split}
\gamma_{-}& :=\int_{\R} \sum_{i=1}^I p_i g_{i,1}(x+x_i) \mathds{1}_{\{x<0\}}(x) \,dx = \sum_{i=1}^I p_i \beta_{i,-}>c,\\
\gamma_{+}&:=\int_{\R} \sum_{i=1}^I p_i g_{i,1}(x+x_i) \mathds{1}_{\{x\geq 0\}}(x)\, dx= \sum_{i=1}^I p_i \beta_{i,+}>c,
\end{split}
\end{align}
where the inequalities come from (\ref{b>c}).
Given $x_j$, let $V_j$ be a random variable with distribution density $\frac{1}{\alpha_j}g_{j,0}(x+x_j).$
Let $W_-$ and $W_+$ be random variables with distributions densities 
$\frac{1}{\gamma_{-}}\sum_{i=1}^{I}p_i g_{i,1}(x+x_i) \mathds{1}_{\{x<0\}}(x)$
and
$\frac{1}{\gamma_{+}}\sum_{i=1}^{I} p_i g_{i,1}(x+x_i) \mathds{1}_{\{x\geq0\}}(x)$ 
 respectively.
 Let $B$ be a random variable such that if our $\kappa$-subgaussian variable $S$ equals $x_j$, then $B = 1$ (resp. $0$) with probability $\alpha_j$ (resp. $1-\alpha_j$). 
Now, set $G$ to be a random variable such that 
\begin{enumerate}[label=(\roman*)]
\item conditioned on $S=x_j $ and $B=1$,  $G$ has same distribution as $V_j$,
\item conditioned on $S=x_j$, $j\geq I_{y_0}+1$ and $B=0$,  $G$ has  same distribution as $W_-$,
\item conditioned on $S=x_j$, $j \leq I_{y_0}$ and $B=0$,  $G$ has same distribution as $W_+$.
\end{enumerate}
By the previous conditions, the probability distribution of $G$ has density function 
\begin{align*}
& \sum_{j=1}^I p_j \alpha_j \frac{1}{\alpha_j}g_{j,0}(x+x_j) 
+\sum_{j=I_{y_0}+1}^{I} p_j (1-\alpha_j)\frac{1}{\gamma_{-}}\sum_{i=1}^{I}p_i g_{i,1}(x+x_i) \mathds{1}_{\{x<0\}}(x) \\
&+ \sum_{j=1}^{I_{y_0}} p_j (1-\alpha_j)\frac{1}{\gamma_{+}}\sum_{i=1}^{I}p_i g_{i,1}(x+x_i) \mathds{1}_{\{x\geq0\}}(x)\\
\underset{(\ref{casea})}{=} \, &\sum_{j=1}^I p_j g_{j,0}(x+x_j) 
+\sum_{i=1}^{I}p_i g_{i,1}(x+x_i) \mathds{1}_{\{x<0\}}(x) +\sum_{i=1}^{I}p_i g_{i,1}(x+x_i) \mathds{1}_{\{x\geq0\}}(x)\\
= &\sum_{j=1}^I p_j g_{j,0}(x+x_j) +\sum_{i=1}^{I}p_i g_{i,1}(x+x_i)\underset{(\ref{varphixxj})}{=} \, \varphi(x).
\end{align*}
Hence, $G$ is indeed a standard Gaussian random variable.
To finish the proof, let us check that $S+G$ has density $f(x)$ satisfying the bounds of the statement. We will use the standard notations $O(.)$ and $\Theta(.)$ as explained in Subsection \ref{sub:convention}.
The function $f$ is clearly well-defined and piecewise continuous. 
Assume first that $x< y_0$. We can write
$$f(x)=(i) + (ii) + (iii)$$
where 
\begin{align*}
\begin{split}
(i)  := \sum_{j=1}^{I} \frac{p_j \alpha_j}{\alpha_j}g_{j,0}(x) =\sum_{j=1}^{I} p_j g_{j,0}(x) 
\underset{x< y_0}{=} \nu \varphi(x-y_0) \sum_{-1\leq x_j<y_0} p_j,
\end{split}
\end{align*}
\begin{align*}
\begin{split}
(ii)& := \sum_{j=I_{y_0}+1}^I  \frac{p_j(1-\alpha_j)}{\gamma_{-}}\sum_{i=1}^I p_i g_{i,1}(x-x_j+x_i) \mathds{1}_{\{x-x_j<0\}}(x-x_j)\\
& \underset{x< y_0}{=} \sum_{j=I_{y_0}+1}^I \frac{p_j(1-\alpha_j)}{\gamma_{-}} \sum_{i=1}^I p_i g_{i,1}(x-x_j+x_i), \\
\end{split}
\end{align*}
\begin{align*}
\begin{split}
(iii)& :=\sum_{j=1}^{I_{y_0}} \frac{p_j(1-\alpha_j)}{\gamma_{+}} \frac{}{}\sum_{i=1}^I p_i g_{i,1}(x-x_j+x_i) \mathds{1}_{\{x-x_j\geq 0\}}(x-x_j) \\
&= \sum_{x_j\leq x} \frac{p_j(1-\alpha_j)}{\gamma_{+}} \sum_{i=1}^I p_i g_{i,1}(x-x_j+x_i). 
\end{split}
\end{align*}
To control $(i)$, note that 
\begin{align*}
1& \geq \sum_{-1\leq x_j<y_0} p_j 
=\sum_{j=1}^{I_{y_0}}p_j -\sum_{x_j<-1}p_j
\underset{(\ref{sumpi})}{\geq} \, \sum_{j=1}^{I_{y_0}}p_j -\frac{c}{10} \\
& \geq \sum_{i=1}^{I_{y_0}} p_j (1-\alpha_j)  -\frac{c}{10}
\underset{(\ref{casea})}{=} \, \sum_{i=1}^{I} p_j \beta_{i,+}   -\frac{c}{10}\underset{(\ref{b>c})}{\geq} \, \frac{9}{10}c>0.
\end{align*}
So for $x< y_0$,  (i) contributes $\Theta(\nu \varphi(x-y_0))$ to the density $f(x)$.
Next, to control $(ii)$, note that 
$$\sum_{i=1}^{I}p_i g_{i,1}(z+x_i)\leq \sum_{i=1}^{I}p_i \varphi(z) =\varphi(z) \quad \text{since $g_{i,1}(z+x_i) \leq \varphi(z)$,} $$
\begin{align*}
\begin{split}
0 & \leq \sum_{j=I_{y_0}+1}^I \frac{p_j(1-\alpha_j)}{\gamma_{-}} \varphi(x-x_j) 
\underset{(\ref{ggamma})}{\leq} \frac{1}{c}\sum_{j=I_{y_0}+1}^I p_j \varphi(x-x_j) \\
& \underset{x<y_0}{\leq} \frac{1}{c}\sum_{j=I_{y_0}+1}^I p_j \varphi(x-y_0)
\leq \frac{1}{c} \varphi(x-y_0).
\end{split}
\end{align*}
So for $x< y_0$, (ii) contributes $O(\varphi(x-y_0))$ to the density $f(x)$.
Next,  to control $(iii)$, we use that $S$ is $\kappa$-subgaussian\footnote{This is where $\kappa$-subgaussianity of $S$ is crucially used.} for some $\kappa\in (0,\frac{1}{2})$: we have $\sum_{x_j\leq x}p_j \leq 2 \exp(-\frac{x^2}{2\kappa})\leq  10^{100} \varphi(x-y_0)$ because $y_0\in [-1,1]$. Thus
$$0
\leq  \sum_{x_j\leq x} \frac{p_j(1-\alpha_j)}{\gamma_{+}}  \varphi(x-x_j) 
\underset{(\ref{ggamma})}{\leq}
\frac{1}{c}\sum_{x_j\leq x} p_j \varphi(x-x_j)
\leq  \frac{1}{c\sqrt{2\pi}}\sum_{x_j\leq x} p_j
\leq \frac{2.10^{100}}{c\sqrt{2\pi}}\varphi(x-y_0).$$
So for $x< y_0$, (iii) contributes $O(\varphi(x-y_0))$ to the density $f(x)$.
Putting these estimates together, the density $f(x)$ of the distribution of $S+G$ satisfies as desired:
$$\forall x< y_0,\quad C^{-1} \varphi(x-y_0)\leq f(x)\leq C \varphi(x-y_0)$$
for some universal constant $C\geq 1$.
With similar arguments, we conclude the same bound when $x> y_0$, which is enough to prove the lemma.

\end{proof}

Recall the notation $\varphi$ in (\ref{varphi}).
The following is due to Bobkov \cite[Lemma 3.1]{Bob10}\footnote{To get our statement from \cite[Lemma 3.1]{Bob10}, approximate the piecewise continuous density by continuous densities, set $c:=\log C^2$ and use that if $X\sim \mathcal{N}(0,e^{2c})$ then $\frac{1}{e^c}X\sim \mathcal{N}(0,1)$.}:
\begin{lem}[\cite{Bob10}]\label{thm:bobkov}
Let $C\geq 1$ and let $\mu$ be a probability measure on $\R$ with piecewise continuous density 
$$C^{-1}\varphi(x)\leq \frac{d\mu (x)}{dx} \leq  C \varphi(x).$$
Then $\mu = F_*(\gamma_1)$ for a Lipschitz map $F:\R\to \R$ with Lipschitz constant at most $C^2$.

\end{lem}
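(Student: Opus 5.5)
The plan is to take $F$ to be the monotone (increasing) transport map from $\gamma_1$ to $\mu$ and bound its derivative using the Gaussian isoperimetric profile. Write $\rho:=\frac{d\mu}{dx}$, $F_\mu(y):=\mu((-\infty,y])$ and $\Phi(x):=\int_{-\infty}^x\varphi$. Since $\rho\geq C^{-1}\varphi>0$ everywhere, $F_\mu$ is a strictly increasing, continuous bijection $\R\to(0,1)$. Hence $F:=F_\mu^{-1}\circ\Phi$ is a well-defined increasing homeomorphism of $\R$ with $F_*(\gamma_1)=\mu$. Because $\rho$ is piecewise continuous and bounded below on compact sets, $F_\mu$ is $C^1$ away from a discrete set. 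So $F$ is locally absolutely continuous, and at all but countably many points
$$F'(x)=\frac{\varphi(x)}{\rho(F(x))}=\frac{I(t)}{\rho(y)},\qquad y:=F(x),\quad t:=\Phi(x)=F_\mu(y),$$
where $I:=\varphi\circ\Phi^{-1}$ is the Gaussian isoperimetric profile on $[0,1]$. It therefore suffices to prove $F'\leq C^2$ at these points.

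Using $\rho(y)\geq C^{-1}\varphi(y)=C^{-1}I(\Phi(y))$, the goal reduces to the profile inequality
$$I(t)\leq C\, I(\Phi(y)).$$
The only input from the density bounds, besides the lower bound on $\rho$, is obtained by integrating $\rho\leq C\varphi$ over tails:
$$t=\mu((-\infty,y])\leq C\,\Phi(y)\quad\text{and}\quad 1-t\leq C\,(1-\Phi(y)).$$

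To prove the profile inequality I will use two standard facts: $I$ is concave with $I(0)=I(1)=0$, and $I$ is symmetric about $\frac12$, increasing on $[0,\frac12]$. Concavity and $I(0)=0$ give that $s\mapsto I(s)/s$ is nonincreasing. Consider first $y\leq 0$, so that $s:=\Phi(y)\leq \frac12$.
\begin{itemize}
\item If $t\geq s$, then $I(t)/t\leq I(s)/s$, hence $I(t)\leq (t/s)\,I(s)\leq C\, I(s)$.
\item If $t< s$, then $t<s\leq\frac12$, so monotonicity of $I$ on $[0,\frac12]$ gives $I(t)\leq I(s)\leq C\, I(s)$.
\end{itemize}
The case $y\geq 0$ is the mirror image. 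Replace $t,s$ by $1-t,1-s$, use $I(u)=I(1-u)$, and use the upper-tail bound $1-t\leq C(1-s)$.

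Combining these, $F'\leq C\cdot C=C^2$ almost everywhere. By absolute continuity, $F$ is $C^2$-Lipschitz, which proves the lemma. I expect the main obstacle to be the profile inequality in the previous paragraph, specifically noticing that the tail comparisons must be matched to the side of $y$ (lower tail for $y\le 0$, upper tail for $y\ge 0$). Once that is seen, concavity of $I$ handles the regime where $\mu$ has more mass in the tail than $\gamma_1$, and monotonicity of $I$ handles the regime where it has less. The regularity issues (piecewise continuity of $\rho$, a.e.\ differentiability of $F$) are routine. They can alternatively be bypassed by approximating $\rho$ by continuous densities satisfying the same two-sided bounds and passing to the limit of the uniformly Lipschitz maps via Arzel\`a--Ascoli.
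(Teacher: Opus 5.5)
Your proof is correct, and it takes a self-contained route where the paper relies on a citation. The paper does not prove this lemma. It invokes Bobkov's Lemma 3.1 (stated for continuous densities) and, in a footnote, reduces to it in three steps: approximate the piecewise continuous density by continuous ones, choose $c=\log C^2$, and rescale $\mathcal{N}(0,e^{2c})$ to $\gamma_1$. The rescaling is where the constant $e^{c}=C^2$ comes from.

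You instead work directly with the monotone map $F=F_\mu^{-1}\circ\Phi$ and write $F'(x)=I(t)/\rho(y)$ with $I=\varphi\circ\Phi^{-1}$. You then split the bound $F'\leq C^2$ into two pieces:
\begin{itemize}
\item the pointwise lower bound $\rho(y)\geq C^{-1}I(\Phi(y))$;
\item the profile comparison $I(t)\leq C\,I(\Phi(y))$.
\end{itemize}
The profile comparison is correctly derived from the tail bounds $t\leq C\Phi(y)$ and $1-t\leq C(1-\Phi(y))$. For $y\leq 0$ you use that $u\mapsto I(u)/u$ is nonincreasing when $t\geq\Phi(y)$, and that $I$ is increasing on $[0,\frac12]$ when $t<\Phi(y)$. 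The case $y\geq 0$ is the mirror argument via $I(u)=I(1-u)$ and the upper tail.

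Your route buys three things:
\begin{itemize}
\item It makes the origin of each factor $C$ transparent.
\item It shows that the upper bound $\rho\leq C\varphi$ enters only through the two tail inequalities $\mu((-\infty,y])\leq C\Phi(y)$ and $\mu([y,\infty))\leq C(1-\Phi(y))$. So the conclusion holds under that weaker hypothesis together with $\rho\geq C^{-1}\varphi$.
\item It treats piecewise continuous densities directly. Since $F$ is locally Lipschitz and differentiable off a countable set, no approximation or compactness step is needed.
\end{itemize}
The paper's route is simply shorter, because it outsources this one-dimensional transport estimate to the literature.
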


\begin{proof}[Proof of Theorem \ref{thm:1d}]
Let $\kappa>0$ and $C\geq 1$ be the universal constants in Lemma \ref{lem:density bound}. Consider a centered real-valued variable $S$ which is  $\frac{\kappa}{\sqrt{2}C^2}$-subgaussian, so that $\sqrt{2}C^2S$ is $\kappa$-subgaussian.
By Lemma \ref{lem:density bound} and using that if $G_0\sim \mathcal{N}(0,1)$ then $-G_0\sim \mathcal{N}(0,1)$, there is a standard Gaussian variable $G$ such that 
$\sqrt{2}C^2S-G$ has density $f:\R\to \R$ satisfying for some $y_0\in [-1,1]$:
 $$C^{-1}\varphi(x-y_0) \leq f(x) \leq C\varphi(x-y_0).$$
 By Lemma \ref{thm:bobkov}, there is a Lipschitz map $T:\R\to \R$ with Lipschitz constant at most $C^2$, sending the standard Gaussian measure $\gamma_1$ to the measure $f(x)dx$. By Lemma \ref{lem:average of gaussians} and since $S$ and $G$ are centered, this implies that there are two standard Gaussian variables $G',G''$ such that
 $\sqrt{2}C^2S-G = C^2\frac{G'+G''}{2}$, or equivalently $S = \frac{1}{\sqrt{2}C^2}(\frac{C^2}{2}(G'+G'')+G).$ Note that $\frac{C^2}{2\sqrt{2}C^2} = \frac{1}{2\sqrt{2}}\leq \frac{1}{\sqrt{2}}$ and $\frac{1}{\sqrt{2}C^2}\leq \frac{1}{\sqrt{2}}$. By Lemma \ref{lem:normalization},
$S\in \Sigma^3\mathcal{G}(\R)$ and the theorem is proved by renaming the universal constant $\frac{\kappa}{\sqrt{2}C^2}$ as $\kappa$.

\end{proof}

\subsection{Random vectors with bounded norm and covariance}
Recall that $\gamma_n$ is the standard Gaussian measure on $\R^n$.
A probability measure $\mu$ on $\R^n$ is called \emph{$1$-uniformly log-concave} if it has a density function that can be written as $\exp(-V(x)) $ where $V:\R^n \to \R\cup \{\infty\}$ is convex, $V$ is twice differentiable on the interior of  $\{V<\infty\}$ and $\Hess V\geq \Id$ on $\{V<\infty\}$.
We will need Caffarelli's contraction theorem \cite{Caf00}:
\begin{thm}[\cite{Caf00}]\label{caf}
Let $\mu$ be a probability measure on $\R^n$ which is $1$-uniformly log-concave. Then there is a $1$-Lipschitz map $F:\R^n\to \R^n$ such that $F_*(\gamma_n) = \mu$.
\end{thm}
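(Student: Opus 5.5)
This is Caffarelli's theorem and the paper only cites it, but I would reprove it along Caffarelli's original route: take the Brenier map and bound its Hessian by a maximum principle applied to the Monge--Amp\`ere equation. The conclusion is unchanged by weak approximation. If $\mu_k\to\mu$ weakly and each $\mu_k=(F_k)_*\gamma_n$ with $F_k$ $1$-Lipschitz, then after recentring the $F_k$ are equicontinuous, so Arzel\`a--Ascoli gives a $1$-Lipschitz limit $F$ with $F_*\gamma_n=\mu$. I will therefore assume that $\mu=e^{-V}dx$ with $V$ smooth and $\Hess V\geq \Id$ everywhere. If $\{V<\infty\}$ is a proper convex set, I replace $V$ by $V+\frac{1}{\epsilon}\,d(x,\{V<\infty\})^2$ smoothed; this keeps $\Hess V\geq \Id$.

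\textbf{Step 1 (Brenier map).} By Brenier's theorem there is a convex $\phi$ with $\nabla\phi_*\gamma_n=\mu$. By Caffarelli's regularity theory for smooth, positive, log-concave data, $\phi$ is smooth and satisfies the Monge--Amp\`ere equation
\begin{equation*}
\log\det D^2\phi(x) = V(\nabla\phi(x)) - \frac{|x|^2}{2} + \text{const}.
\end{equation*}
It then suffices to show $\phi_{ee}\leq 1$ for every unit vector $e$, because $\nabla\phi$ is $1$-Lipschitz exactly when $0\leq D^2\phi\leq \Id$.

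\textbf{Step 2 (maximum principle).} Differentiate the equation twice in direction $e$. Write $L=\Tr((D^2\phi)^{-1}D^2\,\cdot\,)$ for the linearized operator. Concavity of $\log\det$ gives
\begin{equation*}
L(\phi_{ee}) \geq \langle \Hess V(\nabla\phi)\,\nabla\phi_e,\nabla\phi_e\rangle + \langle\nabla V(\nabla\phi),\nabla\phi_{ee}\rangle - 1 .
\end{equation*}
Suppose $(x_0,e)$ maximizes $\phi_{ee}$ over all points and unit directions. Then $\nabla\phi_{ee}(x_0)=0$ and $L\phi_{ee}(x_0)\leq 0$. Since $e$ is then an eigenvector of $D^2\phi(x_0)$, we have $\nabla\phi_e(x_0)=\phi_{ee}(x_0)\,e$. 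Combining these with $\Hess V\geq \Id$ yields $0\geq \phi_{ee}(x_0)^2-1$, that is, $\phi_{ee}\leq 1$.

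\textbf{Main obstacle.} The difficulty is that the supremum of $\phi_{ee}$ may not be attained, since $\R^n$ is noncompact. I would first handle this as Caffarelli does, by perturbing. Replace $\phi_{ee}$ by $\phi_{ee}-\delta|x|^2$, or restrict to targets that are compactly supported smooth perturbations of a Gaussian, for which one has asymptotic control of $D^2\phi$ at infinity. In both cases one controls the growth of $\phi_{ee}$ using the equation and the comparison $\mu\leq C\gamma_n$-type bounds inherited from $\Hess V\geq \Id$, reruns the maximum principle with the extra error terms, and lets $\delta\to0$. If this bookkeeping becomes unmanageable, the fallback is the Kim--Milman heat-flow construction, which is only a few lines given the tools already used in Lemma~\ref{lem:average of gaussians}. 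Interpolate by the Ornstein--Uhlenbeck flow $\mu_t$ from $\mu$ toward $\gamma_n$ and note that $\mu_t$ stays $1$-uniformly log-concave. The velocity field of this flow has Jacobian controlled by $\Id-\Cov$ of the tilted measures, and the Brascamp--Lieb inequality gives $\Cov\leq \Id$. Integrating the resulting ODE produces a $1$-Lipschitz map pushing $\gamma_n$ to $\mu$.
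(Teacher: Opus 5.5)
The paper does not prove this statement; it only cites \cite{Caf00}. Your Step 2 computation is correct: concavity of $\log\det$, $e$ being a top eigenvector at the maximum, and $\Hess V\geq\Id$ give $\phi_{ee}^2\le 1$. This is the formal core of the Brenier-map proof. But it only applies once the supremum of $\phi_{ee}$ is attained, which is exactly the step you leave open, and neither remedy you sketch closes it.

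For $\phi_{ee}-\delta|x|^2$ to attain its supremum you need an a priori bound $\sup_{|x|\le R}\|D^2\phi\|=o(R^2)$, and interior regularity does not provide one. Even granting that, at a maximum point $x_\delta$ (where still $\nabla\phi_e=\phi_{ee}e$) your computation only gives
\[
\phi_{ee}(x_\delta)^2\le 1+2\delta\Big[\Tr\big((D^2\phi(x_\delta))^{-1}\big)-\big\langle\nabla V(\nabla\phi(x_\delta)),x_\delta\big\rangle\Big].
\]
Letting $\delta\to0$ therefore needs a global lower bound on $D^2\phi$ and control of $\delta\,|x_\delta|\,|\nabla V(\nabla\phi(x_\delta))|$, a term with no a priori sign or growth bound. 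A zeroth-order comparison such as $\mu\le C\gamma_n(\cdot+b)$ controls none of these second-order quantities. Your other remedy fails as stated. If $V-|x|^2/2$ is a constant plus a compactly supported perturbation $W$, then $\Hess V\ge\Id$ makes $W$ convex and bounded above on $\R^n$, hence zero, so $\mu=\gamma_n$. Perturbations of narrower Gaussians avoid this triviality, but then the claimed control of $D^2\phi$ at infinity is an unproved global statement about the Brenier map, of the same nature as the theorem.

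Your fallback is the argument that actually closes, and it is a genuinely different route from the cited one. It does not produce the Brenier map, which is harmless because the statement only asks for some $1$-Lipschitz $F$. It is also in the spirit of the heat-flow transport results \cite{Mik24,Mik23} the paper already uses. Make it the main proof and state the mechanism precisely:
\begin{itemize}
\item $\mu$ is $1$-uniformly log-concave iff $f=d\mu/d\gamma_n$ is log-concave.
\item With $P_t$ the Ornstein--Uhlenbeck semigroup, $\mu_t=(P_tf)\gamma_n$ solves the continuity equation with velocity $v_t=-\nabla\log P_tf$.
\item The Jacobian of the velocity field is
\[
Dv_t=\frac{e^{-2t}}{1-e^{-2t}}\Big(\Id-\frac{\Cov(\nu_{t,x})}{1-e^{-2t}}\Big),\qquad \nu_{t,x}(dy)\propto f(y)\exp\Big(-\frac{|y-e^{-t}x|^2}{2(1-e^{-2t})}\Big)\,dy .
\]
Here $\nu_{t,x}$ is $(1-e^{-2t})^{-1}$-uniformly log-concave, so Brascamp--Lieb gives $Dv_t\ge0$. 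Equivalently, $P_tf$ stays log-concave by Pr\'ekopa.
\end{itemize}
Hence the flow maps $S_t$ from $\mu$ to $\mu_t$ expand distances, and $F=S_\infty^{-1}$ is the desired contraction.

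What you call a few lines is the well-posedness of the flow, convergence of $S_t$ as $t\to\infty$, and $(S_\infty)_*\mu=\gamma_n$. These become routine if you smooth correctly. Your $V+\frac1\epsilon d(x,\{V<\infty\})^2$ is still $+\infty$ off $\{V<\infty\}$. Instead, replace $\mu$ by the law of $(Y+\sqrt\epsilon Z)/\sqrt{1+\epsilon}$ with $Y\sim\mu$ and $Z\sim\mathcal{N}(0,I_n)$ independent. This law is still $1$-uniformly log-concave, has a smooth positive density, and satisfies $\Hess V\le\frac{1+\epsilon}{\epsilon}\Id$. Cram\'er--Rao applied to $\nu_{t,x}$ then gives $Dv_t\le\epsilon^{-1}e^{-2t}\Id$. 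Together with $|v_t(0)|=O(e^{-t})$, this makes the flow globally defined and convergent. Finally, in your weak-limit step, boundedness of $F_k(0)$ comes from tightness of $(\mu_k)$; \emph{recentring} $F_k$ would change its pushforward.
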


By combining the Lipschitz image lemma (Lemma \ref{lem:average of gaussians}) and Caffarelli's contraction theorem (Theorem \ref{caf}), it is an exercise to show that for some real numbers $r_n$ bounded from below and above by universal positive constants, if $X$ is a centered random vector uniformly distributed on the vertices of a rescaled centered standard simplex in $\R^n$ such that $\|X\|=\sqrt{\log n}$ almost surely, then $r_nX$ is the sum of 3 standard Gaussian vectors in $\R^n$. 
That is a key observation in this section, which is generalized in the next lemma.
Recall the notation $\sum^q\mathcal{G}(\R^n)$ for $q$-fold Gaussian sums defined in (\ref{gaussian sum notation}).

\begin{lem} [$\sqrt{\log d}\,$-Bessel sequence]\label{bessel}

Given $C>0$, there is a positive integer $q=O(C)$ depending only on $C$ such that the following holds. 
Let $\Lambda\geq 1$, let $d,d_0$ be positive integers.
Given a linear map $F:\R^{d}\to \R^{d_0}$ with $\|F\| \leq C$,
define  $\hat{Z}_F$ to be the random vector uniformly distributed on the vectors 
$F(\sqrt{\log d} \, e_1)$, ..., $F(\sqrt{\log d}\,e_d)$.
Then $\hat{Z}_F-\E[\hat{Z}_F] \in \sum^q\mathcal{G}(\R^{d_0}).$
\end{lem}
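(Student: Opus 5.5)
First reduce to the case $F=\Id$ on $\R^d$ (so $d_0=d$). Write $Z:=\sqrt{\log d}\,e_J$ with $J$ uniform on $\{1,\dots,d\}$. Then $\hat Z_F=F(Z)$ and $\hat Z_F-\E[\hat Z_F]=F(Z-\E Z)$. If $Z-\E Z\in\Sigma^{q_0}\mathcal{G}(\R^d)$ for a universal $q_0$, then writing $Z-\E Z=\sum_{i\le q_0} G_i$ gives $F(Z-\E Z)=C\sum_i (F/C)(G_i)$. By Lemma \ref{lem:decomposition}, each $(F/C)(G_i)$ is an average of two standard Gaussians in $\R^{d_0}$. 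Here I use $(F/C)(G_i)=A(G)$ with $A:\R^d\to\R^{d_0}$ of norm $\le1$; to apply the lemma, pad to square matrices and drop coordinates, since projections preserve standard Gaussianity. Corollary \ref{cor:rescale} then gives $q=O(C)$. Small $d$ (say $d\le d_*$) is handled directly: there $\|Z-\E Z\|\le 2\sqrt{\log d_*}$, so Corollaries \ref{cor:bdd support} and \ref{cor:rescale} give a universal $q$.

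The core step is to realize $Z$, up to a universal scaling $r_d\asymp1$, as $\Psi(G)+(\text{small error})$ with $G\sim\mathcal{N}(0,I_d)$ and $\Psi$ Lipschitz. The natural coupling is the argmax: let $J:=\arg\max_i G_i$, which is uniform by symmetry. The map $\Psi(G):=\sqrt{\log d}\,e_{\arg\max G}$ is not Lipschitz, so instead I would use a smoothed version in the spirit of Caffarelli. Consider the log-concave measure $\mu$ with density $\propto \exp\!\big(-\tfrac12\|x\|^2+\beta\max_i x_i\big)$, or a softmax variant $\exp(-\tfrac12\|x\|^2+\beta\,\mathrm{LSE}(x))$ with $\beta\approx\sqrt{2\log d}$. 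The potential $V=\tfrac12\|x\|^2-\beta\max_i x_i$ satisfies $\Hess V\ge\Id$ because $\max$ is convex, so by Theorem \ref{caf}, $\mu=T_*\gamma_d$ with $T$ $1$-Lipschitz. By Lemma \ref{lem:average of gaussians}, $Y-\E Y$ is then an average of two standard Gaussians for $Y\sim\mu$. Note that $\mu$ is a mixture over $i$ of Gaussians $\mathcal{N}(\beta e_i,I_d)$ conditioned on coordinate $i$ being the maximum; when $\beta\asymp\sqrt{\log d}$ these conditionings have probability bounded below. So $Y=\beta e_{J'}+W$ where $J'$ is uniform and, conditionally, $W$ is a Gaussian conditioned on an event of probability $\ge c$.

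It remains to write $\beta e_{J'}=Y-W$ and show that $-W$ is (after centering) a sum of $O(1)$ standard Gaussians. I would condition on $J'=i$ and use Lemma \ref{lem:local global} together with Corollary \ref{cor:bdd support II}. Given $J'=i$, $W$ is $\mathcal{N}(0,I_d)$ conditioned on $\{W_i+\beta\ge W_k+0\ \forall k\}$, which is a convex set. A Gaussian restricted to a convex set is $1$-uniformly log-concave, so by Theorem \ref{caf} and Lemma \ref{lem:average of gaussians} its centered version lies in $\Sigma^{2}$-type classes up to scaling $1/2$. The conditional means $\E[W\mid J'=i]$ must have norm $O(1)$, since conditioning on an event of probability $\ge c$ shifts the mean by $O(\sqrt{\log(1/c)})$. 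Corollary \ref{cor:bdd support II} then handles the varying means.

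Combining $\beta e_{J'}-\E=(Y-\E Y)-(W-\E W)$ with Lemma \ref{lem:normalization} and Corollary \ref{cor:rescale} gives $\sqrt{\log d}\,e_J-\E\in\Sigma^{q_0}$ after rescaling by $\sqrt{\log d}/\beta\asymp1$. There is one subtlety: $Y$ and $W$ must be coupled, and in the construction above they are, since $W=Y-\beta e_{J'}$ with $J'$ the argmax of $Y$ itself (ties have measure zero). Hence both $Y$ and $W$ are defined on the same space, and the decomposition is genuine.

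The main obstacle is the quantitative claim that each conditioning event $\{x_i\ge\max_k x_k\}$ under $\mathcal{N}(\beta e_i,I)$ has probability bounded below uniformly in $d$. This requires $\beta\ge\sqrt{2\log d}+O(1)$, which I would verify by the standard Gaussian maximum tail estimate. The same estimate has to be used to bound the conditional mean shift by $O(1)$.
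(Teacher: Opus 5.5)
There is a genuine gap at the step for $Y$. Your reduction to $F=\Id$, your small-$d$ case, and your analysis of $W$ are fine; the $W$-analysis uses a Gaussian conditioned on a convex event of probability $\ge c$, Caffarelli, bounded conditional means, and Corollary \ref{cor:bdd support II}. The problem is that the potential $V=\tfrac12\|x\|^2-\beta\max_i x_i$ is not $1$-uniformly convex. Since $\max_i x_i$ is convex, $-\beta\max_i x_i$ is \emph{concave}, so $\Hess V\le\Id$, which is the wrong inequality. Concretely, along $x=t(e_1-e_2)$ you get $V=t^2-\beta|t|$, which is not even convex. Your own description of $\mu$ confirms this: it is an equal-weight mixture of the $\mathcal{N}(\beta e_i,I_d)$ restricted to the cones $\{x_i=\max_k x_k\}$, so it has $d$ modes at mutual distance $\sqrt2\,\beta\asymp\sqrt{\log d}$. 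The $\mathrm{LSE}$ variant has the same sign problem. So Theorem \ref{caf} does not apply, and nothing in your argument shows $Y-\E Y\in\Sigma^{O(1)}\mathcal{G}(\R^d)$. This is not cosmetic. Perturbative Lipschitz-transport bounds of the kind behind Corollary \ref{cor:bdd support} lose a factor exponential in $\beta^2$, i.e.\ polynomial in $d$. Proving $Y\in\Sigma^{O(1)}\mathcal{G}(\R^d)$ directly is no easier than the lemma itself.

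The missing idea is that no tilt is needed. The small probability $1/d$ of the events $\{\arg\max_k G_k=i\}$ is harmless, because the conditional mean can be computed exactly and is the \emph{target vector itself}, not an error term. Take $G\sim\mathcal{N}(0,I_d)$ and $J:=\arg\max_i G_i$. Use invariance under permutations fixing $i$, and the fact that $\sum_k G_k$ is independent of $G-\frac1d(\sum_k G_k)\mathbf{1}$, on which $\{J=i\}$ depends. Then $\E[G\mid J=i]=c_d\,(e_i-\frac1d\mathbf{1})$ with $c_d=\frac{d}{d-1}\E[\max_k G_k]\asymp\sqrt{\log d}$. Conditionally on $\{J=i\}$, $G-\E[G\mid J]$ is a translate of the normalized restriction of $\gamma_d$ to the convex cone $\{x_i\ge x_k\ \forall k\}$, hence $1$-uniformly log-concave. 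Theorem \ref{caf} and Lemma \ref{lem:average of gaussians} then put it in $\Sigma^2\mathcal{G}(\R^d)$, and Lemma \ref{lem:local global} glues over $i$. Hence $c_d(e_J-\frac1d\mathbf{1})=G-(G-\E[G\mid J])\in\Sigma^3\mathcal{G}(\R^d)$. Corollary \ref{cor:rescale} finishes, since $\sqrt{\log d}/c_d=O(1)$. This is exactly the paper's argument, phrased there with the cells $R_j$ of the centered simplex in $\R^{d-1}$. It needs neither the tilt nor your $W$-analysis.
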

\begin{proof}
Since if $k<m$, $\gamma_k$ on $\R^k\subset \R^m$ is the pushforward of $\gamma_m$ on $\R^m$ by an orthogonal projection map, we deduce thanks to Corollary \ref{cor:linear bis} that for this proof, we only need to deal with the case $d=d_0$. 
Thanks to Corollary \ref{cor:rescale}, we also only need to show the statement when $C\leq 1$. By Corollary \ref{cor:linear bis}, we can further reduce the proof  to the case where $F= \Id$.
Consider the vectors 
$$v_1:= \sqrt{\log d} \,(e_1-\frac{1}{d}\sum_{i=1}^d e_i),...,v_d:=  \sqrt{\log d} \,(e_d-\frac{1}{d}\sum_{i=1}^d e_i).$$ 
Let $Z_F$ be the random vector  uniformly distributed on the $v_i$'s.
To finish the proof, we just need to show that $Z_F\in \sum^q\mathcal{G}(\R^d)$ for some universal integer $q>0$.

Identify the subspace generated by $v_1,...,v_d$ with $\R^{d-1}$.  Given $j\in \{1,...,d\}$, consider the convex region
$$R_j:= \{z\in \R^{d-1};\quad \langle z,v_j\rangle\geq \langle z,v_k\rangle\text{ for all $k\neq j$}\}.$$
Let $G_1$ be a standard Gaussian random vector in $\R^{d-1}$. 
By symmetry, $G_1$ has equal probability $\frac{1}{d}$ of belonging to any of the $R_j$'s.
The underlying probability space $\Omega$ on which $G_1$ is defined can be divided into $d$ regions $\Omega_1,...,\Omega_d$ such that for $j\in \{1,...,d\}$, $\Omega_j$ is the region where $G_1\in R_j$. 
For $j\in \{1,...,d\}$, let $\mathbf{1}_{\Omega_j}$ denote the random variable equal to $1$ on $\Omega_j\subset \Omega$ and $0$ otherwise.
A standard computation and symmetry considerations show that for some $M>0$ independent of $d$, and some $C_d\in (\frac{1}{M},M)$,
\begin{equation}\label{averagex1}
\text{for $j\in \{1,...,d\}$, the expectation of $G_1$ conditioned on $\mathbf{1}_{\Omega_j}=1$ is equal to $C_d v_j$}.
\end{equation}
Given $j\in \{1,...,d\}$, let $\mu_j$ be the unique probability measure supported on $R_j$ and with density proportional to $\exp(-\|x\|^2/2)$ on $R_j$.  This measure $\mu_j$ is $1$-uniformly log-concave. By Theorem \ref{caf} and since $\gamma_{d-1}$ is the pushforward  of $\gamma_d$ by a $1$-Lipschitz map, there is a $1$-Lipschitz map $\Phi_j:\R^{d}\to \R^{d-1}$ such that 
\begin{equation}\label{phimuj}
(\Phi_j)_* \gamma_d = \mu_j.
\end{equation}
Clearly, the distribution of $G_1$, conditioned on $\mathbf{1}_{\Omega_j}=1$, is the probability measure $\mu_j$.
Hence, by Corollary \ref{cor:linear bis} and (\ref{averagex1}),
\begin{equation}\label{condj}
\text{$G_1-C_d v_j$ conditioned on $\mathbf{1}_{\Omega_j}=1$ belongs to ${\sum}^{2}\mathcal{G}(\R^d)$.} 
\end{equation}
To finish the proof, consider the random vector $Y_1$ in $\R^d$ defined on the probability space $\Omega$, equal to $G_1-C_d v_j$ on $\Omega_j$  for $j=1,..., d$.
Then by  (\ref{condj}) and Lemma \ref{lem:local global}, $Y_1\in {\sum}^{2}\mathcal{G}(\R^d)$.
Hence, $G_1-Y_1\in {\sum}^{3}\mathcal{G}(\R^d)$.
But  $G_1-Y_1$ has the same distribution as $C_d Z_F$, namely the uniform probability measure on $\{C_d v_1,...,C_d v_d\}$.  
The lemma is proved since $C_d\in (\frac{1}{M},M)$ for $M>0$ independent of $d$, and because of Corollary \ref{cor:rescale}.

\end{proof}

We will use a corollary of the work of Marcus-Spielman-Srivastava \cite{Mar15}:
\begin{thm}[\cite{Mar15}]\label{thm:mss}
Let $k$,  $m$, $n$ be positive integers such that $m\geq k$, and let $v_1,...,v_m\in \R^n$ be such that   
$$\|v_i\|^2\leq 1\quad \text{for all $i=1,...,m,$ and}\quad \frac{1}{m}\sum_{i=1}^m v_iv_i^T \leq \frac{1}{k} \Id.$$
Then there exists a partition $\{T_1,...,T_s\}$ of $[m]$ for some integer $s>0$, such that for some universal constant $\tilde{C}\geq 1$, 
$$\tilde{C}^{-1}k< |T_j| <\tilde{C} k \quad \text{and}\quad   \|\frac{1}{|T_j|}\sum_{i\in T_j} v_iv_i^T\| \leq  \frac{\tilde{C}}{k} \quad \text{for each $j=1,...,s$}.$$
\end{thm}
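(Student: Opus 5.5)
We plan to derive the statement from the Kadison--Singer / Weaver form of \cite[Corollary 1.5]{Mar15}. In that form: if $u_1,\dots,u_N$ satisfy $\sum_i u_iu_i^T\preceq \Id$ and $\|u_i\|^2\le\varepsilon$, then for every $r$ there is a partition into $r$ parts with $\|\sum_{i\in T}u_iu_i^T\|\le (r^{-1/2}+\sqrt{\varepsilon})^2$. This gives the operator-norm bound directly. The real difficulty is controlling the \emph{sizes} $|T_j|$ from both sides. To get both sides we run a recursive halving ($r=2$) in which a cardinality coordinate is tracked together with the matrix.

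\textbf{Step 1 (augmentation).} At a given stage we hold an index set $T$ with $|T|=N\ge \tilde C_0 k$. Put $w_i:=(v_i,1)\in\R^{n+1}$ and
$$A_T:=\sum_{i\in T}v_iv_i^T+\tfrac{N}{k}\Id \;\oplus\; N .$$
The block $\tfrac{N}{k}\Id$ acts on $\R^n$ and the scalar $N$ acts on the last coordinate. The inductive hypothesis will be $\|\sum_{T}v_iv_i^T\|\le C_1N/k$; at the start this is exactly the assumption with $N=m$, $C_1=1$. Set $u_i:=A_T^{-1/2}w_i$. Then $\sum_T u_iu_i^T\preceq \Id$ after a harmless rescaling by $2$, since the cross terms are dominated via $ww^T\preceq 2(v v^T\oplus 1)$. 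Moreover, since $A_T\succeq \tfrac Nk\Id\oplus N$ and $\|v_i\|\le1$, we have $\|u_i\|^2\le 2k/N=:\varepsilon$.

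\textbf{Step 2 (balanced split).} Apply the $r=2$ case to obtain $T=T'\sqcup T''$ with
$$\Bigl\|\sum_{T'}u_iu_i^T\Bigr\|\le \tfrac12+O(\sqrt{k/N}),$$
and the same for $T''$. Reading off the last coordinate gives $|T'|,|T''|\le(\tfrac12+O(\sqrt{k/N}))N$. Since the two counts sum to $N$, both are also $\ge(\tfrac12-O(\sqrt{k/N}))N$. Reading off the $\R^n$ block gives
$$\Bigl\|\sum_{T'}v_iv_i^T\Bigr\|\le\Bigl(\tfrac12+O(\sqrt{k/N})\Bigr)(C_1+1)\frac Nk .$$
Here the $+1$ comes from the padding $\tfrac Nk\Id$.

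\textbf{Step 3 (iteration and bookkeeping).} Repeat the split on every part until its size first drops below $\tilde C_0k$; the parts then have size in $[\tilde C_0 k/3,\tilde C_0k]$. Along a branch the sizes decrease geometrically, so $\sum_\ell\sqrt{k/N_\ell}$ is dominated by its last term and is $O(\tilde C_0^{-1/2})$. Consequently the multiplicative errors $\prod_\ell(1+O(\sqrt{k/N_\ell}))$ stay bounded, and $|T_j|=\Theta(k)$ follows.

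The matrix constant is the main obstacle. Naively the recursion $C_{\ell+1}=(1+\delta_\ell)(C_\ell+1)\cdot\tfrac{N_\ell}{2N_{\ell+1}}$ adds $+1$ at every level, which would give $C\sim\log(m/k)$. To avoid this, I would first try to replace the padding $\tfrac Nk\Id$ by $\tfrac{\eta N}{k}\Id$ with $\eta_\ell$ summable, e.g. $\eta_\ell=\sqrt{k/N_\ell}$. Then $\varepsilon\le 2k/(\eta N)=O(\sqrt{k/N})$, the error $\sqrt\varepsilon=O((k/N)^{1/4})$ is still summable along the geometric chain, and the additive contributions $\sum_\ell\eta_\ell$ remain $O(1)$. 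If this fails, the fallback is to use the general-$r$ statement once, with $r\approx m/(\tilde C_0k)$. That yields the norm bound $O(1)=O(|T_j|/k)$ only when $|T_j|\gtrsim k$, and the cardinality coordinate must then be used in the same one-shot way. Finally, if some terminal part is degenerate or $m/k$ is not large, we simply take a single part, since $m\ge k$ and $\tfrac1m\sum v_iv_i^T\preceq\tfrac1k\Id$ already give the claim when $m=\Theta(k)$.
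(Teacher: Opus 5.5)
Your recursive-halving strategy with a cardinality coordinate can be made to work, but Steps 1--2 as written have a genuine gap: the ``harmless rescaling by $2$'' is exactly the factor the halving needs. You only know $\sum_T w_iw_i^T\preceq 2A_T$, so the $r=2$ case of \cite[Corollary 1.5]{Mar15} must be applied to $A_T^{-1/2}w_i/\sqrt2$. What it returns is $\sum_{T'}w_iw_i^T\preceq 2\bigl(\tfrac1{\sqrt2}+\sqrt{\varepsilon/2}\bigr)^2A_T=(1+O(\sqrt{k/N}))A_T$, not $(\tfrac12+O(\sqrt{k/N}))A_T$. Compressing to the last coordinate then gives only $|T'|\le(1+O(\sqrt{k/N}))N$, which is vacuous. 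The $\R^n$ block likewise gives no halving of $\sum_{T'}v_iv_i^T$.

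This is a real failure, not slack in the estimate. If $\sum_{i\in T}v_i=0$, the unrescaled family already has total $\preceq\Id$, so the rescaled one has total $\preceq\tfrac12\Id$. The black-box conclusion, after padding with dummy vectors, is then consistent with $T'=T$, $T''=\varnothing$. Even granting balanced sizes, the matrix constant would obey $C_{\ell+1}\approx 2(C_\ell+\eta_\ell)$ and grow like $m/k$ over $\log_2(m/k)$ levels. So this factor, not the additive padding you identified, is the main obstacle.

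The fix is to normalize by the true Gram matrix, cross terms included, rather than by a block-diagonal majorant. Take $M_T:=\sum_{i\in T}w_iw_i^T+\tfrac{\eta N}{k}(\Id_n\oplus 0)$. Then $\sum_T(M_T^{-1/2}w_i)(M_T^{-1/2}w_i)^T\preceq\Id$ with no loss. Testing against $x=(y,t)$, after centering $\langle y,v_j\rangle$ at its average over $T$, gives the leverage bound $w_i^TM_T^{-1}w_i\le\tfrac{8k}{\eta N}+\tfrac2N$. The $r=2$ case now yields $\sum_{T'}w_iw_i^T\preceq(\tfrac12+O(\sqrt\varepsilon))M_T$. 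Compressing to $e_{n+1}$ and to $\R^n$ gives exactly your two claims, with $\eta N/k$ in place of $N/k$. With your summable choice $\eta_\ell=\sqrt{k/N_\ell}$, one gets $C_{\ell+1}\le(1+O((k/N_\ell)^{1/4}))(C_\ell+\eta_\ell)$, which closes, and the two-part split gives lower bounds on sizes for free. That would be a genuinely different route from the paper.

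Your fallback is incomplete as stated. A one-shot partition into $r\approx m/(\tilde C_0k)$ parts only bounds each $|T_j|$ from above, and individual parts may be tiny or empty. The paper does use a one-shot partition. It appends a $k$-dimensional cardinality block $u_i=e_j$ for $i\equiv j\pmod k$ and takes $r=\lfloor m/k\rfloor$; constant losses like your factor $2$ are harmless there because they occur only once. It reads $|S_j|\le 50k$ off the trace of that block. Pigeonhole then gives at least $\lfloor r/100\rfloor+1$ parts of size $\ge k/3$, and the remaining parts are merged into these in groups of at most $100$. That merging step is what your fallback is missing.
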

\begin{proof}
Let $e_1,...,e_k$ be the standard basis of $\R^k$.
Consider the $m$ vectors $u_1,...,u_m\in \R^k$ defined as $u_i = e_j$ if $i$ equals $j\in \{1,...,k\}$ modulo $k$. One checks that
\begin{equation}\label{nondeg}
\frac{1}{2k} \Id \leq \frac{1}{m}\sum_{i=1}^m u_iu_i^T \leq  \frac{2}{k} \Id.
\end{equation}
Set $w_i:=(u_i,v_i)\in \R^{k+n}$. 
Since $\|v_i\|^2\leq 1$ and $\|u_i\|^2 = 1$, we have $1 \leq \|w_i\|^2\leq 2$. Moreover, using Cauchy-Schwarz, one readily checks from (\ref{nondeg}) that
$\frac{1}{m}\sum_{i=1}^m w_iw_i^T \leq  \frac{6}{k} \Id.$
Next, we  apply \cite[Corollary 1.5]{Mar15} (which holds more generally for sub-isotropic families of vectors, as can be seen by adding dummy vectors)
to the $w_i$'s and to $r:= \lfloor \frac{m}{k} \rfloor$: we get a partition $\{S_1,...,S_r\}$ of $[m]$ such that for each $j=1,...,r$,
\begin{equation}\label{bd sj}
 \|\sum_{i\in S_j} w_iw_i^T\| \leq  50.
 \end{equation}
This upper bound implies $\sum_{i\in S_j} u_iu_i^T \leq 50\Id$ which gives after taking the trace in $\R^k$:
\begin{equation}\label{10k}
\sum_{i\in S_j} \|u_i\|^2 = |S_j| \leq 50k.
\end{equation}
 On the other hand, $\sum_{j=1}^r |S_j| =m\geq rk$ so by (\ref{10k}), after renumbering the $S_i$'s so that $|S_j|$ is nonincreasing in $i$, we have for all $t=1,...,\lfloor \frac{r}{100}\rfloor+1$,  
\begin{equation}\label{si large}
|S_t|\geq \frac{k}{3}.
\end{equation}
We can arbitrarily partition $\{1,...,r\}\setminus \{1,...,\lfloor \frac{r}{100}\rfloor+1\}$  into subsets $O_1,...,O_{\lfloor \frac{r}{100}\rfloor+1}$ of size at most $100$, so that by (\ref{10k}), for $t=1,...,\lfloor \frac{r}{100}\rfloor+1$, 
\begin{equation}\label{1000k}
|\bigcup_{j\in O_t} S_j| \leq 5000k.
\end{equation}
For $t=1,...,\lfloor \frac{r}{100}\rfloor+1$, define
$T_t:= S_t\cup \bigcup_{j\in O_t} S_j.$
Then by (\ref{10k}), (\ref{si large}) and (\ref{1000k}), we have $\frac{k}{3}< |T_t| <5050k$. 
Finally this bound, the fact that $|O_t|\leq 100$, and (\ref{bd sj})  imply that $\|\frac{1}{|T_t|}\sum_{i\in T_t} v_iv_i^T\| \leq  \frac{3\times 5050}{k} $ as desired.

\end{proof}

\begin{proof}[Proof of Theorem \ref{thm:general}]

By an approximation and compactness argument, we can assume that $X$ has finite support $\supp X\subset \R^n$ and that $X$ is the uniform probability measure on $\supp X$.  
Fix $\epsilon>0$ and suppose that $\supp X= \{v'_1,...,v'_m\}\subset \R^n$ and  $m:=|\supp X|$. By adding copies of the $v'_i$'s, we can assume that $m\geq \lfloor e^{\Lambda^2}\rfloor$.
By our assumptions on $X$,
$$\|v'_i\|\leq \Lambda\quad \text{for all $i=1,...,m,$ and}\quad  \frac{1}{m} \sum_{i=1}^m (v'_i)(v'_i)^T \leq \Lambda^2e^{-\Lambda^2} \Id.$$
By Theorem \ref{thm:mss} applied to $v_i:=v'_i/\Lambda$ and  $k:= \lfloor e^{\Lambda^2}\rfloor$,
there exists a partition $\{T_1,...,T_s\}$ of $[m]$ for some integer $s>0$, such that for some universal $\tilde{C}\geq 1$, for each $j=1,...,s$,
$$\tilde{C}^{-1} e^{\Lambda^2}<|T_j| <\tilde{C} e^{\Lambda^2} \quad \text{and}\quad   \|\sum_{i\in T_j} (v'_i)(v'_i)^T\| \leq  \tilde{C} \Lambda^2 e^{-\Lambda^2} |T_j|\leq \tilde{C}^2 (\log |T_j|+\log \tilde{C})$$
where the last inequality follows from the upper and lower bounds on $|T_j|$.
But by basic linear algebra \cite[Definition 1.15, Lemma 1.3]{Cas13},
the upper bound for $\|\sum_{i\in T_j} (v'_i)(v'_i)^T\|$ means that for some universal integer $C_0>0$,  for each $j=1,...,s$, setting $d_j:=|T_j|$ and fixing an arbitrary bijection $\sigma_j:T_j\to [d_j]$, there is a linear map $F_j:\R^{d_j}\to \R^n$ with $\|F_j\|\leq C_0$ such that 
$$\forall i\in T_j,\quad v'_i = F_j(\sqrt{\log d_j}\, e_{\sigma_j(i)}) \quad \text{where $(e_1,...,e_{d_j})$ is the standard basis of $\R^{d_j}$}.$$
Let $Z_j$ be the random vector uniformly distributed on $\{v'_i\}_{i\in T_j}$, defined as independent random vectors on a common probability space.
Note that 
$$X= \sum_{j=1}^s \mathbf{1}_{\{\mu = j\}} Z_j$$ where $\mu$ is a random variable independent from the $Z_j$'s, and $\mu$ is equal to $j$ with probability $|T_j|/{m}$. An easy computation shows that $\|\E[Z_j]\|\leq C_0 \quad \text{for all $j\in \{1,...,s\}$}$.
By Lemma \ref{bessel} applied to the dimension $d_j$ and the linear map $F_j$, for all $j\in \{1,...,s\}$,  $Z_j - \E[Z_j] \in \sum^{q_0}\mathcal{G}(\R^n)$ for some universal integer $q_0>0$. 
By Corollary \ref{cor:bdd support II}, $X\in \sum^{q_0+10C_0}\mathcal{G}(\R^n)$
 and the theorem is proved.

\end{proof}

\begin{rem} [Subgaussian = finite sum of Gaussians] \label{rem:charac}
Using that any $1$-subgaussian random vector $X$ in $\R^n$  has rapidly decaying tail probabilities, after rescaling $X$ by a constant depending on $n$, we obtain a random vector which is equal to a mixture of random vectors satisfying the assumptions of Theorem \ref{thm:general}. Thus Theorem \ref{thm:general} already implies that any $1$-subgaussian random vector is the sum of finitely many standard Gaussian vectors. The number of standard Gaussian vectors given by 
this argument is $O(\sqrt{ n /\log n})$. 
\textcolor{RedViolet}{\textbf{Added May 2026:} Of course, the follow-up paper \cite{HST26} now gives the optimal bound $O(1)$.}
\end{rem}

\subsection{Further discussions}

\subsubsection{Geometric interpretation}\label{subsection:geometric}
Problem \ref{S} aims to provide fundamental geometric information about the space of Gaussian random vectors in $\R^n$. 
Let $\Omega$ be $[0,1]$ endowed with the Lebesgue measure.
Let $\mathcal{G}(\R^n)\subset L^2(\Omega,\R^n)$ be the set of all $n$-dimensional standard Gaussian vectors in $\R^n$. Let $\overline{\conv}(\mathcal{G}(\R^n))$ be the closure of its convex hull, whose points are $1$-subgaussian random vectors \cite{Bul00}, and which conversely contains the set of $1$-subgaussian vectors in $\R^n$ up to scaling by universal constant, see Subsection \ref{rem:un} below. 
Theorem \ref{thm:1d} implies that for some universal integer $L>0$,
$$\frac{1}{L}\overline{\conv}( \mathcal{G}(\R)) \subset  \mathcal{G}(\R)+\mathcal{G}(\R)+\mathcal{G}(\R).$$
Problem \ref{S} asks whether there is a universal integer $q>0$ such that in any dimension $n\geq 2$,
$$\overline{\conv}( \mathcal{G}(\R^n))  \subset {\underbrace{\vphantom{\Big|}\mathcal{G}(\R^n)+\cdots+\mathcal{G}(\R^n)}_{q\ \text{times}}}.$$

\subsubsection{The convex hull of standard Gaussian vectors} \label{rem:un}
Averages (i.e. finite convex linear combinations) of standard Gaussian random vectors are well-known to be  $\kappa'$-subgaussian for some universal $\kappa'>0$, see \cite[Theorem 1.2]{Bul00} \cite[Exercise 2.42]{Ver18}. In this subsection, we explain that conversely,
for a universal $\kappa>0$, any $\kappa$-subgaussian random vector $X$ in $\R^n$ can be approximated by an average  of standard Gaussian random vectors. In other words, $\kappa$-subgaussian random vectors essentially form the ``convex hull'' of the space of standard Gaussian vectors.  
This important fact readily follows from (and is in fact equivalent to) the following strengthening of the subgaussian comparison theorem of Talagrand observed by van Handel \cite[Corollary 1.2]{Van25}: for a universal $\kappa>0$, any $\kappa$-subgaussian random vector $X$ in $\R^n$ is dominated in the convex order by a standard Gaussian vector.
Alternatively, here is a quick sketch of a proof  using \cite{Tal21} and  a tensorization trick in the spirit of the proof of Theorem \ref{thm:equivalence}.
Fix $\delta>0$. If $X$ is $1$-subgaussian in $\R^n$, and $M$ is a large integer, consider the set $\mathcal{S}$ of points $x=(x_1,...,x_M)$ in $\R^{nM}$ such that the empirical distribution of the $x_i$ is $1$-Wasserstein $\delta$-close to the distribution of $X$. Consider i.i.d. copies of $X$, called $X_1,...,X_M$. 
Then with high probability by Glivenko-Cantelli's theorem, $\hat{X}:=(X_1,...,X_M)$ is in $\mathcal{S}$ when $M$ is large. Next, consider the convex hull $\conv(\mathcal{G})$ of the set $\mathcal{G}$ of vectors $x=(x_1,...,x_M)$ in $\R^{nM}$ such that the empirical distribution of the $x_i$ is $1$-Wasserstein $\delta$-close to the distribution of the standard Gaussian vector  in $\R^n$. Then by Glivenko-Cantelli again, the Gaussian volume of $\conv(\mathcal{G})$ is at least $1/2$ for $M$ large. 
Since $\hat{X}$ is $1$-subgaussian in $\R^{nM}$, by Theorem \ref{tal}, $\kappa \,\hat{X}$ belongs to $\conv(\mathcal{G})$ with uniformly positive probability ($\kappa>0$ is universal). This means that $\kappa\,\hat{X}$ is a convex combination of some points in $\mathcal{G}$, which implies the desired result after taking $\delta\to 0$.

\subsubsection{Simple random vectors}\label{rem:generalize mss?}

Lemma \ref{bessel} generalizes as follows. Consider a partition of $\R^d$ into convex regions $\{R_k\}_{k\geq 1}$.
Let $G\sim \mathcal{N}(0,I_d)$, let $p_k:=\mathbb{P}[G \in R_k]$, let  $x_k:=\E[\, G\, | G \in R_k] \in \R^d$, let $S$ be the random vector in $\R^d$ equal to $x_k$ with probability $p_k$, 
Then $S\in \sum^3\mathcal{G}(\R^n)$.
Moreover, if $F:\R^d \to \R^n$ is any linear map with $\|F\|\leq 1$ and  
if $\hat{Z}$ is the random vector equal to $F(S)$, 
then $\hat{Z}\in \sum^q\mathcal{G}(\R^n)$ for some universal $q>0$. Let us call any such $\hat{Z}$ a \emph{simple random vector}.
Now, the Marcus-Spielman-Srivastava theorem (Theorem \ref{thm:mss}) implies that, up to a uniform scaling, any random vector $X$ in $\R^d$ satisfying\footnote{The symmetry assumption $X\sim -X$ is for the sake of simplicity.}  $X\sim -X$, $\|X\|\leq \Lambda\in [1,\infty)$ almost surely and $\|\Cov X\|\leq \Lambda^2 e^{-\Lambda^2}$, can be decomposed into a mixture of simple random vectors (see proof of Theorem \ref{thm:general}).
To settle Problem \ref{S}, it would suffice to establish a subgaussian analogue: for $\kappa>0$ small enough, for any $d\geq1$ and any $\kappa$-subgaussian random vector $X$ in $\R^d$ with $X\sim -X$, is $X$ a mixture of simple random vectors? Is there at least one simple random vector whose support is contained in the support of $X$?

\textcolor{RedViolet}{\textbf{Added May 2026:} This strategy does in fact lead to a full solution to Problem \ref{S}! This is outlined in \cite[Appendix B]{HST26}. 
Simple random vectors defined as above are directly connected to ``Laguerre tessellations'', which are well-studied in optimal transport and materials science, see e.g. \cite{BGMN25}. What can be shown is that for $\kappa>0$ small enough, any $\kappa$-subgaussian random vector in $\R^n$ can be approximated by simple random vectors which admit a very simple description in terms of orthogonal projections of Laguerre tessellations \cite[Appendix B]{HST26}.}

\section{How large are sums of large sets?}

We explain in this section some  geometric consequences  of Section \ref{section:expressive}. The general principle is that the expressivity of sums of Gaussian vectors accounts for the size of sums of large sets in Gaussian spaces.

\subsection{Large permutation invariant sets}\label{subsection:permutation}
In \cite[Proposition 2.10]{Tal95}, Talagrand outlined an explanation of how Theorem \ref{thm:1d} implies Corollary \ref{cor:permutation}. However, the proof of \cite[Proposition 2.10]{Tal95} crucially relies on a uniform $O(1)$ bound \cite[Lemma 2.11]{Tal95}, which is not correct\footnote{I would like to thank Samuel Johnston for showing me the reference \cite{BF20} and an unpublished note, which is how I realized the issue with \cite[Lemma 2.11]{Tal95}.}. The right bound $O(\log \log n)$ is given in Lemma \ref{corrrect} below.

Set $\Phi(x):=\gamma_1(-\infty,x)$. Given an integer $n\geq 2$ and real-valued random variables $X_1,...,X_n$, denote by $X^*_1,...,X^*_n$ the random variables given by reordering the $X_i$'s in non-decreasing order.
\begin{lem}\label{corrrect}
Let $n\geq 2$ be a positive integer. Consider probability measures $\theta_i$ on $\R$ ($i=1,...,n$), such that $\sum_{i=1}^n \theta_i = n\gamma_1$. Define $r_i\in \R$ such that $\Phi(r_i)=\frac{i}{n}$ if $i\leq \frac{n}{2}$, and $\Phi(r_i)=\frac{i-1}{n}$ if $i> \frac{n}{2}$. 
Consider independent random variables $X_i$ with respective probability distributions $\theta_i$. Then for some universal constant $C_c\geq 1$,
$$\sum_{i=1}^n \E[|X^*_i - r_i|^2]\leq C_c (\log \log n +1).$$  
\end{lem}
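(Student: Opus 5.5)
The plan is to reduce the statement to concentration bounds for a single counting process. Set
$$N(t):=\#\{i;\ X_i\leq t\}=\sum_{i=1}^n \mathds{1}_{\{X_i\leq t\}}.$$
This is a sum of independent Bernoulli variables (a Poisson–binomial variable). Because $\sum_i\theta_i=n\gamma_1$, its mean is exactly $\E N(t)=n\Phi(t)$ and its variance is at most $n\Phi(t)(1-\Phi(t))$. The crucial point is that Chernoff/Bernstein bounds for sums of independent Bernoullis depend only on the mean (and the variance bound). So $N(t)$ obeys the same tail estimates as in the i.i.d. Gaussian case, and the non-identical distributions $\theta_i$ play no further role.

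The link with order statistics is the identity $\{X^*_i\leq t\}=\{N(t)\geq i\}$. For $t>r_i$ this gives $\mathbb{P}[X^*_i>t]=\mathbb{P}[N(t)<i]$, and symmetrically for $t<r_i$. Then
$$\E|X^*_i-r_i|^2=\int_{r_i}^\infty 2(t-r_i)\,\mathbb{P}[N(t)<i]\,dt+\int_{-\infty}^{r_i}2(r_i-t)\,\mathbb{P}[N(t)\geq i]\,dt.$$
By the symmetry of the definition of $r_i$, I treat only indices $i\leq n/2$ and the lower tail; the others are identical.

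For $t=r_i+s$ with $s>0$ we have $n\Phi(t)-i\geq n\,\varphi(\xi)\,s$ for some $\xi\in[r_i,t]$. When $s\lesssim 1/|r_i|$, this is $\gtrsim s\, n\varphi(r_i)\approx s\, i\,(1+|r_i|)$, using the Mills ratio $\Phi(r)\asymp\varphi(r)/(1+|r|)$ for $r\leq 0$. Bernstein's inequality with variance $\leq n\Phi(t)\approx i$ then gives
$$\mathbb{P}[N(t)<i]\leq \exp\bigl(-c\min(s^2 i(1+r_i^2),\ s\, i(1+|r_i|))\bigr).$$
For $s\gtrsim 1/|r_i|$ the gap $n\Phi(t)-i$ grows geometrically, so the bound decays even faster. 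On the other side ($t<r_i$), $n\Phi(t)$ falls below $i$ at least at Gaussian speed, and the Chernoff upper-tail bound $\mathbb{P}[N(t)\geq i]\leq (e n\Phi(t)/i)^i$ handles it.

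Integrating these bounds gives $\E|X^*_i-r_i|^2\lesssim \frac{1}{i(1+r_i^2)}$ for $i\geq 1$. This matches the heuristic $\operatorname{Var}(X^*_i)\approx \frac{i}{(n\varphi(r_i))^2}$. Since $r_i^2\approx 2\log(n/i)$ for $i\leq n/2$, summing yields
$$\sum_{i\leq n/2}\frac{1}{i(1+\log(n/i))}\lesssim \log\log n+1.$$
The sum is dominated by the range $1\ll i\ll n$; the indices near $n/2$ contribute $O(\sum_{i\sim n} 1/i)=O(1)$.

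The main obstacle is making the integration step uniform in $i$. For the extreme indices ($i$ bounded) the relevant window has width $1/|r_i|\approx 1/\sqrt{\log n}$, and the Bernstein regime switches from Gaussian to exponential behaviour. The convexity of $\log\Phi$ has to be used carefully so that the deviation of $n\Phi(r_i+s)$ from $i$ is bounded below by $c\,i\min(s|r_i|, e^{s|r_i|}-1)$ for all $s>0$. For the small-$t$ side I would first bound $\mathbb{P}[N(t)\geq i]$ via the ratio $\Phi(t)/\Phi(r_i)\leq e^{-|r_i|(r_i-t)}$, then integrate. For $i=1$ I need $\E|X^*_1-r_1|^2=O(1)$, which requires $\mathbb{P}[N(t)=0]\leq e^{-n\Phi(t)}$. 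This holds for the Poisson–binomial because $\prod_j(1-p_j)\leq e^{-\sum_j p_j}$.
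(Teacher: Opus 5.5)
Your plan follows the paper's proof. $N(t)$ is a Poisson--binomial count whose mean $n\Phi(t)$ is fixed by $\sum_k\theta_k=n\gamma_1$, and Chernoff-type bounds only see that mean. Log-concavity of $\Phi$ and the Mills ratio compare $n\Phi(r_i\mp u)$ with $i$, and $\sum_{i\le n/2}\frac{1}{i(1+\log(n/i))}\lesssim\log\log n+1$. Two of your integration steps, however, fail as stated.

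\emph{The side $t<r_i$} (the only side the paper writes out). The bound you name, $\mathbb{P}[N(t)\ge i]\le (en\Phi(t)/i)^i$, exceeds $1$ as long as $n\Phi(t)\ge i/e$, i.e.\ on a window $0\le r_i-t\lesssim(1+|r_i|)^{-1}$. With only the trivial bound there, the contribution of $\{X_i^*<r_i\}$ to $\E|X_i^*-r_i|^2$ is bounded merely by $O((1+|r_i|)^{-2})$, without the factor $1/i$. Since $\sum_{i\le n/2}(1+|r_i|)^{-2}\gtrsim n$, the final sum blows up. Also, your ratio bound $\Phi(t)/\Phi(r_i)\le e^{-|r_i|(r_i-t)}$ is vacuous when $r_i=0$; you need $\varphi/\Phi\ge c(1+|x|)$ on $(-\infty,0]$, as in (\ref{phibasic}). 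The factor $1/i$ comes from a Gaussian regime on this side as well. When $n\Phi(t)\ge i/7$, use $\mathbb{P}[N(t)\ge i]\le\exp(-c\,(i-n\Phi(t))^2/i)$ together with $i-n\Phi(r_i-u)\ge i\,(1-e^{-c(1+|r_i|)u})$. Switch to the Poisson-type bound only below $i/7$. This split is exactly the core of the paper's proof, and Bernstein, which you already use for $t>r_i$, supplies it.

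\emph{The side $t>r_i$.} Bounds expressed through $n\Phi(t)$ and the variance $n\Phi(t)(1-\Phi(t))$ do not decay as $t\to\infty$, because $n\Phi(t)$ saturates at $n$. Bernstein tends to roughly $e^{-c(n-i)}$, and for $i=1$ your $e^{-n\Phi(t)}$ tends to $e^{-n}$. So $\int_{r_i}^\infty 2(t-r_i)\,\mathbb{P}[N(t)<i]\,dt$ is not even shown to be finite. For the same reason, your lower bound $c\,i\min(s|r_i|,e^{s|r_i|}-1)$ on $n\Phi(r_i+s)-i$ cannot hold for all $s>0$, since the left side is at most $n-i$. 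The fix is to apply Chernoff to the exceedance count $n-N(t)$, whose mean $n(1-\Phi(t))$ is also fixed. For $i\le n/2$ this gives $\mathbb{P}[N(t)<i]\le(2e(1-\Phi(t)))^{n/2}$. For $i=1$, writing $p_k=\theta_k((-\infty,t])$, AM--GM gives directly $\prod_k(1-p_k)\le(1-\Phi(t))^n$.

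With these two repairs your argument goes through. Your treatment of $t>r_i$ then covers something the paper's written proof skips: it bounds only $\mathbb{P}[X_i^*-r_i<-t]$ for $i\le n/2$. Reflection exchanges the missing tail with lower tails of $X_j^*$ for $j>n/2$, which the paper does not estimate either.
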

This lemma amends  \cite[Lemma 2.11]{Tal95} and partially generalizes  \cite{DBGU05}\cite[Corollary 6.14]{BL19}\cite[Theorem 1]{BF20} which focus on the cases where $\theta_i =\gamma_1$ for all $i$. As shown in these references, the $O(\log \log n)$ bound above is optimal already when $\theta_i =\gamma_1$ for all $i$. The proof is postponed to the Appendix.

Next, we will need the following corollary of Theorem \ref{thm:general}:
\begin{cor}\label{lem:salvage}
Let $n\geq 1$ and let $A\subset \R^n$ be a permutation invariant set such that $\gamma_n(A)\geq 2/3$. 
Then there exists a universal integer $\hat{q}>0$ such that
$$ B(0,\sqrt{\log n}) \cap \{x_1+...+x_n=0\} \subset A_{(\hat{q})}.$$
\end{cor}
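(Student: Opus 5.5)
Fix a point $x\in B(0,\sqrt{\log n})\cap\{x_1+\dots+x_n=0\}$. The plan is to show that the random vector $X$ obtained by applying a uniformly random coordinate permutation to $x$ is a sum of $q$ standard Gaussian vectors, and then to use permutation invariance of $A$ together with the union bound, as in the proof of $(iii)\Rightarrow(i)$ of Theorem \ref{thm:equivalence}. Concretely, let $\sigma$ be a uniformly random permutation of $[n]$ and set $X:=(x_{\sigma(1)},\dots,x_{\sigma(n)})$. Since $\sum_i x_i=0$, $X$ is centered, and $\|X\|=\|x\|\leq\sqrt{\log n}$ almost surely. The covariance is explicit: $\E[X_iX_j]=\frac{\|x\|^2}{n}$ if $i=j$ and $-\frac{\|x\|^2}{n(n-1)}$ otherwise. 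Hence
$$\Cov X=\frac{\|x\|^2}{n-1}\Big(\Id-\frac{1}{n}\mathbf{1}\mathbf{1}^T\Big), \qquad \|\Cov X\|\leq \frac{2\log n}{n}.$$

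I would then apply Theorem \ref{thm:general} to $X/L$ for a large universal constant $L$, with $\Lambda:=\sqrt{\log n}/L$; one may assume $n$ is large and take $\Lambda\geq1$, otherwise use Corollary \ref{cor:bdd support}. The hypothesis requires
$$\|\Cov (X/L)\|\leq \frac{2\log n}{L^2 n}\leq \Lambda^2e^{-\Lambda^2}=\frac{\log n}{L^2}\,n^{-1/L^2},$$
which holds as soon as $n^{1-1/L^2}\geq 2$, say for $L\geq 2$ and $n\geq 4$. Small $n$ are handled by Corollary \ref{cor:bdd support} together with Lemma \ref{Steinhaus}, or more simply by Lemma \ref{Steinhaus} directly, since then $B(0,\sqrt{\log n})$ lies in a bounded-fold sum. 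This gives $X/L\in\Sigma^q\mathcal{G}(\R^n)$, and by Corollary \ref{cor:rescale}, $X\in\Sigma^{q'}\mathcal{G}(\R^n)$ with $q'=(L+2)q$ universal.

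Next, write $X=G_1+\dots+G_{q'}$. By Lemmas \ref{Steinhaus} and \ref{cor:vol sum}, there is a universal $k_1$ with $\gamma_n(A_{(k_1)})\geq 1-\frac{1}{2q'}$, and $A_{(k_1)}$ is still permutation invariant. By the union bound, with positive probability every $G_j$ lies in $A_{(k_1)}$. So some value $\sigma_0 x$ of $X$, a permutation of $x$, lies in $A_{(k_1q')}$. Permutation invariance of $A_{(k_1q')}$ then gives $x\in A_{(k_1q')}$, so we may take $\hat q:=k_1q'$.

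The main point to check is this last step: we need $\sigma_0 x$ to be an actual atom of $X$, not merely a point of its support in some limiting sense. Since $X$ is finitely supported, any realization on an event of positive probability is one of its atoms, so the argument goes through. I expect no genuine obstacle; the work is only in verifying the covariance bound against the exact trade-off $\Lambda^2e^{-\Lambda^2}$ in Theorem \ref{thm:general}, which the rescaling by $L$ handles.
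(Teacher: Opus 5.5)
Your proposal is correct and follows the paper's proof essentially step for step. You symmetrize $x$ over all coordinate permutations and bound $\|\Cov X\|=O(\log n/n)$; you compute the covariance exactly, where the paper uses a trace argument. You then apply Theorem \ref{thm:general} and conclude with the union bound, using that $\gamma_n(A_{(k_1)})$ is close to $1$, together with permutation invariance. Your explicit rescaling by $L$ and your separate treatment of small $n$ via Lemma \ref{Steinhaus} only make precise some constant-level details that the paper leaves implicit.
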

\begin{proof}
Pick a nonzero vector $(y_1,...,y_n)\in B(0,\sqrt{\log n}) \cap \{x_1+...+x_n=0\}$.
Consider the random vector $Y$ with probability distribution $\frac{1}{n!}\sum_{\sigma} \delta_{(y_{\sigma(1)},...,y_{\sigma(n)})}$ where the sum is over all permutations of $[n]$. Then by construction, $\|Y\|\leq \sqrt{\log n}$ almost surely. By permutation invariance, $\Cov Y$ has two eigenvalues: $0$ which corresponds to the diagonal direction $(1,...,1)$ and $\lambda>0$ whose corresponding eigenspace is $\{x_1+...+x_n=0\} $. Since $\Tr \Cov Y = \E[\|Y\|^2]\leq \log n$ by the norm bound, we deduce that $\lambda = O(\frac{\log n}{n})$. In other words $\|\Cov Y\| = O(\frac{\log n}{n})$. We can then apply Theorem \ref{thm:general} and get $Y= G_1+...+G_{q'}$ for some universal $q'>0$ and $G_i\sim \mathcal{N}(0,I_n)$. On the other hand, Lemmas \ref{Steinhaus} and \ref{cor:vol sum} imply that $\gamma_n(A_{(q'')})> 1-\frac{1}{q'}$ for some universal $q''>0$.
Thus the union bound ensures the existence of points $g_1,...,g_{q'}\in A_{(q'')}$ such that $(y_{\sigma(1)},...,y_{\sigma(n)}) = g_1+...+g_{q'}$
for some permutation $\sigma$ of $[n]$, which means that $(y_{\sigma(1)},...,y_{\sigma(n)}) \in A_{(q'q'')}$. By permutation invariance of $A$, $y\in A_{(q'q'')}$ and the lemma follows after taking $\hat{q}:=q'q''$.

\end{proof}

\begin{proof}[Proof of Corollary \ref{cor:permutation}]
We will borrow the original argument of \cite[Proposition 2.10]{Tal95} but we will add a new ingredient, Corollary \ref{lem:salvage}, to repair the end of the argument. 
Let $n\geq 2$ and let $A$ be a permutation invariant set with $\gamma_n(A)\geq 2/3$.  
Let $\Omega\subset \R^n$ be the set of all points $y=(y_1,...,y_n)$ such that $y\in \{x_1+...+x_n=0\}$ and $\frac{1}{n}\sum e^{y_i^2/10}\leq 2$ (in particular, the random variable $Y$ with probability distribution
$\frac{1}{n}\sum \delta_{y_i}$ is centered and $\alpha$-subgaussian for some universal $\alpha>0$, see \cite[Proposition 2.6.1]{Ver18}). 
For example, if $r_i\in \R$ are defined as in Lemma \ref{corrrect}, then $(r_1,...,r_n)\in \Omega$  by the Mills ratio inequalities (\ref{mills}). 
Denote as usual the Euclidean $R$-ball centered at $0$ by $B(0,R)$.  
Let $C_c\geq 1$ be the universal constant given in Lemma \ref{corrrect}.
Set 
$$\hat{B}:=B(0, 100\sqrt{C_c}\sqrt{\log \log n+1})\cap \{x_1+..+x_n=0\},$$
\begin{align}\label{defKK}
K:= \Omega+\hat{B}+B(0,100).
\end{align}
This set $K$ is a convex body since $\Omega$ is convex (see \cite[Theorem 1.2]{Bul00} \cite[Exercise 2.42]{Ver18}). By the central limit theorem and by Lemma \ref{corrrect} applied to the case  $\theta_i=\gamma_1$ (or by \cite[Corollary 6.14]{BL19} \cite[Theorem 1]{BF20}), we have $\gamma_n(K)\geq 1/2$. 

Next, it remains to show that $K\subset A_{(q)}$ for some universal integer $q>0$. Since $\log\log n = o(\log n)$, Corollary \ref{lem:salvage} (combined with Lemma \ref{Steinhaus}) yields $\hat{B}+B(0,100) \subset A_{(q_0)}$ for some universal $q_0>0$. So to finish the proof, it would be enough to prove that 
\begin{align}\label{333}
\Omega\subset \big(A+\hat{B}+B(0,100)\big)_{(q)}
\end{align} 
for some universal $q>0$. 
Fix a point $y=(y_1,...,y_n)\in \Omega\subset \{x_1+...+x_n=0\}$. Let us assume for simplicity that all the $y_i$ are different (the general case will follow from a compactness argument).
By\footnote{To prove Corollary \ref{cor:permutation}, we could also use Remark \ref{rem:charac} in dimension $1$,  instead of Theorem \ref{thm:1d}.} Theorem \ref{thm:1d}, there is a universal $\kappa'\in (0,1)$ such that the random vector $X$ with probability distribution
$\nu:= \frac{1}{n}\sum \delta_{\kappa'y_i}$ is the sum of three standard Gaussian random variables. This means that there is a  probability measure $\mu$ on $\R^3$ such that each of the three $1$-dimensional marginals of $\mu$ is $\gamma_1$, and $\nu$ is the pushforward of $\mu$ by the map $\mathcal{S}: (s,t,u)\mapsto s+t+u$. For each $i=1,...,n$, let $\mu_i$ be the probability measure given by $\mu_i(U) := n\mu(U\cap \mathcal{S}^{-1}(\kappa'y_i))$ for any Borel set $U\subset \R^{3}$. Consider the product probability measure $\bar{\mu}:= \otimes_{i=1}^n \mu_i$ on $\R^{3n}=(\R^n)^{3}$.
By construction, for $\bar{\mu}$-almost every point $(x_1,...,x_{3n})\in \R^{3n}$, we have $\kappa'y= (x_1,...,x_n)+(x_{n+1},...,x_{2n})+(x_{2n+1},...,x_{3n})$. Thus, in view of our goal (\ref{333}), it would suffice to show:
\begin{align}\label{444}
\begin{split}
& \text{with positive $\bar{\mu}$-probability, a random $x=(x_1,...,x_{3n})\in\R^{3n}$ satisfies}\\
&\quad 
(x_{(j-1)n+1},...,x_{(j-1)n+n})\in A+\hat{B}+B(0,100) \quad \text{for any $j=1,2,3$}.
\end{split}
\end{align}  
To do so, let $\theta_{i,1}, \theta_{i,2}, \theta_{i,3}$ be the $1$-dimensional marginals of $\mu_i$. By construction, for each $j=1,2,3$,
\begin{align}\label{sumgamma1}
\sum_{i=1}^n \theta_{i,j} = n\gamma_1.
\end{align} 
For $j=1,2,3$, consider the product probability measure $\bar{\theta_j}:= \otimes_{i=1}^n \theta_{i,j}$ on $\R^{n}$, namely the marginals  of $\bar{\mu}$ on each factor $\R^n$.
To show (\ref{444}), it is enough to check that for each $j=1,2,3$:
\begin{align}\label{555}
\begin{split}
&\text{with $\bar{\theta_j}$-probability larger than $2/3$, a random $z\in\R^n$ satisfies }\\ & \quad  z \in A+\hat{B}+B(0,100).
\end{split}
\end{align} 
For $j=1,2,3$, for a random point $z=(z_1,...,z_n)\in\R^n$ with probability distribution $\bar{\theta_j}$, 
\begin{align}\label{totvar}
\E_{\bar{\theta_j}}(\frac{1}{n}(z_1+...+z_n)^2) = \frac{1}{n} \sum \Var_{\bar{\theta_j}}(z_i)\leq 1
\end{align} 
where the  equality uses that $z_i\sim \theta_{i,j}$ are independent, and the inequality uses (\ref{sumgamma1}) and the law of total variance.
Let $r_i$ be defined as in Lemma \ref{corrrect}. 
Thanks to (\ref{sumgamma1}), by applying Lemma \ref{corrrect} to the case $\theta_i=\theta_{i,j}$, and by (\ref{totvar}), for a random $z\in \R^n$, with  $\bar{\theta_j}$-probability larger than $2/3$,
 the Euclidean distance between $z$ and $\{x_1+...+x_n=0\}$ is at most $50$,
 and 
$\|z -(r_{\sigma(1)},...,r_{\sigma(n)})\|\leq 50 \sqrt{C_c} \sqrt{\log \log n +1}$ for some permutation $\sigma$ of $[n]$.
Similarly since $\gamma_n(A)\geq 2/3$, by the central limit theorem, by Lemma \ref{corrrect} applied to the case $\theta_i=\gamma_1$ (or by \cite{BL19}\cite{BF20}), and by permutation invariance of $A$,
 there is some point $a\in A$ whose Euclidean distance to $\{x_1+...+x_n=0\}$ is at most $50$,
and 
$\|a -(r_{\sigma(1)},...,r_{\sigma(n)})\|\leq 50 \sqrt{C_c} \sqrt{\log \log n +1}$. 
Combining these estimates together, we get (\ref{555}). The proof is finished.

\end{proof}

From (\ref{defKK}), we see that the convex body $K$ obtained in Corollary \ref{cor:permutation} can be chosen \emph{explicitly}, as a set of points whose coordinates approximate a $\kappa$-subgaussian variable. In contrast, the ellipsoid of Corollary \ref{cor:large ellipsoid} will not be constructed explicitly. 

\subsection{Ellipsoids in sums of large sets}

Let us move on to Corollary \ref{cor:large ellipsoid}. In preparation for the proof, we will need the next elementary lemma, which is the ellipsoid version of the more elaborate \cite[Theorem 1.2]{Dad19}. In this paper, an \emph{ellipsoid} $E$ is by definition any set of the form 
$$E:=\{x^TQx \leq 1\}\subset \R^n$$
where $Q$ is some symmetric positive semidefinite $n$-by-$n$ matrix. We define the trace $\Tr E$ of the ellipsoid $E$  as the trace $\Tr Q$. 
\begin{lem}[Ellipsoid and Covariance]\label{cor:ellcov}
Given a closed symmetric set  $S\subset \R^n$ and  $\tau>0$, the following are equivalent:
\begin{enumerate}
\item Any ellipsoid $E$ with trace $\Tr E=\tau$ intersects $S$.
\item There is a random vector $X$ supported on $S$ such that $$X\sim -X\quad\text{and}\quad \|\Cov X\|\leq \tau^{-1} .$$
\end{enumerate}
\end{lem}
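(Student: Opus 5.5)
For the direction (2)$\Rightarrow$(1) I would average. Let $X$ be as in (2) and let $E=\{x^TQx\le 1\}$ with $\Tr Q=\tau$. Since $X\sim -X$, $X$ is centered, so $\E[XX^T]=\Cov X\le \tau^{-1}\Id$. Then
$$\E[X^TQX]=\Tr(Q\,\Cov X)\le \|\Cov X\|\,\Tr Q\le 1,$$
where I use $Q\ge 0$. Hence some point $x$ of the support of $X$ satisfies $x^TQx\le 1$. Because $S$ is closed, $\supp X\subset S$, so this $x$ lies in $E\cap S$.

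For (1)$\Rightarrow$(2) I would use a minimax argument for the bilinear payoff
$$f(\mu,Q):=\int x^TQx\,d\mu(x).$$
Here $\mu$ ranges over the convex set $\mathcal P_S$ of finitely supported probability measures on $S$, and $Q$ ranges over the compact convex set $\mathcal Q_\tau:=\{Q\ge 0,\ \Tr Q=\tau\}$. Hypothesis (1) says exactly that for every $Q\in\mathcal Q_\tau$ there is $x\in S$ with $x^TQx\le 1$. So, using Dirac masses,
$$\sup_{Q\in\mathcal Q_\tau}\inf_{\mu\in\mathcal P_S} f(\mu,Q)\le 1.$$
The payoff is linear in each variable and continuous in $Q$, and $\mathcal Q_\tau$ is compact. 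Sion's (or Ky Fan's) minimax theorem therefore gives
$$\inf_{\mu}\sup_{Q} f(\mu,Q)\le 1.$$
Moreover, $\sup_{Q\in\mathcal Q_\tau} f(\mu,Q)=\tau\,\|M_\mu\|$, where $M_\mu:=\int xx^T\,d\mu$, since the supremum of $\Tr(QM)$ over $\mathcal Q_\tau$ is $\tau\lambda_{\max}(M)$. So for every $k$ there is $\mu_k\in\mathcal P_S$ with $\|M_{\mu_k}\|\le (1+\frac1k)\tau^{-1}$.

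The remaining issue, which I expect to be the only delicate point, is that the infimum need not be attained when $S$ is unbounded, so a limit must be extracted. In fixed dimension $n$ we have $\int\|x\|^2d\mu_k=\Tr M_{\mu_k}\le 2n\tau^{-1}$. By Chebyshev the family $(\mu_k)$ is tight, and Prokhorov gives a weakly convergent subsequence $\mu_k\to\mu$. The limit $\mu$ is supported on $S$ because $S$ is closed (portmanteau). For each unit vector $v$, the function $x\mapsto\langle x,v\rangle^2$ is nonnegative and continuous, so
$$v^TM_\mu v\le\liminf_k v^TM_{\mu_k}v\le\tau^{-1},$$
and hence $\|M_\mu\|\le\tau^{-1}$.

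Finally I would symmetrize. Let $X$ have law $\frac12(\mu+(-1)_*\mu)$. This law is supported on $S\cup(-S)=S$, since $S$ is symmetric, and $X\sim -X$. The second moment matrix is unchanged, so $\E[XX^T]=M_\mu$. Since $X$ is centered, $\Cov X=\E[XX^T]=M_\mu$, so $\|\Cov X\|\le\tau^{-1}$, which is (2).
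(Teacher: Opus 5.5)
Your proof is correct and uses the same minimax duality as the paper: trace-$\tau$ positive semidefinite matrices $Q$ are played against probability measures on $S$, with $\sup_{Q}\Tr(QM)=\tau\|M\|$ and the inner infimum over measures reducing to $\inf_{s\in S}s^TQs$. The differences are only technical: you prove (2)$\Rightarrow$(1) by direct averaging, you work with the second-moment matrix and symmetrize at the end instead of restricting to symmetric measures, and instead of reducing to finite $S$ and invoking von Neumann's theorem you apply a minimax theorem needing compactness on one side only (Ky Fan/Sion) and recover attainment via tightness and Prokhorov, which spells out the ``standard compactness argument'' the paper leaves implicit.
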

\begin{proof}
By a standard compactness argument, it is enough to treat the case where $S$ is finite. The lemma then follows from the von Neumann minimax theorem, in a similar fashion as \cite[Theorem 1.2]{Dad19}. 
Let $\mathcal{Q}_\tau$ be the set of symmetric positive semidefinite $n$-by-$n$ matrices with trace equal to $\tau$, and let $\Delta_S$ be the simplex of symmetric probability measures on $S$. These are compact convex sets and any $\mu\in \Delta_S$ has average $0$. The minimax principle applied to the affine function 
$$f:\mathcal{Q}_\tau\times \Delta_S$$
$$f(Q,\mu) := \Tr(Q\Cov(\mu))$$
gives 
$$\max_{Q\in \mathcal{Q}_\tau} \min_{\mu \in \Delta_S} \Tr(Q\Cov(\mu)) = \min_{\mu \in \Delta_S} \max_{Q\in \mathcal{Q}_\tau} \Tr(Q\Cov(\mu)).$$
But $\max_{Q\in \mathcal{Q}_\tau} \Tr(Q\Cov(\mu)) = \tau \|\Cov(\mu)\|$, and 
$\min_{\mu \in \Delta_S} \Tr(Q\Cov(\mu)) = \min_{s \in S} \Tr(Q ss^T) =  \min_{s \in S}  s^TQ s$, so 
$$\max_{Q\in \mathcal{Q}_\tau} \min_{s \in S}  s^TQ s= \min_{\mu \in \Delta_S} \tau \|\Cov(\mu)\|.$$
From this equality, the equivalence is clear.

\end{proof}

Recall the notation $A_{(q)}$ for the $q$-fold Minkowski sum of $A$, defined in (\ref{sum notation}).
\begin{proof}[Proof of Corollary \ref{cor:large ellipsoid}]
An ellipsoid $E=\{x^TQx \leq 1\}$ with trace $\Tr E = \Tr Q = \frac{1}{10}$ has Gaussian measure $\gamma_n(E)=\mathbb{P}[\sum_{i=1}^n \lambda_i Z_i^2\leq 1] = \mathbb{P}[\sum_{i=1}^n 10\lambda_i Z_i^2\leq 10] $ where the $\lambda_i$'s are the eigenvalues of $Q$ and the $Z_i$'s are independent standard Gaussian variables. Since $\sum_{i=1}^n 10\lambda_i =1$, \cite[Corollary 3]{Sze03} implies that 
$$\gamma_n(E)\geq \mathbb{P}[Z_1^2\leq 10]\geq \frac{1}{2}$$
where the second inequality follows from well-known estimates for the Gaussian measure.
Moreover by Lemmas \ref{Steinhaus} and \ref{cor:vol sum}, if a set $A$ satisfies $\gamma_n(A)\geq \frac{2}{3}$ then $\gamma_n(A') \geq \frac{2}{3}$ where $A':=A_{(k_0)}\cap (-A_{(k_0)})$ and $k_0>0$ is a universal integer. The point here is that $A'$  is symmetric.

In view of these preliminary remarks, in order to prove the corollary, it is sufficient to show that for some universal integer $q_0>0$, for a symmetric set $A\subset \R^n$ with $\gamma_n(A)\geq \frac{2}{3}$, 
there is an ellipsoid $E$ in $\R^n$ with  
\begin{equation}\label{tre110}
\Tr E = \frac{1}{10} \quad \text{and}\quad   \sqrt{\frac{\log n}{n}} E\subset  A_{(q_0)}.
\end{equation}
Let $q>0$ be the integer given in Theorem \ref{thm:general}. 
By Lemmas \ref{Steinhaus} and \ref{cor:vol sum}, for some universal $k_1\geq 2$, 
\begin{equation}\label{k_1bis}
\gamma_n(A_{(k_1)}) >1-\frac{1}{10q}.
\end{equation}
Suppose towards a contradiction that (\ref{tre110}) fails for $k_0 = 10k_1q+2$. 
Then for any ellipsoid $E$ with $\Tr E=\frac{1}{10}$, $E\cap \R^n\setminus \sqrt{\frac{n}{\log n}}A_{(10k_1q+2)} \neq \varnothing$. By Lemma \ref{cor:ellcov}, there is a random vector $X_0$ in $\R^n$ whose support $\supp X_0$ is contained in the closure of $\R^n\setminus  \sqrt{\frac{n}{\log n}} A_{(10k_1q+2)}$,
such that $X_0\sim -X_0$ and $\|\Cov(X_0) \|\leq 10$. 
Note that by Lemma \ref{Steinhaus}, $A_{(10k_1q)}$ is strictly contained in the interior of $A_{(10k_1q+2)}$, meaning that 
\begin{equation}\label{xa(q)}
\sqrt{\frac{\log n}{n}}  \supp X_0 \subset \R^n\setminus  A_{(10k_1q)}.
\end{equation} 
Set 
$$X_1:=  \mathbf{1}_{\{\|X_0\|^2 \leq 100n\}} X_0$$
which satisfies $X_1\sim -X_1$ like $X_0$. In particular $X_1$ is centered.
By the covariance bound for $X_0$ and Markov's inequality, $X_1=X_0$ with probability at least $\frac{1}{2}$. Besides, $\|\frac{1}{10}\sqrt{\frac{\log n}{n}} X_1\|\leq \sqrt{\log n}$ almost surely, and $\|\Cov(\frac{1}{10}\sqrt{\frac{\log n}{n}} X_1)\|\leq \frac{\log n}{n}$, so applying Theorem \ref{thm:general} to the centered  random vector $\frac{1}{10}\sqrt{\frac{\log n}{n}} X_1$, 
\begin{equation}\label{lognnq}
\sqrt{\frac{\log n}{n}}X_1=10(G_1+...+G_q)
\end{equation} 
for some standard Gaussian random vectors $G_i$. 
By (\ref{k_1bis}) and the union bound, we deduce that $G_i\in A_{(k_1)}$ for all $i\in\{1,...,q\}$ and $X_1=X_0$ at the same time with  positive probability,
 so in particular by (\ref{lognnq}), there are points $a_1,...,a_q\in A_{(k_1)}$ and $x\in \supp X_0$,  with 
$$\sqrt{\frac{\log n}{n}}x = 10(a_1+...+a_q) \in 10A_{(k_1q)}\subset A_{(10 k_1q)}.$$
But this contradicts (\ref{xa(q)}).
This finishes the proof.

\end{proof}

\subsection{Further discussions}

\subsubsection{Largest slices} \label{rem:deux}
One can show that Corollary \ref{cor:large ellipsoid} and the standard  Lemma \ref{cor:vol sum} imply an optimal bound for the largest sections of sums of large sets: there exists an integer $q>0$ such that 
for any $\theta\in (0,1)$,
 if $A$ is a closed set in $\R^n$ with $\gamma_n(A)\geq \frac{2}{3}$, then there is a linear  subspace $H$ with $\dim H = \lfloor \theta n \rfloor$ and  $c=c(\theta)>0$ such that
\begin{equation}\label{bdomega}
 \forall k\geq q,\quad \gamma_{H}\big(A_{(k)}\cap H\big)\geq 1-n^{-c k^2}.
 \end{equation}
Up to the constant $c>0$, this bound  is optimal:  consider  the Gaussian measure of sections of centered hypercubes \cite[Theorem 10]{Bar05}. 
This remark also provides an indirect way to verify that the factor $\sqrt{{n}^{-1}{\log n}}$ in Corollary \ref{cor:large ellipsoid} is sharp up to uniform constants already for centered hypercubes
(if $\sqrt{{n}^{-1}{\log n}}$  could be replaced  by a much larger constant, then it would lead to a much better bound than (\ref{bdomega}), a contradiction with \cite[Theorem 10]{Bar05}).


\subsubsection{From convex bodies to ellipsoids} \label{rem:trois}
If Problem \ref{T} has a positive answer, then Corollary \ref{cor:large ellipsoid} would immediately follow from the beautiful and highly nontrivial result \cite[Theorem 2.11.9]{Tal21}. That result implies that given a convex body $K$ in $\R^n$ with $\gamma_n(K)\geq \frac{1}{2}$, there is a sequence of halfspaces $H_k=\{y\in \R^n; \langle x_k,y\rangle \geq 1\}$ such that for some universal $c_0\geq 1$,
$$\R^n\setminus c_0K \subset \bigcup_{k\geq 1}H_k \quad \text{and}\quad \|x_k\| \leq \frac{1}{\sqrt{\log (k+1)}}.$$

\section{Appendix}

\begin{proof}[Proof of Lemma \ref{corrrect}]
The proof argument closely follows \cite[Lemma 2.11]{Tal95}. 
Clearly, it is enough to show the statement when $n$ is an even integer, which we will assume below.
We want to estimate the sum over $i=1,...,n$ of 
\begin{align}\label{exi2}
\E[|X^*_i - r_i|^2] = 2\int_0^\infty \mathbb{P}[|X^*_i -r_i|>t]\, t\, dt.
\end{align} By symmetry of the question, it is enough to estimate the sum over $i=1,...,n/2$.
Fix $i\in\{1,...,n/2\}$ and $t>0$. Under these assumptions, $r_i\leq 0$.
Set $Z_{k}:= \mathbf{1}_{\{X_k < r_i-t\}}$. 
These random variables are independent. 
Note that $X^*_i-r_i < -t$ if and only if the number of elements in $\{k;\, X_k < r_i-t\}$ is at least $i$, namely
\begin{align}\label{ppz_k}
\mathbb{P}[X^*_i -r_i<-t]  = \mathbb{P}[\sum_{k=1}^n Z_{k} \geq i].\end{align}
Set $a_k:=\mathbb{P}[Z_{k}=1]$, and $a:= \sum_{k=1}^n a_k$. By our assumption on $\theta_i$ and the definition of $r_i$, 
$$a=n\Phi(r_i-t)  \in (0,i).$$
Write 
\begin{align}\label{pz_k}
\begin{split}
\mathbb{P}[\sum_{k=1}^n Z_{k} \geq i] & = \mathbb{P}[\sum_{k=1}^n (Z_{k} -a_k) \geq i-a]\\
&\leq \inf_{\lambda\geq 0} e^{-\lambda(i-a)} \Pi_{k=1}^n \E[e^{\lambda(Z_{k}-a_k)}].
\end{split}
\end{align} 
Next, for any $\lambda\geq 0$,
$$\E[e^{\lambda(Z_{k}-a_k)}] = a_ke^{\lambda (1-a_k)} +(1-a_k) e^{-a_k \lambda}\leq e^{a_k \lambda^2 e^\lambda}$$
(this can be checked by showing that for all $x\geq 0$ and $\lambda\geq 0$, $x e^\lambda +(1-x) \leq e^{x\lambda +x\lambda^2e^\lambda}$, which in turn can be proved by comparing the derivatives). Hence, by (\ref{pz_k}), we obtain
$$\mathbb{P}[\sum_{k=1}^n Z_{k} \geq i] \leq \inf_{\lambda\geq 0} 
e^{-\lambda(i-a) +a \lambda^2e^\lambda}.$$
If  $a\in [\frac{i}{7},i)$, we choose $\lambda=\frac{i-a}{6a}$ and get
\begin{align}\label{a>}
\mathbb{P}[\sum_{k=1}^n Z_{k} \geq i] \leq e^{-(i-a)^2/12a}
\end{align}
where we used $\lambda = \frac{i-a}{6a}\leq 1$ and $e^\lambda\leq 3$.\newline 
If  $a\in (0, \frac{i}{7})$, we choose $\lambda=\frac{1}{2}\log(\frac{i-a}{2a})$ and get
\begin{align}\label{a<}
\begin{split}
\mathbb{P}[\sum_{k=1}^n Z_{k} \geq i] & \leq 
e^{-\frac{(i-a)}{4}\log(\frac{i-a}{2a})}
\\
& = (\frac{2a}{i-a})^{\frac{i-a}{4}}\leq(\frac{2a}{i-a})^{\frac{i}{8}}  \leq (\frac{4a}{i})^{\frac{i}{8}}
\end{split}
\end{align}
where in the first line we used that $\lambda^2 e^\lambda \leq \lambda e^{2\lambda}$ so that $e^{-\lambda(i-a) +a \lambda^2e^\lambda} \leq e^{\lambda[-(i-a) + a e^{2\lambda}]}$, and in the second line we used that $\frac{2a}{i-a}\leq 1$, and $\frac{i-a}{4} \geq \frac{i}{8}$, and $\frac{2a}{i-a}\leq \frac{4a}{i}$. \newline
Set 
$$\xi_i(a):= e^{-\frac{(i-a)^2}{12a}} \quad \text{ if $a\in [\frac{i}{7},i)$},$$
$$\xi_i(a):= (\frac{4a}{i})^{\frac{i}{8}} \quad  \text{ if $a\in (0, \frac{i}{7})$}.$$
Combining (\ref{ppz_k}), (\ref{a>}) and (\ref{a<}), we obtain
\begin{align}\label{integral}
\int_0^\infty \mathbb{P}[X^*_i -r_i < -t]\, t\, dt \leq \int_0^\infty \xi_i(n\Phi(r_i-t)) \, t dt.
\end{align}

The estimate of the integral (\ref{integral}) is where there is an issue with the proof of \cite[Lemma 2.11]{Tal95}.
We will use the following basic inequalities:
\begin{align}\label{mills}
\text{for any $x\leq 0$,}\quad  \frac{|x|^2}{1+|x|^2}\Phi'(x)  \leq |x|\Phi(x) \leq \Phi'(x)  \quad (\text{Mills ratio inequalities})
\end{align}
\begin{align}\label{phibasic}
\text{for any $x\leq 0$, $t>0$,}\quad \Phi(x-t) \leq e^{-c(1+|x|)t}\Phi(x)\quad \text{where $c>0$ is universal.}
\end{align}
To check (\ref{phibasic}),  recall that the Gaussian cumulative distribution function $\Phi$ is log-concave, so $\frac{\Phi'(u)}{\Phi(u)}$ is non-increasing in $u$, and thus  $\frac{\Phi(x-t)}{\Phi(x)} =\exp(-\int_{x-t}^x \frac{\Phi'(u)}{\Phi(u)}du)\leq \exp(-t \frac{\Phi'(x)}{\Phi(x)}).$ 
Since  $\frac{\Phi'(x)}{\Phi(x)}\geq \frac{\Phi'(0)}{\Phi(0)} = \sqrt{\frac{2}{\pi}}$, and since we have $\frac{\Phi'(x)}{\Phi(x)} \geq |x|$ by the lower bound in the Mills ratio inequalities, we further deduce that $\frac{\Phi'(x)}{\Phi(x)}\geq c(1+|x|)$ for some universal $c>0$, which gives (\ref{phibasic}).
Next, recall that since $i\leq n/2$, we have $r_i\leq 0$. Besides, by the Mills ratio inequalities (\ref{mills}), one checks that
\begin{align}\label{r_iestimate}
(1+|r_i|)^2 \geq \frac{1}{10} (\log\frac{n}{i}+1).
\end{align}
By (\ref{phibasic}) and the definition of $r_i$, 
\begin{align}\label{phi--}
n\Phi(r_i-t)\leq i e^{-c(1+|r_i|)t} .
\end{align}
Define $t_i>0$ to be the unique number such that $n\Phi(r_i-t_i) =\frac{i}{7}$. In particular, $0< t_i\leq \frac{\log 7}{c}$ by (\ref{phi--}), and 
\begin{align}\label{mphi><}
\begin{split}
n\Phi(r_i-t) \in [\frac{i}{7},i)\quad \text{if $t\in (0,t_i]$,}\\
 n\Phi(r_i-t) \in (0,\frac{i}{7}) \quad \text{if $t\in (t_i,\infty)$}.
\end{split}
\end{align}
Besides, note that both functions $a\mapsto e^{-\frac{(i-a)^2}{12a}}$ and $a\mapsto (\frac{4a}{i})^{\frac{i}{8}}$ are non-decreasing in $a$. So by (\ref{mphi><}), (\ref{phi--}) and (\ref{phibasic}),
\begin{align}\label{xi_i><}
\begin{split}
 \text{if $t\in (0,t_i]$,}\quad \xi_i(n\Phi(r_i-t)) & = \exp({-\frac{(i-n\Phi(r_i-t))^2}{12n\Phi(r_i-t)}}) \leq \exp({-\frac{i^2 (1-e^{-c(1+|r_i|)t})^2}{12 i e^{-c(1+|r_i|)t}}})  \\
& \leq \exp({- \frac{i (1-e^{-c(1+|r_i|)t})^2}{12}}) \leq \exp({- \frac{i c^2(1+|r_i|)^2t^2}{12}}),
\\
 \text{if $t\in (t_i,\infty)$,} \quad \xi_i(n\Phi(r_i-t)) & = (\frac{4n\Phi(r_i-t)}{i})^{\frac{i}{8}} = (\frac{4n\Phi(r_i-t_i - (t-t_i))}{i})^{\frac{i}{8}}
 \\
& \leq (\frac{4n\Phi(r_i-t_i)e^{-c(1+|r_i|)(t-t_i)}}{i})^{\frac{i}{8}} \leq  
(\frac{4}{7}\exp[{-c(t-t_i)}])^{\frac{i}{8}}.
\end{split}
\end{align}
Thanks to (\ref{xi_i><}) and since $t_i\leq \frac{\log 7}{c}$, we can now estimate the integral (\ref{integral}):
\begin{align}\label{estimate integral}
\begin{split}
\int_0^\infty \xi_i(n\Phi(r_i-t)) \, t dt & = \int_0^{t_i} \xi_i(n\Phi(r_i-t)) \, t dt + \int_{t_i}^\infty \xi_i(n\Phi(r_i-t)) \, t dt  \\
& \leq \int_0^{t_i} \exp({- \frac{i c^2(1+|r_i|)^2t^2}{12}}) \, t dt  +\int_{t_i}^\infty (\frac{4}{7}\exp[{-c(t-t_i)}])^{\frac{i}{8}}\, t dt \\
& = \frac{6}{ic^2(1+|r_i|)^2}(1-\exp[-\frac{ic^2(1+|r_i|)^2t_i^2}{12}]) +\int_{t_i}^\infty
(\frac{4}{7})^{\frac{i}{8}} \exp[-\frac{ci}{8}(t-t_i)]
 \, t dt\\
& \leq   \frac{6}{ic^2(1+|r_i|)^2} + (\frac{4}{7})^{\frac{i}{8}} 
\int_{0}^\infty \exp({-\frac{c}{8}s})\, (s+\frac{\log 7}{c}) ds.
\end{split}
\end{align}
Finally we can bound (\ref{exi2}): summing the integrals (\ref{xi_i><}) over all $i=1,...,\frac{n}{2}$, we obtain using (\ref{estimate integral}) that for some universal constants $C', C''>0$, 
\begin{align*}
\begin{split}
\sum_{i=1}^{n/2} \E[|X^*_i - r_i|^2]
& = 2 \sum_{i=1}^{n/2} \int_0^\infty \mathbb{P}[X^*_i -r_i < -t]\, t\, dt \\
& \leq 2 \sum_{i=1}^{n/2} \int_0^\infty \xi_i(n\Phi(r_i-t)) \, t dt \\
&\leq C'\sum_{i=1}^{n/2}\frac{1}{i (1+|r_i|)^2} +C'\\ 
& \leq  C'\sum_{i=1}^{n/2}\frac{10}{i (\log\frac{n}{i}+1)} +C' \quad \quad \text{(use (\ref{r_iestimate})}\\
& \leq C''(\log \log n +1).
\end{split}
\end{align*}
 This is enough to conclude the proof.
\end{proof}


\bibliographystyle{alpha}
\bibliography{biblio_26_01_13}

\end{document}